\def\defaultfontcolor{}
\numberwithin{theorem}{section}
\numberwithin{equation}{section}
\renewcommand{\paragraph}[1]{\subsubsection{#1}}
\renewcommand{\cases}[1]{\left\{ \begin{array}{rl} #1 \end{array} \right.}
\newcommand{\smfrac}[2]{{\textstyle \frac{#1}{#2}}}
\def\Xint#1{\mathchoice
{\XXint\displaystyle\textstyle{#1}}%
{\XXint\textstyle\scriptstyle{#1}}%
{\XXint\scriptstyle\scriptscriptstyle{#1}}%
{\XXint\scriptscriptstyle\scriptscriptstyle{#1}}%
\!\int}
\def\XXint#1#2#3{{\setbox0=\hbox{$#1{#2#3}{\int}$ }
\vcenter{\hbox{$#2#3$ }}\kern-.6\wd0}}
\def\mint{\Xint-}
\renewcommand\b{\big}
\newcommand\B{\Big}
\newcommand\bg{\bigg}
\newcommand\Bg{\Bigg}
\newcommand\sep{\,|\,}
\newcommand\bsep{\,\b|\,}
\newcommand\diam{{\rm diam}}
\newcommand\supp{{\rm supp}}
\newcommand\R{\mathbb{R}}
\newcommand\N{\mathbb{N}}
\newcommand\Z{\mathbb{Z}}
\newcommand\C{\mathbb{C}}
\newcommand\dx{\,{\rm d}x}
\newcommand\dt{\,{\rm d}t}
\newcommand\ds{\,{\rm d}s}
\newcommand\dk{\, {\rm d}k}
\newcommand{\<}{\langle}
\renewcommand{\>}{\rangle}
\newcommand\ul{\underline}
\newcommand\mA{{\sf A}}
\newcommand\mB{{\sf B}}
\newcommand\mF{{\sf F}}
\newcommand\mG{{\sf G}}
\newcommand\mI{{\sf Id}}
\newcommand\bfg{{\bm g}}
\newcommand\bfrho{{\bm \rho}}
\newcommand\bftau{{\bm \tau}}
\newcommand\bfh{{\bm h}}
\newcommand\bfO{{\bm 0}}
\newcommand\bbC{\mathbb{C}}
\newcommand\D{\nabla}
\newcommand\del{\delta}
\newcommand\ddel{\delta^2}
\renewcommand\a{{\rm a}}
\renewcommand\c{{\rm c}}
\newcommand\ac{{\rm ac}}
\renewcommand\i{{\rm i}}
\renewcommand\L{\Lambda}
\newcommand\Us{\dot{\mathscr{W}}}
\newcommand\Usz{\Us^{\rm c}}
\newcommand\UsH{\Us^{1,2}}
\newcommand\UsHd{\Us^{-1,2}}
\newcommand\E{\mathscr{E}}
\newcommand\Equad{\mathscr{F}}
\newcommand\Ea{\E^\a}
\newcommand\Rg{\mathcal{R}}
\newcommand\Nhd{\mathcal{N}}
\newcommand\vsig{\varsigma}
\newcommand\rcut{r_{\rm cut}}
\newcommand\ua{\bar{u}}
\newcommand\per{{\rm per}}
\newcommand\lin{{\rm lin}}
\newcommand\ulin{u^\lin}
\newcommand\uac{\bar{u}^\ac}
\newcommand\T{\mathcal{T}}
\newcommand\Br{\mathcal{B}}
\newcommand\Gr{\mathcal{G}}
\newcommand\GrC{G}
\newcommand\Ftd{\mathcal{F}_{\rm d}}
\newcommand\Ft{\mathcal{F}}
\newcommand{\Rcore}{{R_{\rm def}}}
\newcommand{\burg}{{\sf b}}
\newcommand\up{u_0}
\newcommand{\halfspace}{\Omega_\Gamma}
\newcommand{\rdisl}{\hat{r}}
\newcommand{\Rdisl}{\rdisl}
\newcommand{\Adm}{\mathscr{A}}
\newcommand{\Del}{\tilde{D}}
\newcommand{\Rghom}{{\tt \char`\\Rghom}}
\newcommand{\pPt}{{\tt \char`\\pPt}}
\newcommand{\pDis}{{\tt \char`\\pDis}}
\newcommand{\Nhdu}{{\tt \char`\\Nhdu}}
\newcommand{\asPotentialHom}{{\tt \char`\\asPotentialHom}}
\newcommand{\asPotentialSym}{{\tt \char`\\asPotentialSym}}
\newcommand{\asCore}{{\tt \char`\\asCore}}
\newcommand{\asPotential}{{\tt \char`\\asPotential}}
\newcommand{\rhocut}{{\tt \char`\\rhocut}}
\newcommand{\halfplane}{{\tt \char`\\halfplane}}
\newcommand{\balpha}{\alpha}
\definecolor{alertcol}{rgb}{0.7, 0, 0}
\definecolor{vecol}{rgb}{0.7, 0, 0.7}
\definecolor{ascol}{rgb}{0, 0, 0.7}
\definecolor{todocol}{rgb}{0.0, 0.4, 0.0}
\definecolor{cocol}{rgb}{0.6, 0.0, 0.8}
\newcommand{\alert}[1]{#1}
\newcommand{\marg}[1]{}
\begin{document}

\defaultfontcolor

\title[Analysis of Crystal Defect Atomistic
Simulations]{Analysis of Boundary Conditions \\ for Crystal Defect Atomistic
  Simulations}

\author{V. Ehrlacher}
\address{CERMICS - ENPC \\ 6 et 8 avenue Blaise Pascal  \\ Cit\'{e} Descartes
  - Champs sur Marne  \\ 77455 Marne la Vallée Cedex 2 \\ FRANCE}
\email{ehrlachv@cermics.enpc.fr}

\author{C. Ortner}
\address{C. Ortner\\ Mathematics Institute \\ Zeeman Building \\ University of
  Warwick \\ Coventry CV4 7AL \\ UK}
\email{christoph.ortner@warwick.ac.uk}

\author{A. V. Shapeev} \address{206 Church St. SE \\ School of Mathematics \\
  University of Minnesota \\ Minneapolis \\ MN 55455 \\ USA}
\email{alexander@shapeev.com}

\date{\today}

\thanks{ %
  Some of the ideas leading to this work were developed during the
  IPAM semester programme ``Materials Defects: Mathematics,
  Computation, and Engineering''. CO's work was supported by EPSRC
  grants EP/H003096, EP/J021377/1, ERC Starting Grant 335120 and by
  the Leverhulme Trust through a Philip Leverhulme Prize. AS was
  supported by AFOSR Award FA9550-12-1-0187.}

\subjclass[2000]{65L20, 65L70, 70C20, 74G40, 74G65}

\keywords{ %
  crystal lattices, defects, artificial boundary conditions, error estimates,
  convergence rates}

\begin{abstract}
  Numerical simulations of crystal defects are necessarily restricted to finite
  computational domains, supplying artificial boundary conditions that emulate
  the effect of embedding the defect in an effectively infinite crystalline
  environment. This work develops a rigorous framework within which the accuracy
  of different types of boundary conditions can be precisely assessed.

  We formulate the equilibration of crystal defects as variational problems in a
  discrete energy space and establish qualitatively sharp regularity estimates
  for minimisers. Using this foundation we then present rigorous error estimates
  for (i) a truncation method (Dirichlet boundary conditions), (ii) periodic
  boundary conditions, (iii) boundary conditions from linear elasticity, and
  (iv) boundary conditions from nonlinear elasticity. Numerical results confirm
  the sharpness of the analysis.
  %
\end{abstract}

\maketitle


\section{Introduction}
\label{sec:intro}
Crystalline solids can contain many types of defects. Two of the most
important classes are dislocations, which give rise to plastic flow, and point
defects, which can affect both elastic and plastic material behaviour as well
as brittleness.

Determining defect geometries and defect energies are a key problem of
computational materials science \cite[Ch. 6]{HandbookMaterialsModelling}.
Defects generally distort the host lattice, thus generating long-ranging elastic
fields. Since practical schemes necessarily work in small computational domains
(e.g., ``supercells'') they cannot explicitly resolve these fields but must
employ {\em artificial boundary conditions} (periodic boundary conditions appear
to be the most common). To assess the accuracy and in particular the cell size
effects of such simulations, numerous formal results, numerical explorations, or
results for linearised problems can be found in the literature; see
e.g. \cite{Balluffi,Hine2009,MakovPayne1995,CaiBulChaLiYip2003} and references
therein for a small representative sample.

The novelty of the present work is that we rigorously establish explicit
convergence rates in terms of computational cell size, taking into account the
long-ranged elastic fields. Our framework encompasses both point defects and
straight dislocation lines. Related results in a PDE context have recently been
developed in \cite{BlancLeBrisLions:defects}.


The second motivation for our work is the analysis of multiscale
methods. Several multiscale methods have been proposed to accelerate crystal
defect computations (for example atomistic/continuum coupling
\cite{Ortiz:1995a}, \cite{LuOr:acta} or QM/MM
\cite{BernsteinKermodeCsanyi:2009}), and our framework provides a natural set of
benchmark problems and a comprehensive analytical substructure for these methods
to assess their relative accuracy and efficiency. In particular, it provides a
machinery for the optimisation of the (non-trivial) set of approximation
parameters in multiscale schemes.

The mathematical analysis of crystalline defects has traditionally focused on
dislocations \cite{HirthLothe, ArizaOrtiz:2005, ADLGP13, HudsonOrtner:disloc}
and on electronic structure models \cite{CattoLeBrisLions,
  CancesDeleurenceLewin:2008}; however, see \cite{CancesLeBris:preprint} for a
comprehensive recent review focused on point defects. The results in the present
work, in particular the decay estimates on elastic fields, also have a bearing
on this literature since they can be used to establish finer information about
equilibrium configurations; see e.g., \cite{2014-dislift}.

\subsection*{Acknowledgement}
We thank Brian Van Koten who pointed out a substantial flaw in our construction
of the edge dislocation predictor in an earlier version 
of this work, and made valuable comments that helped us resolve it.

\subsection{Outline}
\quad Our approach consists in placing the defect in an infinite crystalline
environment, for simplicity say $\Z^d$, where $d \in \{2, 3\}$ is the space
dimension, applying a far-field boundary condition which encodes the macroscopic
state of the system within which the defect is embedded. Let $w : \Z^d \to \R^m$
be the unknown displacement of the crystal, then we decompose $w = \up + u$,
where $\up$ is a {\em predictor} that specifies the boundary condition through
the requirement that the {\em corrector} $u$ belongs to a discrete energy space
$\UsH$ (a canonical discrete variant of $\dot{H}^1(\R^d)$). We then formulate
the condition for $w$ to be an equilibrium configuration as a (local) energy
minimisation problem,
\begin{equation}
  \label{eq:intro:min}
  \ua \in \arg\min \b\{ \E(u) \bsep u \in \UsH \b\},
\end{equation}
where $\E(u)$ is the energy difference between the total displacement
$w = \up +u$ and the {\em predictor} $\up$.

The choice of $\up$ is not arbitrary. It is crucial that $\up$ is an
``approximate equilibrium'' in the far-field, which will be expressed through
the requirement that $\del\E(0) \in (\UsH)^*$. It is clear that, if this
condition fails, then $\inf \{\E(u) \sep u \in \UsH \} = -\infty$.  For this
reason, we think of $\up$ as a {\em predictor} and $u$ as a {\em corrector}.
For the case of dislocations, the choice of $\up$ is non-trivial, as the
``naive'' linear elasticity predictor does not take lattice symmetries correctly
into account.  In \S~\ref{sec:dis:atm} we present a new construction that
remedies this issue.

We shall not be concerned with existence of solutions to \eqref{eq:intro:min};
even for the simplest classes of defects this is a difficult problem. Uniqueness
can never be expected for realistic interatomic potentials.

However, assuming that a solution to \eqref{eq:intro:min} does exist, we may
then analyze its ``regularity''. More precisely, under a natural stability
assumption we estimate the rate in terms of distance to the defect core at which
$\ua$ (and its discrete gradients of arbitrary order) approach zero. For
  example, we will prove that
\begin{displaymath}
  |D\ua(\ell)| \leq C |\ell|^{-d} (\log |\ell|)^r,
\end{displaymath}
where $Du(\ell)$ is a finite difference gradient centered at $\ell \in \Z^d$,
$r = 0$ for point defects and $r = 1$ for straight dislocation lines.

These regularity estimates then allow us to establish various approximation
results. For example, we can estimate the error committed by projecting an
infinite lattice displacement field $u$ to a finite domain by truncation. This
motivates the formulation of a Galerkin-type approximation scheme for
\eqref{eq:intro:min} (see \S~\ref{sec:pt:clamped} and \S~\ref{sec:dis:clamped})
\begin{align}
  \label{eq:intro:galerkin}
  \ua_N &\in \arg\min \b\{ \E(u) \bsep u \in \UsH_N \b\}, \\
  & \notag
  \text{where} \quad \UsH_N := \b\{ u \in \UsH \bsep u(\ell) = 0 \text{
    for } |\ell| \geq N^{1/d} \b\}.
\end{align}
This is a finite dimensional optimisation problem with
${\rm dim}(\UsH_N) \approx N$, and our framework yields a straightforward proof
of the following error estimate: {\it suppose $\ua$ is a strongly stable
  (cf. \eqref{eq:pt:strongstabeq}) solution to \eqref{eq:intro:min} then, for
  $N$ sufficiently large, there exists a solution $\ua_N$ to
  \eqref{eq:intro:galerkin} such that
  \begin{displaymath}
    \| \ua - \ua_N \|_{\UsH} \leq C N^{-1/2} (\log N)^r,
  \end{displaymath}
  where $r = 0$ for point defects, $r = 1/2$ for straight dislocation lines, and
  $\| \ua - \ua_N \|_{\UsH}$ is a natural discrete energy norm.} Note that $N$
is directly proportional to the (idealised) computational cost of solving
\eqref{eq:intro:galerkin}. We stress that we stated only that ``there exists a
$\ua_N$''; indeed, both \eqref{eq:intro:min} and \eqref{eq:intro:galerkin}
typically have many solutions. Roughly speaking, this means that ``near every
stable solution to \eqref{eq:intro:min} there exists a stable solution to
\eqref{eq:intro:galerkin}''. It is interesting to note that the rate $N^{-1/2}$
is generic; that is, it is independent of any details of the particular
defect. We prove a similar error estimate for periodic boundary conditions in
\S~\ref{sec:pt:per}.

In \S\S~\ref{sec:pt:lin}, \ref{sec:pt:ac}, \ref{sec:dis:lin}, \ref{sec:dis:ac}
we then consider two types of {\em concurrent (or, self-consistent) boundary
  conditions} that use elasticity models to improve the far field corrector. In
these approximate models, we solve a far-field problem concurrently with the
atomistic core problem in order to improve the boundary conditions placed on the
atomistic core.  First, in \S~\ref{sec:pt:lin}, \ref{sec:dis:lin} we use
linearised lattice elasticity to construct an improved far-field predictor.
Second, in \S~\ref{sec:pt:ac}, \ref{sec:dis:ac} we analyze the effect of using
nonlinear continuum elasticity to improve the far-field boundary condition. This
effectively leads us to formulate an atomistic-to-continuum coupling scheme
within our framework. For both methods we show that, in the point defect case
this yields substantial improvements over the simple truncation method, but
surprisingly, for dislocations the methods are qualitatively comparable to the
simple truncation scheme. We note, however that based on the benchmarks of the
present paper, improved a/c schemes with superior convergence rates have
recently been developed in \cite{LiOrtnerShapeevEtAl-blended,OrtnerZhang2014}.


Our numerical experiments in \S~\ref{sec:pt:num}, \ref{sec:dis:num}
mostly confirm that our analytical predictions are sharp, however, we
also show some cases where they do not capture the full complexity of
the convergence behaviour.


\subsubsection*{Restrictions}
Our analysis in the present paper is restricted to static equilibria
under classical interatomic interaction with finite interaction
range. We see no obstacle to include Lennard-Jones type interactions,
but this would require finer estimates and a more complex notation.
However, we explicitly exclude Coulomb interactions or any electronic
structure model and hence also charged defects (see, e.g.,
\cite{Hine2009,MakovPayne1995,CattoLeBrisLions,
  CancesDeleurenceLewin:2008, CancesLeBris:preprint}). Due to the
computational cost involved in these latter models, obtaining
analogous convergence results for these, would be of considerable
interest.

As reference atomistic structure we admit only single-species Bravais
lattices. Again, we see no conceptual obstacles to generalising to
multi-lattices, however, some of the technical details may require additional
work.

As already mentioned we only focus on ``compactly supported'' defects, but
exclude curved line defects, grain or phase boundaries, surfaces or
cracks. Moreover, we exclude the case of multiple or indeed infinitely many
defects.  We hope, however, that our new analytical results on single defects
will aid future studies of this setting; e.g., see \cite{2014-dislift} for an
analysis of multiple screw dislocations which is based on the present work.



\subsubsection*{Notation}
Notation is introduced throughout the article. Key symbols that are used
  across sections are listed in Appendix \ref{sec:symbols}. We only briefly
remark on some generic points. The symbol $\< \cdot, \cdot\>$ denotes an
abstract duality pairing between a Banach space and its dual. The symbol
$|\cdot|$ normally denotes the Euclidean or Frobenius norm, while $\|\cdot\|$
denotes an operator norm.

The constant $C$ is a generic positive constant that may change from one line of
an estimate to the next. When estimating rates of decay or convergence rates
then $C$ will always remain independent of approximation parameters (such as
$N$, which relates to domain size), of lattice position (such as $\ell$) or of
test functions. However, it may depend on the interatomic potential or some
fixed displacement or deformation field (e.g., on the boundary condition and the
solution). The dependencies of $C$ will normally be clear from the context, or
stated explicitly. To improve readability, we will sometimes replace
  $\leq C$ with $\lesssim$.

For a differentiable function $f$, $\nabla f$ denotes the Jacobi matrix and
$\nabla_r f = \nabla f \cdot r$ the directional derivative. The first and second
variations of a functional $E \in C^2(X)$ are denoted, respectively, by
$\< \del E(u), v \>$ and $\< \ddel E(u) w, v \>$ for $u, v, w \in X$. We will avoid use of higher variations in this explicit
way.

If $\L \subset \R^d$ is a discrete set and $u : \L \to \R^m$, $\ell \in \L$ and
$\ell+\rho \in \L$, then we define the finite difference
$D_\rho u(\ell) := u(\ell+\rho) - u(\ell)$. If $\Rg \subset \L - \ell$, then we
define $D_\Rg u(\ell) := (D_\rho u(\ell))_{\rho \in \Rg}$. We will normally
specify a specific stencil $\Rg_\ell$ associated with a site $\ell$ and define
$Du(\ell) := D_{\Rg_\ell} u(\ell)$.  For ${\bm \rho}\in(\Rg_\ell)^j$,
$D_{\bm \rho} u := D_{\rho_1}\ldots D_{\rho_j} u$ denotes a $j$-th order
derivative, and $D^j u$ defined recursively by $D^j u := D D^{j-1} u$ denotes
the $j$-th order collection of derivatives.

\section{Point Defects}
\label{sec:sum-pt}
\subsection{Atomistic Model}
\label{sec:pt:atm}
We formulate a model for a point defect embedded in a homogeneous lattice. To
simplify the presentation, we admit only a finite interaction radius (in
reference coordinates) and a smooth interatomic potential. Both are easily
  lifted, but introduce non-essential technical complications.

Let $d \in \{2, 3\}$ and $\mA \in \R^{d \times d}$ nonsingular, defining a
Bravais lattice $\mA \Z^d$. \label{sym:lattice} The {\em reference
  configuration} for the defect is a set $\L \subset \R^d$ such that, for
some \label{sym:Rcore} $\Rcore > 0$,
$\L \setminus B_{\Rcore} = \mA\Z^d \setminus B_{\Rcore}$ and
$\L \cap B_{\Rcore}$ is finite. For analytical purposes it is convenient to
assume the existence of a background mesh, that is, a regular
partition \label{sym:T-L} $\T_\L$ of $\R^d$ into triangles if $d = 2$ and
tetrahedra if $d = 3$ whose nodes are the reference sites $\L$, and which is
homogeneous in $\R^d \setminus B_{\Rcore}$. (If $T \in \T_\L$ and
$\rho \in \mA\Z^d$ with $T, \rho+T \subset \R^d \setminus B_{\Rcore}$, then
$\rho+T \in \T_\L$ as well.)
We refer to Figure \ref{fig:pt_defects_comp} for
two-dimensional examples of such triangulations.


\begin{figure}
  \begin{center}
    \includegraphics[height=5cm]{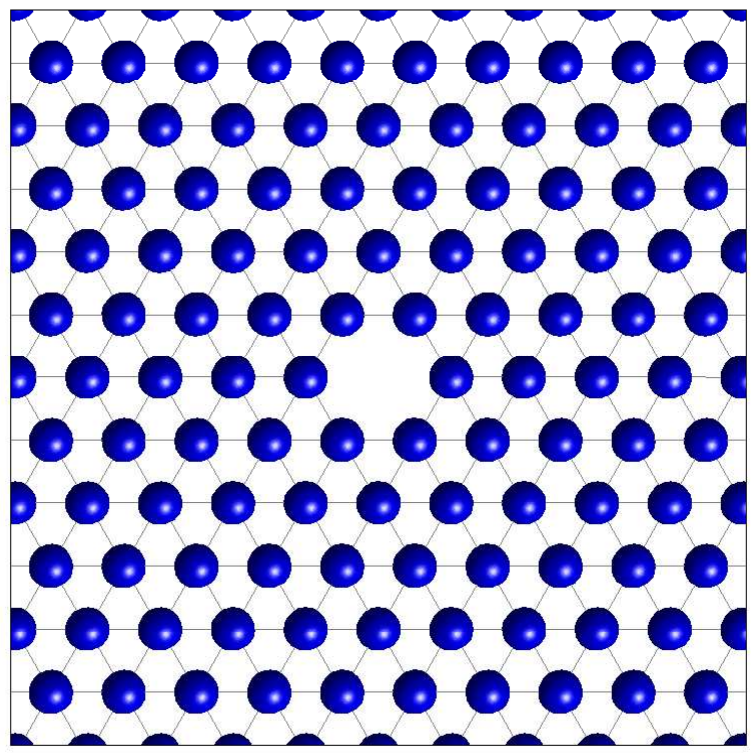}
    \qquad \includegraphics[height=5cm]{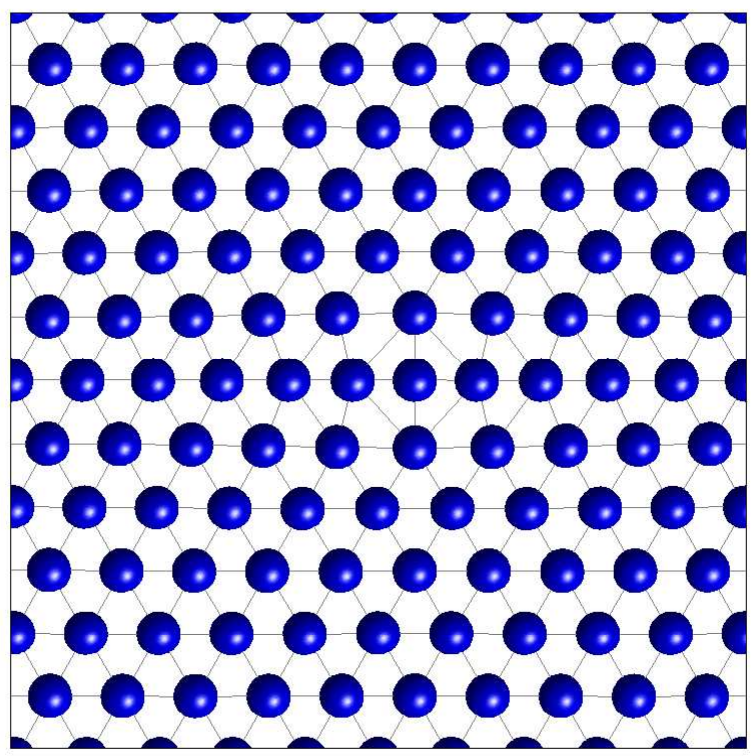} \\[1mm]
    \footnotesize (a) \hspace{5cm} (b)
  \end{center}
  \caption{\label{fig:pt_defects_comp} Illustration of (a) vacancy and
    (b) interstitial defects; relaxed configurations computed with
    ATM-DIR (cf. \S~\ref{sec:pt:num}). The grey lines indicate the
    interaction bonds, $\Rg_\ell$, between atoms, for a
    nearest-neighbour mode, as well as the auxiliary triangulation
    $\T_\L$.}
\end{figure}

For each $u : \L \to \R^m$ we denote its continuous and piecewise
affine interpolant with respect to $\T_\L$ by $Iu$. Identifying $u=Iu$ we can define the (piecewise constant) gradient $\D u
:= \D Iu : \R^d \to \R^{m \times d}$ and the spaces of compact and
finite-energy displacements, respectively, by 
\begin{equation}
  \label{eq:pt:defnUszUsH}
  \begin{split}
    \Usz &:= \b\{ u : \L \to \R^d \bsep {\rm supp}(\D u) \text{ is
    compact} \b\} \quad \text{and} \\
  \Us^{1,2} &:= \b\{ u : \L \to \R^d \bsep \D u \in L^2 \b\}.
\end{split}
\end{equation}
It is easy to see \cite{OrtnerTheil2012, OrShSu:2012} that $\Usz$ is
dense in $\Us^{1,2}$ in the sense that, if $u \in \Us^{1,2}$, then
there exist $u_j \in \Usz$ such that $\D u_j \to \D u$ strongly in
$L^2$.

Each atom $\ell \in \L$ may interact with a neighbourhood defined by the set of
lattice vectors \label{sym:Rg-rcut}
$\Rg_\ell \subset (B_{\rcut} \cap (\L-\ell)) \setminus \{0\}$, where
$\rcut > 0$, and we let $Du(\ell) := D_{\Rg_\ell} u(\ell)$. We assume without
loss of generality that
\begin{equation}\label{eq:T_Rg_conform}
	\text{if $(\ell, \ell+\rho)$ is an
		edge of $\T_\L$, then $\rho \in \Rg_\ell$.
	}
\end{equation}
This assumption implies, in particular, that $\| \D u \|_{L^2} \approx \|
  \,|Du|_p\, \|_{\ell^2}$ for any $p \in [1, \infty]$, where $|Du|_p(\ell) :=
  (\sum_{\rho \in \Rg_\ell} |D_\rho u(\ell)|^p)^{1/p}$.

\label{sym:V} For each $\ell \in \L$ let $V_\ell \in C^k( (\R^d)^{\Rg_\ell})$,
$k \geq 2$, be a smooth site energy potential satisfying $V_\ell(\bfO) = 0$ for
all $\ell \in \L$. (If $V(\bfO) \neq 0$, then it can be replaced with
$V_\ell(Du) \equiv V_\ell(Du) - V_\ell(\bfO)$; that it, $V$ should be understood
as a site energy difference.)  Then the energy functional for compact
displacements is given by \label{sym:E}
\begin{displaymath}
  \E(u) := \sum_{\ell \in \L} V_\ell(Du(\ell)) 
  \qquad \text{for $u \in \Usz$}.
\end{displaymath}
We assume throughout, that $V_\ell$ is homogeneous outside the defect
core, that is, $\Rg_\ell \equiv \Rg$ and $V_\ell \equiv V$ for all
$|\ell| \geq \Rcore$, and it is point symmetric,
\begin{equation}
  \label{eq:pt:ptsymm}
  -\Rg = \Rg \qquad \text{and} \qquad V\b( (-g_{-\rho})_{\rho \in \Rg}
  \b) = V( \bfg) \quad \forall \bfg \in (\R^m)^{\Rg}.
\end{equation}
Without loss of generality, we also assume that
\begin{equation}\label{eq:pt:nn}
\mA e_n\in\Rg,
\qquad n=1\ldots,d.
\end{equation}
Under these assumptions we can extend the definition of $\E$ to
$\Us^{1,2}$.

\begin{lemma}
  \label{th:extension_lemma}
  $\E : (\Usz, \|\D \cdot \|_{L^2}) \to \R$ is continuous. In
  particular, there exists a unique continuous extension of $\E$ to
  $\Us^{1,2}$, which we still denote by $\E$. The extended functional
  $\E : \Us^{1,2} \to \R$ is $k$ times continuously Fr\'echet
  differentiable.
\end{lemma}
\begin{proof}[Idea of the proof]
  For $u \in \Usz$, we may write
  \begin{displaymath}
    \E(u) = \sum_{\ell \in \L} \B( V_\ell(Du(\ell)) -
    V_\ell(0) - \b\< \del V_\ell(0), Du(\ell) \b\> \B) +
    \< \del \E(0), u \>.
  \end{displaymath}
  One now uses the fact that the summand in the first group scales
  quadratically, while $\del \E(0)$ is a bounded linear
  functional. The details are presented in
  \S~\ref{sec:prf_extension_pt}.
\end{proof}

\medskip

In view of Lemma \ref{th:extension_lemma} the atomistic variational problem is
``well-formulated'': we seek
\begin{equation}
  \label{eq:pt:atm}
  \ua \in \arg\min \b\{ \E(u) \bsep u \in \Us^{1,2} \b\},
\end{equation}
where $\arg\min$ denotes the set of local minimizers.

We are not concerned with the existence of solutions to
\eqref{eq:pt:atm}, but take the view that this is a property of the
lattice and the interatomic potential. We shall assume the existence
of a {\em strongly stable equilibrium} $\ua \in \Us^{1,2}$, by which
we mean that $\del\E(\ua) = 0$ and there exists $c_0 > 0$ such that
\begin{equation}
  \label{eq:pt:strongstabeq}
  \< \ddel \E(\ua) v, v \> \geq
  c_0 \| \D v \|_{L^2}^2 \quad \forall v \in \Usz.
\end{equation}
Since $\E \in C^k(\Us^{1,2})$ and $k \geq 2$ it is clear that a
strongly stable equilibrium is also a solution to \eqref{eq:pt:atm}
(but not vice-versa).

\begin{remark}\label{rem_stab}
  In \cite{2014-dislift}, \eqref{eq:pt:strongstabeq} is proven rigorously for an
  anti-plane screw dislocation, under restrictive assumptions on the interatomic
  potential. However, we cannot see how one might in general prove such a
  result. Nevertheless, in all numerical experiments that we have undertaken to
  date we do observe it {\it a posteriori}.
\end{remark}





\subsection{Regularity}
\label{sec:pt:regularity}
Our approximation error analysis in subsequent sections requires
estimates on the decay of the elastic fields away from the defect
core. These results do not require strong stability of solutions, but
only stability of the homogeneous lattice,
\begin{equation}
  \label{eq:pt:stab_lattice}
  \sum_{\ell \in \mA\Z^d} \b\< \ddel V(\bfO) Dv, Dv \b\> \geq c_\mA \|
  \D v \|_{L^2}^2 \qquad \forall v \in \Usz, \quad \text{for some $c_\mA > 0$.}
\end{equation}
It is easy to see that, if \eqref{eq:pt:strongstabeq} holds for {\em
  any} $u \in \Us^{1,2}$, then \eqref{eq:pt:stab_lattice} holds with
$c_\mA \geq c_0$; see \S~\ref{sec:prf_far_field_stab}.

Our first main result is the following decay estimate, which forms the basis of
our subsequent approximation error analysis. While it is widely assumed that the
decay $|Du(\ell)| \lesssim |\ell|^{-d}$ holds (e.g., \cite{Balluffi}), we are
unaware of rigorous proofs in this direction, or of results for higher-order
gradients.

\begin{theorem}
  \label{th:pt:regularity}
  Suppose $k \geq 3$, that the lattice is stable \eqref{eq:pt:stab_lattice}, and
  that $u \in \Us^{1,2}$ is a critical point, $\del\E(u) = 0$. Then there exist
  constants $C > 0, u_\infty \in \R^m$ such that, for $1 \leq j \leq k-2$, and for
  $|\ell|$ sufficiently large,
  \begin{equation}
    \label{eq:pt:regularity}
    |D^j u(\ell)| \leq C |\ell|^{1-d-j} \qquad \text{and} \qquad
    |u(\ell) - u_\infty| \leq C |\ell|^{1-d}.
  \end{equation}
\end{theorem}
\begin{proof}
  The proof for the cases $j = 0, 1$ is given in
  \S~\ref{sec:reg:decay_Du}. The proof for the case $j > 1$ is given
  in \S~\ref{sec:reg:higher_pt}.
\end{proof}

In what follows we assume $k\geq 4$, although some results are still
true with $k=3$.

\subsection{Clamped boundary conditions}
\label{sec:pt:clamped}
The simplest computational scheme to approximately solve \eqref{eq:pt:atm} is to
project the problem to a finite-dimensional subspace. Due to the decay estimates
\eqref{eq:pt:regularity} it is reasonable to expect that simply truncating to a
finite domain yields a convergent approximation scheme.

We choose a computational domain $\Omega_R \subset \L$ satisfying $(B_R\cap\L)
\subset \Omega_R$, define the approximate displacement space
\begin{equation}
  \label{eq:pt:defn_Us0}
  \Us^0(\Omega_R) := \b\{ v \in \Usz \bsep v = 0 \text{ in } \L \setminus
  \Omega \b\},
\end{equation}
and solve the finite-dimensional optimisation problem
\begin{equation}
  \label{eq:pt:clamped}
  u_R^0 \in \arg\min \b\{ \E(u) \bsep u \in \Us^0(\Omega_R) \b\}.
\end{equation}

Since ${\rm dim}(\Us^0(\Omega_R)) < \infty$, \eqref{eq:pt:clamped} is
computable. Moreover, since it is a pure Galerkin projection of
\eqref{eq:pt:atm} it is relatively straightforward to prove an error estimate.

\begin{theorem}
  \label{th:pt:clamped}
  Let $\ua$ be a strongly stable solution to \eqref{eq:pt:atm}, then there exist
  $C, R_0 > 0$ such that, for all $R \geq R_0$ there exists a strongly stable
  solution $\ua^0_R$ of \eqref{eq:pt:clamped} satisfying
  \begin{equation}
    \label{eq:pt:clamped:errest}
    \| \D \ua^0_R - \D \ua \|_{L^2} \leq C R^{-d/2} \qquad
    \text{and} \qquad \b| \E(\ua^0_R) - \E(\ua) \b| \leq C R^{-d}.
  \end{equation}
\end{theorem}
\begin{proof}[Idea of proof]
  We shall construct a truncation operator $T_R : \Us^{1,2} \to
  \Us^0(\Omega_R)$ such that $T_R v = 0$ in $\L \setminus B_R$, and
  which satisfies $\| \D T_R v - \D v \|_{L^2} \leq C \| \D v
  \|_{L^2(\R^d \setminus B_{R/2})}$. Since $\del\E$ and $\ddel \E$ are
  continuous it follows that $\ddel\E(T_R \ua)$ is positive definite
  for sufficiently large $R$ and that
  \begin{displaymath}
    \| \del\E(T_R \ua) \|_{(\Us^{1,2})^*} 
    = \| \del\E(T_R \ua) - \del\E(\ua) \|_{(\Us^{1,2})^*}
    \lesssim \|\D T_R \ua - \D \ua \|_{L^2} \to 0,
  \end{displaymath}
  as $R \to \infty$. The inverse function theorem (IFT) yields the existence of
  a solution $\ua^0_R$, for sufficiently large $R$, satisfying
  \begin{displaymath}
    \| \D \ua^0_R - \D T_R \ua \|_{L^2} \leq C \| \D T_R \ua - \D \ua \|_{L^2},
  \end{displaymath}
  and consequently also $\| \D \ua^0_R - \D\ua \|_{L^2} \leq C \| \D T_R \ua -
  \D \ua \|_{L^2}$.

  Finally, the regularity estimate \ref{th:pt:regularity} yields the stated rate
  in terms of $R$.  The proof is detailed in \S \ref{sec:approx:dir}.
\end{proof}

\begin{remark}[Computational cost]
  \label{rem:pt:comp_cost}
  In addition to the assumptions of Theorem~\ref{th:pt:clamped}, assume that $R
  \approx N^{1/d}$, which is a shape regularity condition for $\Omega_R$, then
  the error estimate \eqref{eq:pt:clamped:errest} reads
  \begin{equation}
    \label{eq:pt:clamped:errest_cost}
    \| \D \ua^0_R - \D \ua \|_{L^2} \leq C N^{-1/2} \qquad
    \text{and} \qquad \b| \E(\ua^0_R) - \E(\ua) \b| \leq C N^{-1}.
  \end{equation}
  In particular, if \eqref{eq:pt:clamped} can be solved with linear
  computational cost, then \eqref{eq:pt:clamped:errest_cost} is an
  error estimate in terms of the computational cost required to solve
  the approximate problem.
\end{remark}

\subsection{Periodic boundary conditions}
\label{sec:pt:per}
For simulating point defects (and often even dislocations), periodic boundary
conditions appear to be by far the most popular choice.  To implement periodic
boundary conditions, let $\omega_R \subset \R^d$ be connected such that
$B_R\subset \omega_R$, and $\mB = (b_1, \dots, b_d) \in \R^{d \times d}$ such
that $b_i \in \mA\Z^d$, $\bigcup_{\alpha \in \Z^d} \b(\mB \alpha + \omega_R\b) =
\R^d$, and the shifted domains $\mB\alpha+\omega_R$ are disjoint.

Let $\Omega_R := \omega_R \cap \L$ be the {\em periodic computational
  domain} and $\Omega_R^\per := \bigcup_{\alpha \in \Z^d} (\mB\alpha +
\Omega_R)$ the periodically repeated domain (with an infinite lattice of
defects). For simplicity, suppose that $\omega_R$ is compatible with
$\T_\L$, i.e., there exists a subset $\T_R \subset \T_\L$ such
that ${\rm clos}(\omega_R) = \cup \T_R$.
%
The space of admissible periodic displacements is given by
\begin{displaymath}
  \Us^\per(\Omega_R) := \b\{ u : \Omega_R^\per \to \R^m \bsep
  u(\ell+b_i) = u(\ell) \text{ for $\ell \in \Omega_R^\per, i = 1,
    \dots, d$} \b\}.
\end{displaymath}

The energy functional for periodic relative displacements
$u \in \Us^\per(\Omega_R)$ is given by
\begin{align*}
  \E^\per_R(u) := \sum_{\ell \in \Omega_R} V_\ell(D u(\ell)).
\end{align*}
For this definition to be meaningful, we assume for the remainder of the
discussion of periodic boundary conditions that $B_{\Rcore+\rcut} \cap \L
\subset \Omega$, that is, $R > \Rcore + \rcut$.


The computational task is to solve the finite-dimensional optimisation
problem
\begin{equation}
  \label{eq:pt:per}
  \ua^\per_R \in \arg\min \b\{ \E^\per_R(u) \bsep u \in
  \Us^\per(\Omega_R) \b\}.
\end{equation}

\begin{theorem}
  \label{th:pt:per}
  Let $\ua$ be a strongly stable solution to \eqref{eq:pt:atm}, then
  there exist $C, R_0 > 0$ such that, for any periodic computational
  domain $\Omega_R$ with associated continuous domain $\omega_R$
  satisfying $B_R \subset \omega_R$ for some $R \geq R_0$, there exists a
  strongly stable solution $\ua^\per_R$ to \eqref{eq:pt:per}
  satisfying
  \begin{equation}
    \label{eq:pt:per:errest}
    \b\| \D \ua^\per_R - \D \ua \b\|_{L^2(\omega_R)} \leq C R^{-d/2} \qquad
    \text{and} \qquad
    \b|\E(\ua) - \E^\per_R(\ua^\per_R)\b| \leq C R^{-d}.
  \end{equation}
\end{theorem}
\begin{proof}[Idea of proof]
  The proof proceeds much in the same manner as for Theorem \ref{th:pt:clamped},
  but some details are more involved due to the fact that \eqref{eq:pt:per} is
  {\em not} a Galerkin projection of \eqref{eq:pt:atm}. The main additional
  difficulty is that the strong convergence
  $\D T_R \ua|_{\omega_R} \to \D \ua|_{\omega_R}$ does not immediately imply
  stability of the periodic hessian, i.e.,
  \begin{equation}
    \label{eq:pt:per:errest_stab}
    \b\< \ddel\E_R^\per(T_R \ua) v, v \b\> 
    \geq c_0 \| \D v \|_{L^2(\omega_R)}^2 
    \qquad \forall v \in \Us^\per(\Omega_R).
  \end{equation}
  To prove this result, we consider the limit as $R \to \infty$ (with
  an arbitrary sequence of associated domains $\Omega_R$) and decompose
  test functions into a core and a far-field component $v = v^{\rm co}
  + v^{\rm ff}$, where $v^{\rm co} = T_S v$, with $S = S(R) \uparrow
  \infty$ as $R \to \infty$ ``sufficiently slowly''. We then show that
  stability of $\ddel\E(\ua)$ implies positivity of $\< H_R v^{\rm
    co}, v^{\rm co} \>$ while stability of the homogeneous lattice
  \eqref{eq:pt:stab_lattice} implies positivity of $\< H_R v^{\rm ff},
  v^{\rm ff} \>$. The cross-terms vanish in the limit. In this manner
  we obtain \eqref{eq:pt:per:errest_stab} for sufficiently large
  $R$. The details are given in~\S \ref{sec:approx:per}.
\end{proof}

\begin{remark}
  1. Remark \ref{rem:pt:comp_cost} applies verbatim to periodic
  boundary conditions.

  2. Compared with Theorem \ref{th:pt:clamped} we now only control the
  geometry in the computational domain $\Omega_R$. We can, however,
  ``post-process'' to obtain a global defect geometry $\bar{v}^\per :=
  \Pi_R \ua^\per_R$ (slightly abusing notation since
  $\ua^\per_R \not\in \UsH(\L)$), for which we still get the
  estimate $\|\D\bar{v}^\per - \D\ua \|_{L^2} \leq C R^{-d/2}$.
\end{remark}

\subsection{Boundary conditions from linear elasticity}
\label{sec:pt:lin}
In this section we consider a scheme where the elastic far-field of the crystal
is approximated by linearised lattice elasticity. The idea is to define a
computational domain $\Omega \subset \L$ and to use the lattice Green's
function, or other means, to explicitly compute the displacement field and
energy in $\L \setminus \Omega$ as predicted by linearised elasticity. Our
formulation is inspired by classical as well as recent multiscale methods of
this type \cite{WoodwardRao:2002, Trinkle:2008, Sinclair:1971, XLi:2009}, but
simplified to allow for a straightforward analysis. Such schemes are employed
primarily in the simulation of dislocations, however we shall observe here that
there is considerable potential also for the simulation of point defects.

We fix a computational domain $\Omega_R \subset \L$ such that $B_R \cap \L
\subset\Omega_R$ (for $R\geq \Rcore$) and we linearise the interaction outside
of $\Omega_R$
\begin{equation}
  \label{eq:pt:Vlin}
  V(Du) \approx V(\bfO) + \< \del V(\bfO), Du \> + \smfrac12 \<
  \ddel V(\bfO) Du, Du \> =: V^\lin(Du).
\end{equation}
This results in a modified approximate energy difference functional
\begin{align*}
  \E^\lin_R(u) &:= \sum_{\ell \in \Omega_R}
  V_\ell\b(Du(\ell)\b) +
  \sum_{\ell \in \L \setminus \Omega_R} V^\lin\b(Du(\ell)\b).
\end{align*}
Analogously to Lemma \ref{th:extension_lemma} it follows that $\E^\lin_R$ can be
extended by continuity to a functional $\E^\lin_R \in C^k(\Us^{1,2})$.

Thus, we aim to compute
\begin{equation}
  \label{eq:pt:lin}
  \ulin_R \in \arg\min \b\{ \E^\lin_R(u) \bsep u \in \UsH(\L) \b\}.
\end{equation}

\begin{remark}
  \label{rem:lin:efficient}
  The optimisation problem \eqref{eq:pt:lin} is still
  infinite-dimensional, however, by defining $\Omega_R' := \Omega_R \cup \bigcup_{\ell
    \in \Omega} \Rg_\ell$ and the effective energy
  functional
  \begin{displaymath}
    \E^{\rm red}_R(u) := \inf \B\{ \E^\lin_R(v) \bsep v \in
    \UsH(\L), v|_{\Omega_R'} =
    u|_{\Omega_R'} \B\},
  \end{displaymath}
  for any $u : \Omega_R' \to \R^m$, it can be reduced to an effectively
  finite-dimensional problem.  The reduced energy $\E^{\rm
    red}_\Omega$ can be computed efficiently employing lattice Green's
  functions or similar techniques \cite{WoodwardRao:2002,
    Trinkle:2008, Sinclair:1971, XLi:2009}. This process likely
  introduces additional approximation errors, which we ignore
  subsequently.  Thus, we only present an analysis of an idealised
  scheme, as a foundation for further work on more practical variants
  of \eqref{eq:pt:lin}.
\end{remark}

\begin{theorem}
  \label{th:pt:lin}
  Let $\ua$ be a strongly stable solution to \eqref{eq:pt:atm}, then
  there exist $C, R_0 > 0$ such that for all domains $\Omega_R \subset
  \L$ with $B_R \cap \L \subset \Omega_R$ and $R \geq R_0$, there exists a
  strongly stable solution of \eqref{eq:pt:lin} satisfying
  \begin{equation}
    \label{eq:pt:lin:errest}
     \b\| \D\ulin_R - \D\ua \b\|_{\UsH} \leq C R^{-3d/2} \qquad
     \text{and} \qquad  \b|\E(\ulin_R) - \E(\ua)\b| \leq C R^{-2d}.
  \end{equation}
\end{theorem}
\begin{proof}[Idea of proof]
  For the linear elasticity method, the computational space is the
  same as for the full atomistic problem, hence the error is
  determined by the consistency error committed when we replaced $V$
  with $V^\lin$ in the far-field. This error is readily estimated by a
  remainder in a Taylor expansion,
  \begin{displaymath}
    \b|\del V^\lin(D\ua(\ell)) - \del V(D\ua(\ell)) \b| \lesssim |D\ua(\ell)|^2,
  \end{displaymath}
  which immediately implies that
  \begin{displaymath}
    \b| \< \del \E^\lin_R(\ua) - \del \E(\ua), v \b\>\b| \lesssim \|
    D\ua \|_{\ell^4(\L \setminus \Omega_R)}^2 \| D v
    \|_{\ell^2(\L\setminus\Omega_R)} \lesssim \|
    D\ua \|_{\ell^4(\L \setminus \Omega_R)}^2 \| \D v \|_{L^2}.
  \end{displaymath}
  After establishing also stability of $\ddel\E^\lin_R$, which
  follows from a similar argument we obtain $\| \D
    \ulin_R - \D \ua \|_{L^2} \lesssim \| D \ua
  \|_{\ell^4(\L\setminus\Omega_R)}^2$, and employing the decay estimate
  \eqref{eq:pt:regularity} yields the stated error bound.

  The details of the proof are given in \S~\ref{sec:approx:lin}.
\end{proof}

\subsection{Boundary conditions from nonlinear elasticity}
\label{sec:pt:ac}
\quad A natural further question to ask is whether employing nonlinear
elasticity in the far-field instead of linear elasticity can improve further
upon the approximation error. In this context it is only meaningful to employ
{\em continuum} nonlinear elasticity, since our original atomistic model can
already be viewed as a lattice nonlinear elasticity model. This leads us into
considering a class of multiscale schemes, atomistic-to-continuum coupling
methods (a/c methods), that has received considerable attention in the numerical
analysis literature in recent years. We refer to the review article
\cite{LuOr:acta} for an introduction and comprehensive references. A key
conceptual difference, from an analytical point of view, between a/c methods and
the methods we considered until now is that they exploit higher-order
regularity, that is, the decay of $D^2 \ua$, rather than only decay of
$D\ua$. Methods of this kind were pioneered, e.g., in \cite{Ortiz:1995a,
  Shenoy:1999a, Shimokawa:2004, XiBe:2004}.

Due to the relative complexity of a/c coupling schemes we shall not present
comprehensive results in this section, but instead illustrate how existing error
estimates can be reformulated within our framework. This extends previous works
such as \cite{OrtnerShapeev:2011pre,Or:2011a,PRE-ac.2dcorners} and presents a
framework for ongoing and future development of a/c methods and their analysis;
see for example \cite{OrtnerZhang2014, LiOrtnerShapeevEtAl-blended,
  2012-CMAME-optbqce, 2013-PRE-bqcfcomp}, and references therein, for works in
this direction.

We choose an {\em atomistic region} $\Omega^\a_R \subset \L$, an {\em interface
  region} $\Omega^\i_R$ and $\omega_R \subset \R^d$ a {\em continuum} simply
connected domain such that $\Omega^\a_R \cup \Omega^\i_R \subset \omega_R$.  Let
$\T_R$ be a regular triangulation of $\omega_R$, let $h(x) := \max_{T \in \T_R,
  x \in T} {\rm diam}(T)$, and let $I_R$ denote the corresponding nodal
interpolation operator. We let $R$ and $R_\c$ denote the sizes of $\Omega^\a_R$
and $\omega_R$ in the sense that
\begin{equation}\label{eq:ac:pt:radii}
	B_R \cap \L \subset \Omega^\a_R
\qquad\text{and}\qquad
	B_{R_\c}
    \subset
    \omega_R
    \subset
    B_{c_0 R_\c}
\end{equation}
for some $c_0>0$.

As space of admissible displacements we define
\begin{align*}
  \Us^0(\T_R) &:= \b\{ u \in C(\R^d; \R^d) \bsep u|_T \text{ is
    affine for all } T \in \T_R, \text{ and } u|_{\R^d\setminus\omega_R}
  = 0 \b\}.
\end{align*}
We consider a/c coupling energy functionals, defined for $u \in \Us^0(\T_R)$, of
the form
\begin{equation} \label{eq:pt:ac:defn_E}
  \E^\ac_R(u) := \sum_{\ell \in \Omega^\a_R} V_\ell(Du(\ell))
  +\sum_{\ell \in \Omega^\i_R} V_\ell^\i(Du(\ell))
  +\sum_{T \in \T_R} v_T^{\rm eff} W(\D u),
\end{equation}
where the various new terms are defined as follows:
\begin{itemize}
\item For each $T \in \T_R$, $v^{\rm eff}(T) := {\rm vol}\b( T
  \setminus \cup_{\ell \in \Omega^\a_R \cup \Omega^\i_R} {\rm vor}(\ell) \b)$
  is the {\em effective volume} of $T$, where ${\rm vor}(\ell)$
  denotes the Voronoi cell associated with the lattice site $\ell$;
\item $V_\ell^\i \in C^k( (\R^d)^\Rg )$ is an {\em interface potential},
  which specifies the coupling scheme;
\item $W(\mF) := V( \mF \cdot \Rg)$ is the {\em Cauchy--Born strain energy
    function}, which specifies the continuum model.
\end{itemize}

With this definition it is again easy to see that $\E^\ac_R \in
C^k(\Us^0(\T_R))$. We now aim to compute
\begin{equation}
  \label{eq:pt:ac}
  u^\ac_R \in \arg\min\b\{ \E^\ac_R(u) \bsep u \in \Us^0(\T_R)  \b\}.
\end{equation}

The choice of the interface site-potentials $V^\i_\ell$ is the key component in
the formulation of a/c couplings. Many variants of a/c couplings exist that fit
within the above framework \cite{LuOr:acta}. In order to demonstrate how to
apply our framework to this setting, we shall restrict ourselves to QNL type
schemes \cite{Shimokawa:2004, E:2006, PRE-ac.2dcorners}, but our discussion
applies essentially verbatim to other force-consistent energy-based schemes such
as \cite{Shapeev:2010a, Shapeev:3D, MakrMitsRos:2012}. For other types of a/c
couplings the general framework is still applicable; see in particular
\cite{LiOrtnerShapeevEtAl-blended} for a complete analysis of blending-type a/c
methods.

As a starting point of our present analysis we {\em assume} a result that is
proven in various forms in the literature, e.g., in \cite{PRE-ac.2dcorners,
  OrtnerShapeev:2011pre, MakrMitsRos:2012}: We assume that there exist $\eta>0$
and $c_1>0$ such that there exists a strongly stable solution $\uac_R$ to
\eqref{eq:pt:ac} satisfying
\begin{equation}
  \label{eq:ac:pt:est}
  \|\D \uac_R-\D \ua\| \leq c_1 \b(\| h
  D^2 \ua \|_{\ell^2(\L\cap(\omega_R\setminus B_R))}
  +
  \|D \ua\|_{\ell^2(\L \setminus B_{R_\c/2})}\b)
  ,
\end{equation}
provided that
$\|h D^2 \ua \|_{\ell^2(\L\cap(\omega_R\setminus B_R))} + \|D \ua\|_{\ell^2(\L
  \setminus B_{R_\c/2})} \leq \eta$.
Such a result follows from consistency and stability of an a/c scheme and
applying of the Inverse Function Theorem along similar lines as in the preceding
sections.

\begin{proposition}\label{prop:ACresult}
  Let $\ua$ be a strongly stable solution of \eqref{eq:pt:atm} and assume that \eqref{eq:ac:pt:radii} and \eqref{eq:ac:pt:est} hold.
  Further we require that $\omega_R$ and
  $\T_R$ satisfy the following quasi-optimality conditions:
  \begin{align}
    c_2 R^{1+2/d} \leq R_\c \leq c_3 R^{1+2/d}
    ,\qquad \text{and} \qquad
    \label{eq:ac:restrictions}
    &|h(x)| \leq c_4 \B(\smfrac{|x|}{R} \B)^\beta ~~ \text{ with }
    ~~
      \beta < \smfrac{d+2}{2}.
  \end{align}

  Then there exists a constant $C$, depening on $\eta$, $c_2$,
    $c_3$, $c_4$, and $\beta$ such that, for $R$ sufficiently large,
  \begin{align}
    \label{eq:pt:rate-ac}
    \b\| \D\uac_R - \D\ua \b\|_{L^2} &\leq C R^{-d/2-1}.
  \end{align}
\end{proposition}
\begin{proof}[Idea of proof]
  The proof consists in estimating the right-hand side of \eqref{eq:ac:pt:est}
  in terms of $R$.  Note that assuming \eqref{eq:ac:restrictions} ensures that
  the truncation term $\|D \ua \|_{\ell^2(\L \setminus B_{R_\c/2})}$ does not
  dominate the coarse-graining term
  $\| h D^2 \ua \|_{\ell^2(\L\cap(\omega_R \setminus B_R))}$.
\end{proof}

\begin{remark}
  1. It is interesting to note that an atomistic continuum coupling is not
  competitive when compared against coupling to lattice linear elasticity. The
  primary reason for this is that the loss of interaction symmetry which causes
  a first-order coupling error at the a/c interface (the finite element error
  could be further reduced by considering higher order finite elements
  \cite{OrtWu-ac}). Since $|\D^j \ua(x)| \lesssim |x|^{1-d-j}$ the linearisation
  error $|\D\ua(x)|^2 \lesssim |x|^{-2d}$ is smaller than the coupling error
  $|\D^2 \ua(x)| \lesssim |x|^{-d-1}$.

  2. Using our framework, the analysis in \cite{OrtWu-ac} suggests that one can
  generically expect the rate $R^{-d -2}$ for the energy error.

  3. To convert \eqref{eq:pt:rate-ac} into an estimate in terms of computational
  complexity, we note that, if we also have $|h(x)| \geq c_5 (|x|/R)^{\beta'}$
  with $\beta' > 1$, then the total number of degrees of freedom (in the
  atomistic and continuum region) is bounded by $N_{\rm dof} \leq C R^d$. The
  error estimate then reads
  \begin{displaymath}
    \b\| \D\ua - \D\uac \b\|_{L^2} \leq C \cases{ N_{\rm dof}^{-1}, &
      d = 2, \\ N_{\rm dof}^{-5/6}, & d = 3. } \qedhere
  \end{displaymath}
\end{remark}

\subsection{Numerical results}
\label{sec:pt:num}
\subsubsection{Setup}
\label{sec:num:pt:setup}
We present two examples of ``hypothetical'' point defects in a 2D
triangular lattice
\begin{equation}
  \label{eq:num:pt:A}
  \mA \Z^d \quad \text{where} \quad \mA = 
  {\footnotesize \Big(\begin{matrix} 1 & 1/2 \\ 0 & \sqrt{3}/2 \end{matrix} \Big)},
\end{equation}
a vacancy and an interstitial, both displayed in Figure
\ref{fig:pt_defects_comp}. For the vacancy, let $\L = \mA \Z^2 \setminus
\{0\}$. For the interstitial, let $\L = \mA \Z^2 \cup \{ (1/2, 0)\}$. (We tested
various positions for the interstitial and the centre of a bond between two
nearest neighbours appeared to be the only stable one for the interaction
potential that we employ.)  For each $\ell \in \L$, let $\Rg(\ell)$ denote the
set of directions connecting to $\ell$, defined by the bonds displayed in Figure
\ref{fig:pt_defects_comp}. Then, the site energy is defined by
\begin{align*}
  & V_\ell(Dy(\ell)) = \sum_{\rho \in \Nhd_\ell} \phi\b( |D_\rho
  y(\ell)|\b)
  + G\bg( \sum_{\rho \in \Nhd_\ell} \psi\b(|D_\rho y(\ell)|\b) \bg),  \\
  & \phi(r) = e^{-2\alpha (r-1)} - 2 e^{-\alpha (r-1)}, \quad
  \psi(r) = e^{-\beta r}, \quad G(s) = \gamma \b( (s - s_0)^2 +
  (s-s_0)^4 \b), \\
  & \text{with parameters } \alpha = 4, \beta = 3, \gamma = 5, s_0 = 6
\psi(0.9).
\end{align*}

To compute the equilibria we employ a robust preconditioned L-BFGS
algorithm specifically designed for large-scale atomistic optimisation
problems \cite{CsGoOrPa:opt}. It is terminated at an
$\ell^\infty$-residual of $10^{-7}$.

We exclusively employ hexagonal computational domains. We slightly re-define
$N$, letting it now denote the number of atoms in the {\em inner computational
  domain}, that is, $\# \Omega_N$ in the ATM-DIR, ATM-PER and LIN methods and
$\# \Omega_N^\a$ in the AC method. Then, our analysis predicts the following
rates of convergence for both model problems, \medskip
\begin{center}
  Summary of Convergence Rates \\
  (Point Defect in Two Dimensions) \\[1mm]
  \begin{tabular}{r|cccc}
    \hline
    Method & ATM-DIR & ATM-PER & LIN & AC \\ \hline 
    Energy-Norm & $N^{-1/2}$ & $N^{-1/2}$ & $N^{-3/2}$ & $N^{-1}$ \\ 
    Energy & $N^{-1}$ & $N^{-1}$ & $N^{-2}$ & $N^{-2}$ \\
    \hline
  \end{tabular}
\end{center}  
where the rate $N^{-2}$ for the energy in the AC case is predicted in
\cite{OrtWu-ac}.

\medskip

We make some final remarks concerning the LIN and AC methods:
\begin{itemize}
\item[\bf LIN:] For the experiments in this paper, we did not
  implement an efficient variant based on Green's functions or fast
  summation methods. Instead, we chose as an {\em inner domain}
  $\Omega_N$ a hexagon of side-length $K$ (then, $N \approx 3 K^2$)
  within a larger domain of a hexagon of side-length $K^3$. It can be
  readily checked that this modification of the method does not affect
  the convergence rates.

\item[\bf AC:] To generate the finite element mesh, we first generate
  a hexagonal {\em inner domain} $\Omega_N^\a$ with sidelength $K$
  (then, $N \approx 3 K^2$), with an inner triangulation. The
  triangulation is then extended by successively adding layers of
  elements, at all time retaining the hexagonal shape of the domain,
  until the sidelength reaches $K^2 \approx N$. This construction is
  the same as the one used in \cite{2013-PRE-bqcfcomp,
    2012-CMAME-optbqce}.
\end{itemize}

\subsubsection{Discussion of results}\label{sec:num:pt:disc}
The graphs of $N$ versus the geometry error and the energy error for the vacancy
problem are presented in Figure \ref{fig:vacancy_err} and for the insterstitial
problem in Figure \ref{fig:interstitial_err}.

All slopes are as predicted with mild pre-asymptotic regimes for the ATM-PER and
AC methods. The only exception is the energy for the LIN method, which displays
a faster decay than predicted. We can offer no explanation at this point.

The main feature we wish to point out is the difference of at least
an order of magnitude in the prefactor for the geometry error and of {\em three}
orders of magnitude in the prefactor for the energy error. Most likely, this
discrepancy is simply due to the fact that the interstitial causes a much more
substantial distortion of the atom positions.

The prefactor is a crucial piece of information about the accuracy of
computational schemes that our analysis does not readily reveal. Ideally, one
would like to establish estimates of the form
$\| D \bar{u}^{\rm apx}_N - D \ua \|_{\ell^2} \leq C_* N^{-p} + o(N^{-p})$,
where $C_*$ and $p$ can be given explicitly, however much finer
context-sensitive estimates would be required to achieve this.

\begin{figure}
  \begin{center}
    \includegraphics[height=6.8cm]{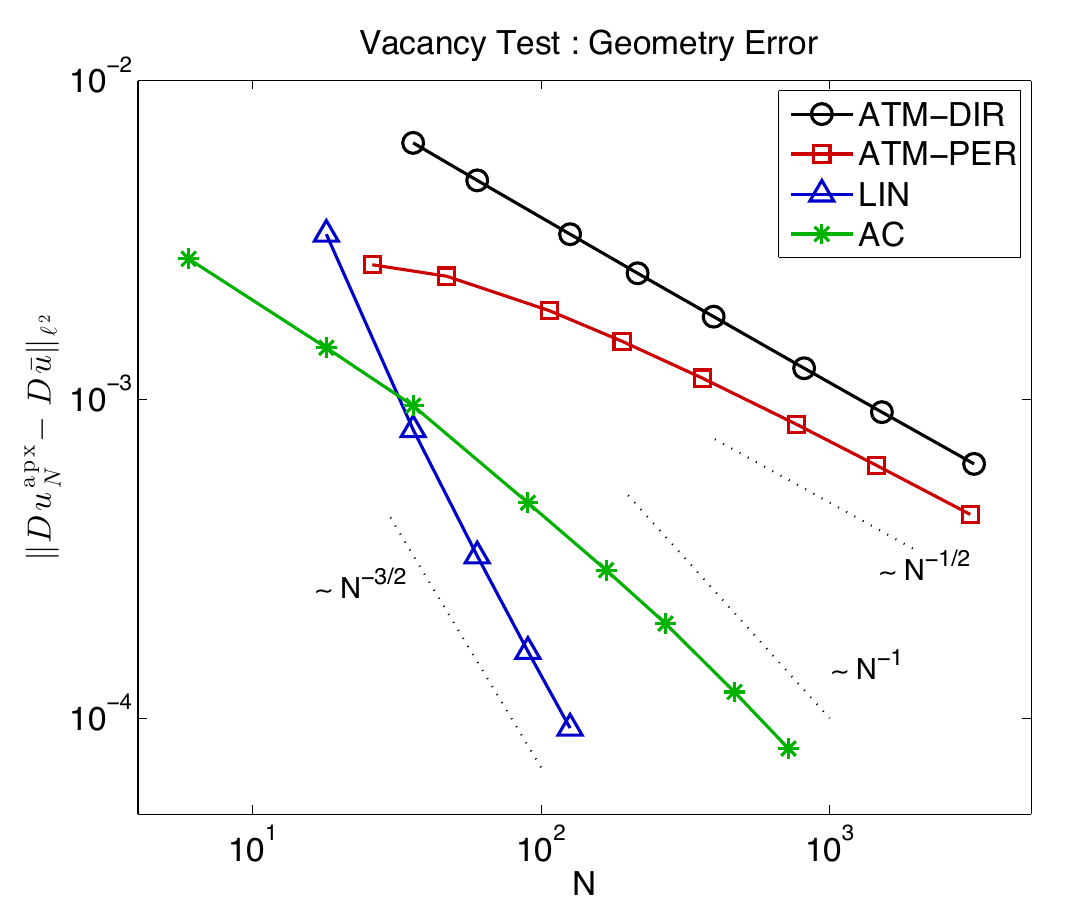}
    \includegraphics[height=6.8cm]{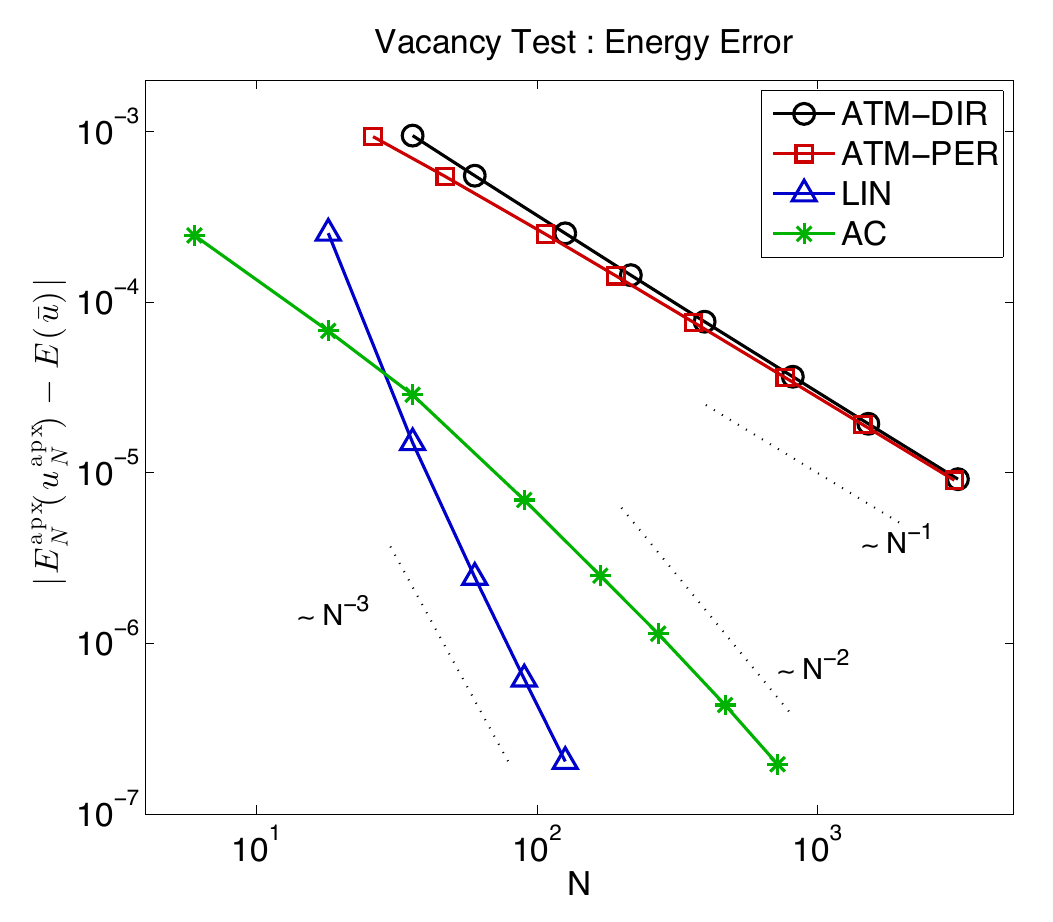} \\
    \footnotesize (a) \hspace{7cm} (b)
  \end{center}
  \caption{\label{fig:vacancy_err} Rates of convergence, in the
    vacancy example, of four types of boundary conditions for (a) the
    geometry error and (b) the energy error. $N$ denotes the number of
    atoms in the {\em inner computational domain}; see
    \S~\ref{sec:num:pt:setup} for definitions.}
\end{figure}

\begin{figure}
  \begin{center}
    \includegraphics[height=6.8cm]{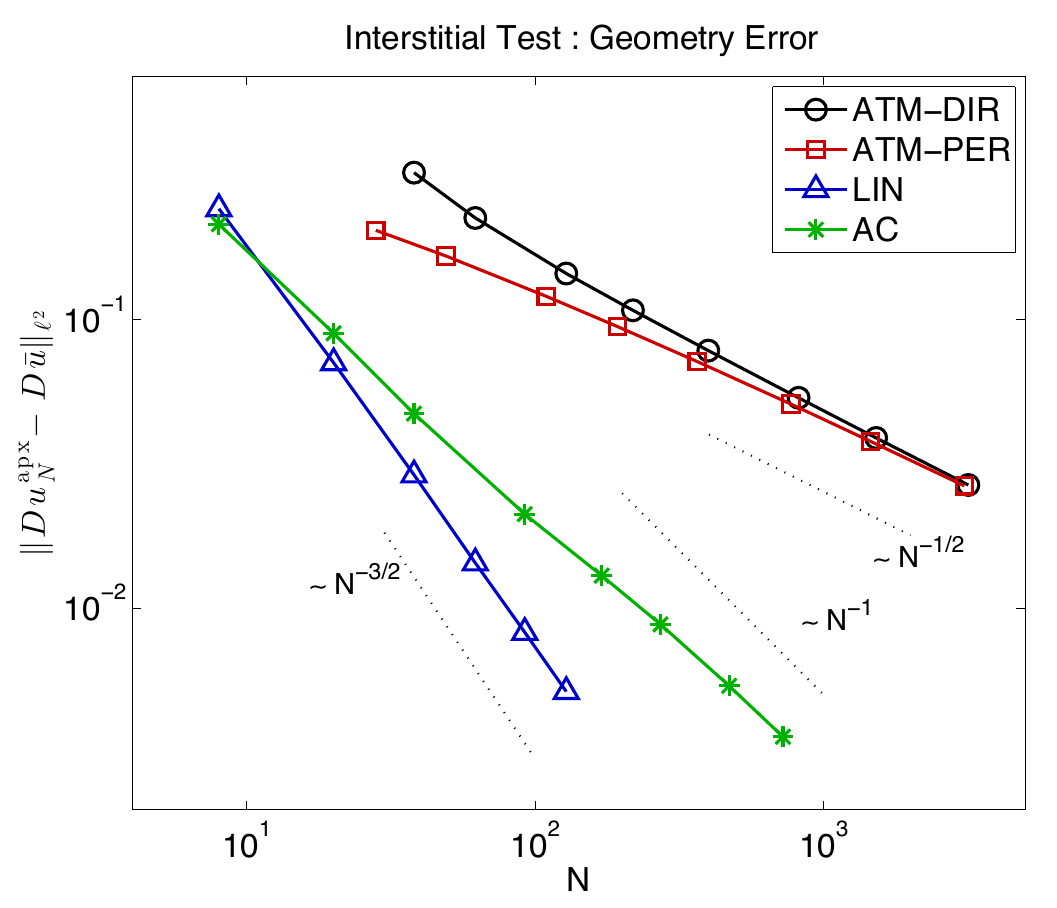}
    \includegraphics[height=6.8cm]{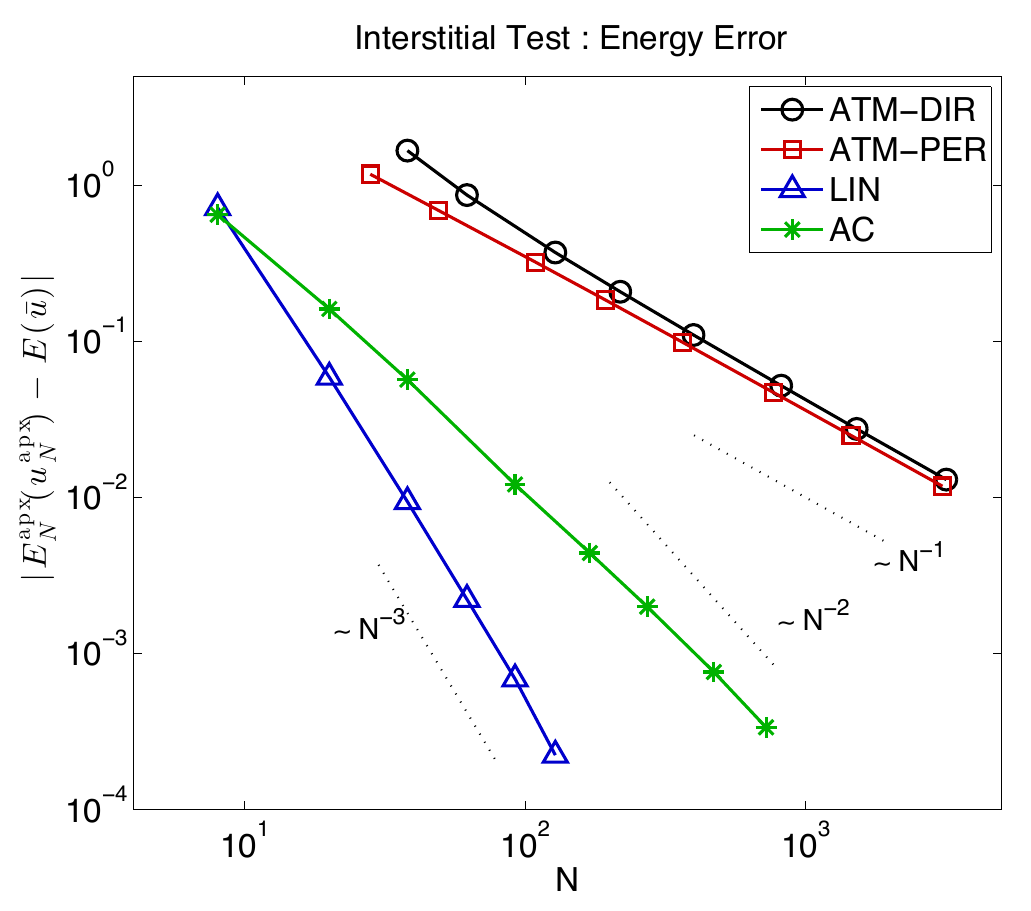} \\
    \footnotesize (a) \hspace{7cm} (b)
  \end{center}
  \caption{\label{fig:interstitial_err} Rates of convergence, in the
    interstitial example, of four types of boundary conditions for (a)
    the geometry error and (b) the energy error;
    see \S~\ref{sec:num:pt:setup} for definitions.}
\end{figure}

\FloatBarrier

\section{Dislocations}
\label{sec:sum-dis}
We now present an atomistic model for dislocations and analogous
regularity and approximation results. To avoid excessive duplication
we will occasionally build on and reference \S~\ref{sec:sum-pt}. Our
presentation also builds on the descriptions in \cite{ArizaOrtiz:2005,
  Hudson:stab}. For more general introductions to dislocations,
including modeling aspects as well as analytical and computational
solution strategies we refer to \cite{BulatovCai, HirthLothe}.

\subsection{Atomistic model}
\label{sec:dis:atm}
We consider a model for straight dislocation lines obtained by projecting a 3D
crystal.
Briefly, let $\mB \Z^3$ denote a 3D Bravais lattice, oriented in such a way that
the dislocation direction can be chosen parallel to $e_3$ and the Burgers vector
can be chosen as $\burg = (\burg_1, 0, \burg_3) \in \mB \Z^3$. \label{sym:burg}
We consider displacements $W : \mB \Z^3 \to \R^3$ of the 3D lattice that
\alert{are independent of the direction of the dislocation direction, i.e.,
  $e_3$.} Thus, we choose a projected reference lattice
$\L := \mA \Z^2 := \{ (\ell_1,\ell_2) \sep \ell \in \mB \Z^3 \}$, and identify
$W(X) = w(X_{12})$, where $w : \L \to \R^3$, and here and throughout we write
$a_{12} = (a_1, a_2)$ for a vector $a \in \R^3$.  It can be readily checked that
this projection is again a Bravais lattice.

We may again choose a regular triangulation $\T_\L$ satisfying
$\T_\L + \rho = \T_\L$ for all $\rho \in \L$. Each lattice function
$v : \L \to \R^m$ has an associated P1 interpolant $I v : \R^2 \to \R^m$ and we
identify $\D v= \D Iv$. Further, we recall the definition of the spaces
$\Usz, \UsH$ from \eqref{eq:pt:defnUszUsH}.

Let $\hat{x} \in \R^2$ be the position of the dislocation core and
$\Gamma := \{ x \in \R^2 \sep x_2 = \hat{x}_2, x_1 \geq \hat{x}_1 \}$ the
``branch-cut'' (cf. \eqref{eq:disl:linel_pde}),
chosen such that $\Gamma \cap \L = \emptyset$.
\label{sym:xhat-Gamma} In order to model dislocations the site energy potential
must be invariant under lattice slip. Normally, this is a consequence of
permutation invariance of the site energy, but here we will
formulate a minimal assumption. To that end, we define the slip operator $S_0$
acting on a displacement $w : \L \to \R^3$, or $w : \R^2 \to \R^3$, by
\begin{equation}
  \label{eq:disl:defn_Sop}
  S_0 w(x) := \cases{ w(x), & x_2 > \hat{x}_2, \\
    w(x - \burg_{12}) - \burg, & x_2 < \hat{x}_2.}
\end{equation}
This operation leaves the 3D atom configuration corresponding to the
displacement $u$ invariant: if $Y(X) = X + w(X_{12})$ and
$Y^S(X) = X + S_0 w(X_{12})$, then $Y(X)=Y^S(X)$ for $X_2 > \hat{x}_2$, while
for $X_2 < \hat{x}_2$,
\begin{displaymath}
  Y^S(X) = X + w(X_{12} - \burg_{12}) - \burg 
  = [X-\burg] + w([X-\burg]_{12})
  = Y(X-\burg),
\end{displaymath}
that is, $Y^S$ represents only a relabelling of the atoms. Therefore, formally,
if $V(Dw)$ is the site energy potential as a function of displacement, then it
must by invariant under the map $w \mapsto S_0 w$:
\begin{equation}
  \label{eq:slipinvariance_formal}
  \cases{V(DS_0 w(\ell)) = V(Dw(\ell)), & \text{ for $\ell_2 > \hat{x}_2$}, \\
    V(DS_0w(\ell+\burg_{12})) = V(Dw(\ell)), & \text{ for $\ell_2 < \hat{x}_2$.}
  }
\end{equation}
In \eqref{eq:disl:slip_invariance} below we will restate this assumption for a
restricted class of displacements only, which will allow us to continue to
employ the finite range interaction assumption.

Dislocations in an infinite lattice store infinite energy due to their
topological singularity. We therefore decompose the total displacement
$w = \up+\ua$ into a {\em far-field predictor} $\up$ and a finite energy {\em
  core corrector} $\ua$, where the latter belongs again to the energy space
$\UsH$.  There is no unique way to specify $\up$, but a natural choice is the
continuum elasticity solution: For a function
$u : \R^2 \setminus \Gamma \to \R^m$ that has traces from above and below, we
denote these traces, respectively, by $u(x\pm), x \in \Gamma$.  Then we seek
$\ulin \in C^\infty(\R^2 \setminus \Gamma; \R^3)$ satisfying
\begin{equation}
  \label{eq:disl:linel_pde}
  \begin{split}
    \bbC_{i\alpha}^{j\beta} \frac{\partial^2 \ulin_i}{\partial
      x_\alpha \partial x_\beta} &= 0 \qquad \text{ in } \R^2
    \setminus \Gamma, \\
    \ulin(x+) - \ulin(x-) &= -\burg \qquad \text{ for } x \in \Gamma
    \setminus \{\hat{x}\},  \\
    \D_{e_2} \ulin(x+) - \D_{e_2} \ulin(x-) &= 0 \qquad \text{ for } x \in \Gamma
    \setminus \{\hat{x}\},
  \end{split}
\end{equation}
where the tensor $\bbC$ is the linearised Cauchy--Born tensor (derived
from the interaction potential $V$; see \S~\ref{sec:elast:lin} for
more detail).

In our analysis we require that applying the slip operator to the predictor map
$\up$ yields a smooth function in the half-space
\begin{equation}
  \label{eq:disl:defn_halfspace}
  \halfspace := \{ x_1 \geq \hat{x}_1 \} \setminus B_{\rdisl+\burg_1}(\hat{x})
\end{equation}
where $\rdisl$ is defined in Lemma \ref{th:disl:up_lemma} below. That is, we
require that $S_0\up \in C^\infty(\halfspace)$. \alert{Except in the pure screw
  dislocation case ($\burg_{12} = 0$) $\ulin$ does not satisfy this
  property. The origin of this conundrum is that linearised elasticity assumes
  infinitesimal displacements, yet we apply it in the large deformation regime
  near the defect core.}  To overcome this technical difficulty, instead of
$\up = \ulin$, we define the predictor
\begin{equation}
  \label{eq:disl:defn_u0}
  \up(x) := \ulin(\xi^{-1}(x)), \qquad \text{where} \qquad \xi(x) := x
  - \burg_{12} \smfrac{1}{2\pi} \eta(|x-\hat{x}|/\rdisl) \arg(x-\hat{x}),
\end{equation}
$\arg(x)$ denotes the angle in $(0,2\pi)$ between $\burg_{12}\propto e_1$ and
$x$, and $\eta \in C^\infty(\R)$ with $\eta = 0$ in $(-\infty, 0]$,
$\eta = 1$ in $[1, \infty)$ and $\eta' > 0$ in $(0, 1)$. While the distinction
between $\up$ and $\ulin$ is crucial, it arises from a subtle technical issue
and could be ignored on a first reading, especially in view of the following
lemma.

\begin{lemma}
  \label{th:disl:up_lemma}
  (i) Suppose that the lattice is stable \eqref{eq:pt:stab_lattice}, then
  $\ulin$ is well-defined. \alert{For $\hat{r}$ sufficiently large},
  $\xi : \R^2 \setminus \Gamma \to \R^2 \setminus \Gamma$ is a bijection, hence
  $\up$ is also well-defined on $\R^2 \setminus \Gamma$.

  (ii) We have $\D^j S_0\up(x+) = \D^j S_0\up(x-)$ for all $j \geq 0$ and
  for all $x \in \Gamma \cap \halfspace$.  In particular, upon
  extending $\up$ continuously to $\Gamma \cap \halfspace$ we obtain that
  $S_0\up \in C^\infty(\halfspace)$.

  (iii) There exists $C$ such that $|\D^n \up(x) - \D^n \ulin(\xi^{-1}(x))| \leq
  C |x|^{-n-1}$ for $x \in \R^2 \setminus (\Gamma \cup B_r)$; in
  particular $|\D^n \up(x)| \leq C |x|^{-n}$ for all $n \in \N$.
\end{lemma}
\begin{proof}
The proof is given in \S~\ref{sec:prf_disl_yd_lemma}.
\end{proof}

Statement (ii) implies that the net-Burgers vector of $\up$ (and hence of any
$\up+u, u \in \UsH$) is indeed $\burg$. Moreover, the fact that
$S_0\up \in C^\infty(\halfspace)$ will allow us to perform Taylor expansions of
finite differences.\alert{Statement (iii) indicates that $\up$ is an approximate
  far-field equilibrium,} which allows us to use $\up$ as a far-field boundary
condition (see Lemma \ref{th:disl:extension_lemma} below).

In order to keep the analysis as simple as possible we would like to keep the
convenient assumption made in the point defect case of a finite interaction
range in reference configuration. At first glance this contradicts the
invariance of the site energy under lattice slip \eqref{eq:disl:defn_Sop}, but
we can circumvent this by restricting the admissible corrector
displacements. Arguing as in \S~\ref{sec:app_cutoff_ref} we may choose
sufficiently large radii $\hat{r}_{\Adm}, \hat{m}_{\Adm}$ and define
\begin{displaymath}
  \label{eq:disl:defn_Adm}
  \Adm := \b\{ u : \L \to \R^3 \bsep \| \D u \|_{L^\infty} < \hat{m}_{\Adm}
  \text{ and } |\D u(x)| < 1/2 \text{ for } |x| > \hat{r}_{\Adm} \b\}.
\end{displaymath}
Upon choosing $\hat{m}_{\Adm}, \hat{r}_{\Adm}$ sufficiently large, we can ensure
that any potential equilibrium solution is contained in $\Adm$. Thus, the
restriction of admissible displacements to $\Adm$ is purely an analytical tool,
which ensures that we can treat $V$ as having finite range, despite admitting
slip-invariance.

For $w = \up + u, u \in \Adm$, we shall write $S_0 w = S_0 \up + Su$, where $S$
is an $\ell^2$-orthogonal operator, with dual $R = S^* =
S^{-1}$, \label{sym:S-R}
\begin{align*}
  Su(\ell) 
  := \cases{ u(\ell), & \ell_2 > \hat{x}_2, \\
  u(\ell-\burg_{12}), & \ell_2 < \hat{x}_2 }
  \quad 
   \text{and} \quad
   Ru(\ell) 
  := \cases{ u(\ell), & \ell_2 > \hat{x}_2, \\
  u(\ell+\burg_{12}), & \ell_2 < \hat{x}_2.  }
\end{align*}

We can now rigorously formulate the assumptions on the site energy potential: We
assume that $V \in C^k((\R^3)^\Rg)$, $k \geq 4$, where
$\Rg \subset \L \setminus \{0\}$ such that for each $u \in \Adm$, and
$w = \up + u$, the site energy associated with a lattice site $\ell$ is given by
$V(Dw(\ell))$, where $Dw(\ell) \equiv D_\Rg w(\ell)$. We assume again that
$V({\bm 0}) = 0$ (that is, $V$ is the energy difference from the reference
lattice) and that $\Rg, V$ are point symmetric \eqref{eq:pt:ptsymm}. We shall
assume throughout that $V$ is invariant under lattice slip, reformulating
\eqref{eq:slipinvariance_formal} as
\begin{equation}
  \label{eq:disl:slip_invariance}
  V\b(D(\up+u)(\ell)\b) 
  = V\b(R D S_0(\up+u)(\ell)\b)
  \qquad \forall u \in \Adm, \ell \in \L.
\end{equation}%
In addition, to guarantee lattice stability (both before and after shift) we
assume that not only $D$ but also $RDS$ include nearest-neighbour finite
differences (or equivalent):
\begin{equation}\label{eq:disl:nn}
  |u(\ell+\mA e_n) - u(\ell)| \leq |RDS u(\ell)|,
  \qquad \forall\, \ell\in\L, \quad  n\in\{1,2\}, \quad u : \L \to \R^3.
\end{equation}

The global energy (difference) functional is now defined by
\begin{equation}
  \label{eq:disl:Ediff}
  \E(u) := \sum_{\ell \in \L} \B(V\b( D\up(\ell)+Du(\ell) \b) -
  V\b(D\up(\ell)\b) \B) =: \sum_{\ell \in\L} V_\ell(Du(\ell)),
\end{equation}
where $V_\ell(\bfg) := V(D\up(\ell)+\bfg)-V(D\up(\ell))$.

\begin{lemma}
  \label{th:disl:extension_lemma}
  $\E : (\Usz \cap \Adm, \|\D \cdot \|_{L^2}) \to \R$ is
  continuous. In particular, there exists a unique continuous
  extension of $\E$ to $\Adm$, which we still denote by $\E$. The
  extended functional $\E \in C^k(\Adm)$ in the sense of Fr\'echet.
\end{lemma}
\begin{proof}[Idea of the proof]
  The main idea is the same as in the point defect case.  The proof
  that $\del\E(0) \in \UsHd$ is based on the construction of $\up$ in
  terms of the linear elasticity predictor $\ulin$, which guarantees
  that $\up$ is an ``approximate equilibrium'' in the far-field.
  See \cite{HudsonOrtner:disloc} for a similar proof applied in the simplified
  context of a screw dislocation. The complete proof (given in
  \S~\ref{sec:prf:E_disl}) for our general case requires a combination of the
  proof in \cite{HudsonOrtner:disloc} and the concept of elastic strain
  introduced in \S~\ref{sec:prf:elastic_strain}.
\end{proof}

The variational problem for the dislocation case is
\begin{equation}
  \label{eq:disl:atm}
  \ua \in \arg\min \b\{ \E(u) \bsep u \in \Adm \b\}.
\end{equation}
Since $\Adm$ is open, if a minimiser $\ua$ exists, then $\del\E(\ua) = 0$. We
call a minimiser strongly stable if, in addition, it satisfies the positivity
assumption \eqref{eq:pt:strongstabeq}.

\begin{remark}
  One can also formulate anti-plane models for pure screw dislocations by
  restricting $\Adm$ to displacements of the form $u = (0, 0, u_3)$ and also
  computing a predictor of the form $\ulin = (0, 0, (\ulin)_3)$. Note also that
  for pure screw dislocations, \eqref{eq:disl:defn_u0} is ignored. In the
  anti-plane case we may also choose $\Adm = \UsH$ since only slip-invariance in anti-plane direction is
    required, that is, the topology of the projected 2D lattice remains
    unchanged.

  To define in-plane models for pure edge dislocations one restricts
  $\Adm$ to displacements of the form $u = (u_1, u_2, 0)$. The
  predictor $\ulin$ does not simplify in this case.

  All our results carry over trivially to these simplified models.
\end{remark}

\begin{remark}
  \label{rem:reflection}
  The definition of the reference solution with branch-cut
  $\Gamma = \{ (x_1, \hat{x}_2) \sep x_1 \geq \hat{x}_1 \}$ was somewhat
  arbitrary, in that we could have equally chosen
  $\Gamma_S := \{ (x_1, \hat{x}_2) \sep x_1 \leq \hat{x}_1 \}$. In this case the
  predictor solution $\up$ would be replaced with $S_0 \up$. Let the resulting
  energy functional be denoted by
  \begin{displaymath}
    \E_S(v) := \sum_{\ell\in\L} V\b( DS_0\up(\ell)+Dv(\ell)\b) 
    - V\b( DS_0\up(\ell)\b).
  \end{displaymath}
  It is straightforward to see that, if $\del\E(\ua) = 0$, then
  $\del\E_S(S\ua) = 0$ as well. This observation means, that in certain
  arguments, an estimate on $\ua$ in the left half-space where no branch-cut is
  present immediately yields the corresponding estimate on $S\ua$ in the right
  half-space as well.
\end{remark}

\alert{
  \begin{remark}
    Another source of arbitrariness comes from the precise definition of the
    predictor $\up$, e.g., through the choice of the dislocation core position
    $\hat{x}$ or the choice of smearing function $\eta$. Indeed, more generally,
    arbitrary smooth modifications to $\up$ are allowed as long as they do not
    significantly change the far-field behaviour. While such changes to the
    predictor $\up$ affect the resulting corrector $\ua$ (the solution of
    \eqref{eq:disl:atm}, the {\em total displacement} $\up + \ua$ remains
    unchanged in the sense that, if $\up' = \up + w_0$ is a modified predictor,
    then $\ua' = \ua - w_0$ is again a solution of \eqref{eq:disl:atm}.
  \end{remark}
}

\subsection{Elastic strain}
\label{sec:prf:elastic_strain}
The transformation $\up \mapsto S_0\up$ produces a map that is smooth in
$\halfspace$, and which generates the same atomistic configuration. It is
therefore natural to define the {\em elastic strains}
\begin{equation}
  \label{eq:defn_elastic_strain}
 e(\ell) := (e_\rho(\ell))_{\rho \in \Rg} \quad \text{where} \quad
 e_\rho(\ell) 
 := \cases{
   R D_\rho S_0\up(\ell), & \ell \in \halfspace, \\
   D_\rho \up(\ell), & \text{otherwise.}
 }
\end{equation}
The analogous definition for the corrector displacement $u$ is
\begin{equation}
  \label{eq:defn_Dprime}
  \Del u(\ell) := 
  (\Del_\rho u(\ell))_{\rho \in \Rg} \quad \text{where}
  \quad
  \Del_\rho u(\ell) := \cases{
    R D_\rho Su(\ell), & \ell \in \halfspace, \\
    D_\rho u(\ell), & \text{otherwise.}
  }
\end{equation}

The slip invariance condition \eqref{eq:disl:slip_invariance} can now
be rewritten as
\begin{equation}
  \label{eq:disl:slip_invariance_D'_formulation}
  V\b(D(\up+u)(\ell)\b) = V\b(e(\ell)+\Del u(\ell)\b) \qquad
  \forall u \in \Adm, \ell \in \L.
\end{equation}
Linearity of $S$ and hence of $\Del$ implies
\begin{align}
  \label{eq:disl:slip_delV}
  \b\< \del V(D(\up+u)), Dv \b\> &= \b\< \del V(e+\Del u), \Del v \b\>, \\
  \label{eq:disl:slip_ddelV}
  \b\< \ddel V(D(\up+u)) Dv, Dw \b\> &= \b\< \ddel V(e+\Del u) \Del v, \Del w \b\>,
\end{align}
and so forth.

\subsection{Regularity}
\label{sec:dis:reg}
The regularity of the predictor $\up$ is already stated in Lemma
\ref{th:disl:up_lemma}. We now state the regularity of the corrector $\ua$.  It
is interesting to note that the regularity of the dislocation corrector $\ua$
is, up to log factors, identical to the regularity of the displacement field in
the point defect case, which indicates that the dislocation problem is
  computationally no more demanding than the point defect problem. Indeed,
  this will be confirmed in \S~\ref{sec:dis:clamped}.

\begin{theorem}
  \label{th:disl:regularity}
  Suppose that the lattice is stable \eqref{eq:pt:stab_lattice}.  Let
  $u \in \Adm$ be a critical point, $\del\E(u) = 0$, then there exist
  constants $C > 0, u_\infty \in \R^3$ such that, for $1 \leq j \leq
  k-2$ and for $|\ell|$ sufficiently large,
  \begin{equation}
    \label{eq:disl:regularity}
    |\Del^{j} u(\ell)| \leq C |\ell|^{-1-j} \log |\ell|
    \qquad \text{and} \qquad
    |u(\ell) - u_\infty| \leq C |\ell|^{-1} \log |\ell|.
  \end{equation}
\end{theorem}

\begin{remark}
  It can be immediately seen that the decay
  $|\Del u(\ell)| \lesssim |\ell|^{-2} \log |\ell|$ is equivalent to
  $|Du(\ell)| \lesssim |\ell|^{-2} \log |\ell|$. For higher-order derivatives,
  it is necessary to make a case distinction. While,
  $\Del^j u(\ell) = D^j u(\ell)$ at sufficient distance from $\Gamma$, ``close
  to'' the branchcut $\Gamma$ we could alternatively write
  $|D^j S u(\ell)| \lesssim |\ell|^{-1-j}$.

  In the pure screw case where $\burg_{12} = 0$ we simply have $D = \Del$.
\end{remark}

\subsection{Clamped boundary conditions}
\label{sec:dis:clamped}
To extend clamped boundary conditions to the dislocation problem, we prescribe
the displacement to be the predictor displacement outside some finite
computational domain $\Omega_R \subset \L$. Thus, we may think of these boundary
conditions as {\em asynchronous continuum linearised elasticity boundary
  conditions.}

This amounts to choosing a corrector displacement space analogous to
$\Us^0(\Omega_R)$ in the point defect case,
\begin{displaymath}
  \Adm^0(\Omega_R) := \b\{ v \in \Adm \bsep v = 0 \text{ in } \L
  \setminus \Omega_R \b\},
\end{displaymath}
and the associated finite-dimensional optimisation problem reads
\begin{equation}
  \label{eq:disl:clamped}
  u^0_R \in \arg\min \b\{ \E(u) \bsep u \in \Adm^0(\Omega_R) \b\}.
\end{equation}

\begin{theorem}
  \label{th:disl:clamped}
  Let $\ua$ be a strongly stable solution to \eqref{eq:disl:atm}, then
  there exist $C, R_0 > 0$ such that, for all $\Omega_R \subset \L$
  satisfying $B_R\cap\L \subset \Omega_R$ for some $R \geq R_0$, there
  exists a strongly stable solution $\ua^0_R$ of
  \eqref{eq:disl:clamped} satisfying
  \begin{equation}
    \label{eq:disl:clamped:errest}
    \| \D \ua - \D \ua^0_R \|_{L^2} \leq C R^{-1} \log(R) \qquad
    \text{and} \qquad \b| \E(\ua) - \E(\ua^0_R) \b| \leq C R^{-2}
    (\log R)^2.
  \end{equation}
\end{theorem}

\subsection{Periodic boundary conditions}
\label{sec:dis:per}
It is possible to extend periodic boundary conditions to the dislocation case by
considering a periodic array of dislocations with alternating signs. In practise
the computational domain then contains a dipole or a quadrupole. It then becomes
necessary to estimate image effects, for which our regularity results are still
useful, but which requires substantial additional work. Hence, we postpone the
analysis of periodic boundary conditions for dislocation to future work, but
refer to \cite{CaiBulChaLiYip2003} for an interesting discussion of these
issues.

\subsection{Boundary conditions from linear elasticity}
\label{sec:dis:lin}
We now extend the lattice linear elasticity boundary conditions to the
dislocation case.  The linearisation argument \eqref{eq:pt:Vlin}
should now be carried out for the full displacement $w = \up + u$, and
reads
\begin{displaymath}
  V(Dw) \approx V(0) + \< \del V(0), D w \> + \smfrac12 \< \ddel V(0)
  Dw, Dw \>,
\end{displaymath}
but this is invalid whenever the interaction neighbourhood
crosses the slip plane $\Gamma$.

Instead, we must first transform the finite difference stencils as follows:
recall the definition of $\halfspace$ from \eqref{eq:disl:defn_halfspace} and
the definition of elastic strain $e$ and $\Del u$ from
\eqref{eq:defn_elastic_strain} and \eqref{eq:defn_Dprime}, then we define
\begin{equation}
  \label{eq:defn_Del}
  \Del_0 w(\ell) = \Del_0(\up+u)(\ell) := e(\ell)+\Del u(\ell).
\end{equation}
According to Lemma \ref{th:disl:up_lemma} and Theorem \ref{th:disl:regularity},
if $u = \ua$, then $|\Del_0 w(\ell)| = O(|\ell|^{-1})$, hence we may linearize
with respect to this transformed finite different stencil. Using the slip
invariance condition~\eqref{eq:disl:slip_invariance}, we obtain
\begin{displaymath}
  V(Dw) = V(\Del_0  w) = V(0) + \< \del V(0), \Del_0 w \> + \smfrac12 \< \ddel V(0)
  \Del_0 w, \Del_0 w \> + O(|\Del_0 w|^3),
\end{displaymath}
and we therefore define the energy difference functional
\begin{align*}
  \E^\lin_R(u) &:=
  \sum_{\ell \in \Omega_R}
       V_\ell(Du(\ell))
  +
  \sum_{\ell \in \L \setminus \Omega_R} 
    \B( V^\lin\b(e(\ell)  + \Del u(\ell)\b) 
        - V^\lin\b(e(\ell)\b)\B)
  ,
\end{align*}
where $V^\lin$ is the same as in the point defect case,
\begin{align*}
   V^\lin(\bfg) &:= V(\bfO) + \b\< \del V(\bfO),
    \bfg \b\> + \smfrac{1}{2} \b\< \ddel V(\bfO) \bfg, \bfg \b\>
\end{align*}
and where $\Omega_R \subset \L$ is the ``inner'' computational domain.
It follows from minor modifications of the proof of
Lemma \ref{th:disl:extension_lemma} that $\E^\lin_R$ can be extended by
continuity to a functional $\E^\lin_R \in C^k(\Adm)$.

Thus, we aim to compute
\begin{equation}
  \label{eq:dis:lin}
  \ulin_R \in \arg\min \b\{ \E^\lin_R(u) \bsep u \in \Adm \b\}.
\end{equation}

\begin{theorem}
  \label{th:dis:lin}
  Let $\ua$ be a strongly stable solution to \eqref{eq:disl:atm}, then
  there exist $C, R_0 > 0$ such that for all domains $\Omega_R \subset
  \L$ with $B_R \cap \L \subset \Omega_R$ and $R \geq R_0$, there exists a
  strongly stable solution of \eqref{eq:dis:lin} satisfying
  \begin{equation}
    \label{eq:dis:lin:errest}
     \b\|\D\ua - \D\ulin_R \b\|_{L^2} \leq C R^{-1} \qquad
     \text{and} \qquad  \b|\E^\lin_R(\ulin_R) - \E(\ua)\b| \leq C R^{-2}
     \log R.
  \end{equation}
\end{theorem}

\begin{proof}[Idea of proof]
  The proof is similar to the point defect case, the main additional step to
  take into account being that the linearisation is with respect to the full
  displacement $\up + \ua$. Since $\D\up \sim |x|^{-1}$ it therefore follows
  that the linearisation error at site $\ell$ is only of order $O(|\ell|^{-2})$,
  while in the point defect case it was of order $O(|\ell|^{-2d})$. This
  accounts for the reduced convergence rate.
\end{proof}

\begin{remark}\label{rem_lin_elast}
  1. The key difference between the schemes \eqref{eq:disl:clamped} and
  \eqref{eq:dis:lin} is that the former employs a {\em precomputed continuum
    linear elasticity boundary condition} while the latter computes a {\em
    lattice linear elasticity boundary condition on the fly}.
  %
  It is therefore interesting to note that, for dislocations, solving the
  relatively complex exterior problem yields almost no qualitative improvement
  over the basic Dirichlet scheme \eqref{eq:disl:clamped}. Indeed, if the cost
  of solving the exterior problem is taken into account as well, then the scheme
  \eqref{eq:dis:lin} may in practice become more expensive than
  \eqref{eq:disl:clamped}.

  The main advantage of \eqref{eq:dis:lin} appears to be that the boundary
  condition need not be computed beforehand, but could be computed ``on the
  fly''. We speculate that this can give a substantially improved prefactor when
  the dislocation core is spread out, e.g., in the case of partials.

  2. If, instead of linearising about the homogeneous lattice configuration we
  were to linearise about the predictor $\up$, then the rate of convergence for
  dislocations would become the same (up to log factors) as for point
  defects. However, since lattice Green's function and similar techniques are no
  longer available we cannot conceive of an efficient implementation of such a
  scheme without reverting again to complex atomistic/continuum type
  coarse-graining techniques.
\end{remark}

\subsection{Boundary conditions from nonlinear elasticity for screw
  dislocations}
\label{sec:dis:ac}
The formulation of a/c coupling methods for general dislocations is not
straightforward. We therefore consider only the case of {\em pure screw
  dislocations} and postpone the general case to future work. Thus, we assume
that $\burg = e_3$, and in this case, only the invariance of $V$ in the normal
direction is relevant:
\begin{displaymath}
  V\b( \bfg + \bfh e_3 \b) = V(\bfg) \qquad \forall \bfg \in (\R^3)^\Rg,
  \bfh \in \Z^\Rg.
\end{displaymath}

We set up the computational domain and approximation space as in
\S~\ref{sec:pt:ac}. To define the energy functional, we first
construct a modified interpolant that takes into account the
discontinuity of the full displacement across the slip plane,
similarly to the elastic strain used in \S~\ref{sec:dis:lin},
\begin{displaymath}
  I_R^{\rm el} u(x) := \cases{
    I_R u(x), & x \in T, T \cap \Gamma = \emptyset, \\
    I_R (u+ \burg \chi_{x_2 < \hat{x}_2})(x), & x \in T, T \cap \Gamma
    \neq \emptyset,
  }
  ,
\end{displaymath}
where $I_R$ is the nodal interpolation with respect to $\T_R$.
With this definition, the energy difference functional is given by
\begin{align}
  \label{eq:ac:defn_E}
  \E^\ac_R(u) &:= \sum_{\ell \in \Omega^\a_R} V_\ell(Du(\ell))
  + \sum_{\ell \in \Omega^\i_R} V_\ell^\i(Du(\ell))  \\
  \notag
  &\qquad \qquad + \sum_{T \in \T_R} v_T^{\rm eff} \B( W(\D
  I^{\rm el}_R (\up+u)) -
  W(\D I^{\rm el}_R \up) \B),
\end{align}
where $V_\ell^\i, W, v_T^{\rm eff}$ are defined as in
\S~\ref{sec:pt:ac}.

We seek to compute
\begin{align}
  \label{eq:dis:ac}
  &u^\ac_R \in \arg\min \b\{ \E^\ac_R(u) \bsep u \in \Adm(\T_R) \b\}, \qquad
  \text{where} \\
  \notag
  &\Adm(\T_R) := \Adm \cap \Us^0(\T_R).
\end{align}
We again let $R$ and $R_\c$ be the sizes of $\Omega^\a_R$ and $\omega_R$,
\begin{equation}\label{eq:ac:dis:radii}
	B_R \cap \L \subset \Omega^\a_R
\qquad\text{and}\qquad
	B_{R_\c}
    \subset
    \omega_R
    \subset
    B_{c_0 R_\c}
   .
\end{equation}
and assume that there exists $\eta>0$ and $c_1>0$ such that there exists a strongly stable solution $\uac_R$ to \eqref{eq:dis:ac} satisfying
\begin{equation}
  \label{eq:ac:dis:est}
  \|\D \uac_R - \D \ua\| \leq c_1 \b(\| h
  \Del^2 (\up+\ua) \|_{\ell^2(\L\cap(\omega_R\setminus B_R))}
  +
  \|\Del \ua\|_{\ell^2(\L \setminus B_{R_\c/2})}\b)
  ,
\end{equation}
provided that $\|h \Del^2(\up+\ua) \|_{\ell^2(\L\cap(\omega_R\setminus B_R))}
  +
  \|\Del \ua\|_{\ell^2(\L \setminus B_{R_\c/2})} \leq \eta$.

\begin{proposition}
  \label{th:dis:ac}
  Let $\ua$ be a strongly stable solution of \eqref{eq:disl:atm} and assume that
  \eqref{eq:ac:dis:radii} and \eqref{eq:ac:dis:est} hold.  Further we require
  that $\omega_R$ and $\T_R$ satisfy the following quasi-optimality conditions:
  \begin{align}
    \label{eq:dis:restriction_Rc+mesh}
    &c_2 R^{p} \leq R_\c \leq c_3 R^{p}, \text{ for some $p > 0$, \qquad and}
    \qquad
    |h(x)| \leq c_4 \smfrac{|x|}{R}.
  \end{align}
  Then there exist $R_0, C$ depending on $\eta$, $c_2$, $c_3$, $c_4$, and $p$,
  such that for all $R \geq R_0$ there exists a strongly stable solution
  $\uac_R$ to \eqref{eq:dis:ac} satisfying
  \begin{align}
    \label{eq:dis:ac:rate}
    \b\| \D\uac_R - \D\ua \b\|_{L^2} &\leq C R^{-1}.
  \end{align}
\end{proposition}


\subsection{Numerical results}
\label{sec:dis:num}

\subsubsection{Setup}
We consider the anti-plane deformation model of a screw dislocation in a BCC
crystal from \cite{HudsonOrtner:disloc}, the main difference being that we admit
nearest neighbour many-body interactions instead of only pair
interactions. Thus, we only give a brief outline of the model setup. The choice
of dislocation type is motivated by the fact that the linearised elasticity
solution is readily available.

Briefly, let $\mB \Z^3 = \Z^3 \cup (\Z^3 + (1/2,1/2,1/2)^T)$ denote a BCC
crystal, then both the dislocation core and Burgers vector point in the
$(1,1,1)^T$ direction. Upon rotating and possibly dilating, the projection $\mA
\Z^2$ of the BCC crystal is a triangular lattice, hence we again assume
\eqref{eq:num:pt:A}. The linear elasticity predictor is now given by $u^\lin(x) = \smfrac{1}{2\pi} {\rm arg}(x - \hat{x})$, where we assumed that the
Burgers vector is $b = (0, 0, 1)^T$ and $\hat{x}$ is the centre of the
dislocation core. We shall slightly generalise this, by admitting
\begin{displaymath}
  u^\lin(x) = \mF \cdot (x - \hat{x}) + \smfrac{1}{2\pi} {\rm arg}(x - \hat{x}),
\end{displaymath}
which is equivalent to applying a shear deformation of the form
\begin{displaymath}
  \left( \begin{array}{ccc} 1 & 0 & 0 \\ 0 & 1 & 0 \\ \mF_1 & \mF_2 &
      1 \end{array} \right)
\end{displaymath}
to the rotated BCC crystal and is thus still included within our framework
through a modification of the potential $V$.

Let the unknown for the anti-plane model, the displacement in $e_3$
direction, be denoted by $z(\ell) := y_3(\ell)$, then we use the
EAM-type site potential
\begin{align*}
  & V(Dy(\ell)) = V^{\rm anti}(Dz(\ell)) = \sum_{\rho \in \Nhd_\ell} \phi\b(|D_\rho z(\ell)|\b) +
  G\B( \sum_{\rho \in \Nhd_\ell} \psi\b(|D_\rho z(\ell)|\b) \B) \\
  & \text{with } \phi(r) = \psi(r) = \sin^2(\pi r) \quad \text{ and }
  G(s) = \smfrac12 s^2.
\end{align*}
The $1$-periodicity of $\phi, \psi$ emulates the fact that displacing
a line of atoms by a full Burgers vector leaves the energy invariant.

We apply again the remaining remarks in \S~\ref{sec:num:pt:setup}.

\subsubsection{Discussion of results}
\label{sec:num:dis:results}
We consider three numerical experiments:
\begin{enumerate}
\item $\mF = (0, 0)^T, x_0 = (1/3, 1/(2\sqrt{3}))^T$: \\[1mm]
  $ $ \quad The results are shown in Figure \ref{fig:screw1_err}. We
  observe precisely the predicted rates of convergence. However, it is
  worth noting that although the asymptotic rates for ATM, LIN and AC
  are identical (up to log-factors), the prefactor varies by an order
  of magnitude.

  The ``dip'' in the energy error for the LIN method is likely due to
  a change in sign of the error.

\item $\mF = (0, 0)^T, x_0 = \smfrac12 (1, 1/\sqrt{3})^T$ is the
  centre of a triangle: \\[1mm]
  $ $ \quad The results are shown in Figure \ref{fig:screw2_err}. In
  this case, the AC method exhibits the predicted convergence rate,
  while both the ATM and LIN methods show subtantially better
  rates. The explanation for ATM (but not for LIN) is that the
  solution displacement $\ua$ has three-fold symmetry, from which one
  can {\em formally} deduce the improved decay estimate $|D\ua(\ell)|
  \leq C |\ell|^{-4}$. This readily implies the observed rate.

  This test demonstrates that, in general, our estimates are only
  upper bounds, but that in special circumstances (e.g., additional
  symmetries), better rates can be obtained. It is moreover
  interesting to note that the most basic scheme, ATM, is the most
  accurate with this setup.

\item $\mF = (10^{-3}, 3 \times 10^{-4})^T, x_0 = \smfrac12 (1,
  1/\sqrt{3})^T$: \\[1mm]
  $ $ \quad The results are shown in Figure \ref{fig:screw3_err}. In this final
  test, we chose $\mF$ to push the dislocation core close to instability. We
  included this test to demonstrate that one cannot always expect the clean
  convergence rates displayed in the point defect tests, or in the first screw
  dislocation test, but that there may be significant pre-asymptotic regimes.
\end{enumerate}

\begin{figure}
  \begin{center}
    \includegraphics[height=5cm]{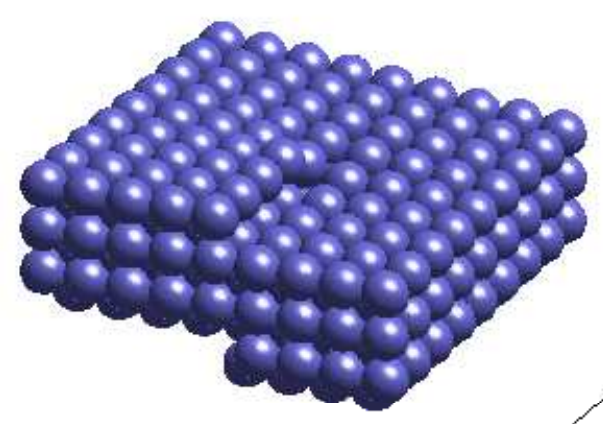}
  \end{center}
  \caption{\label{fig:screw} Illustration of a screw dislocation
    configuration in a BCC crystal.}
\end{figure}

\begin{figure}
  \begin{center}
    \includegraphics[height=6.8cm]{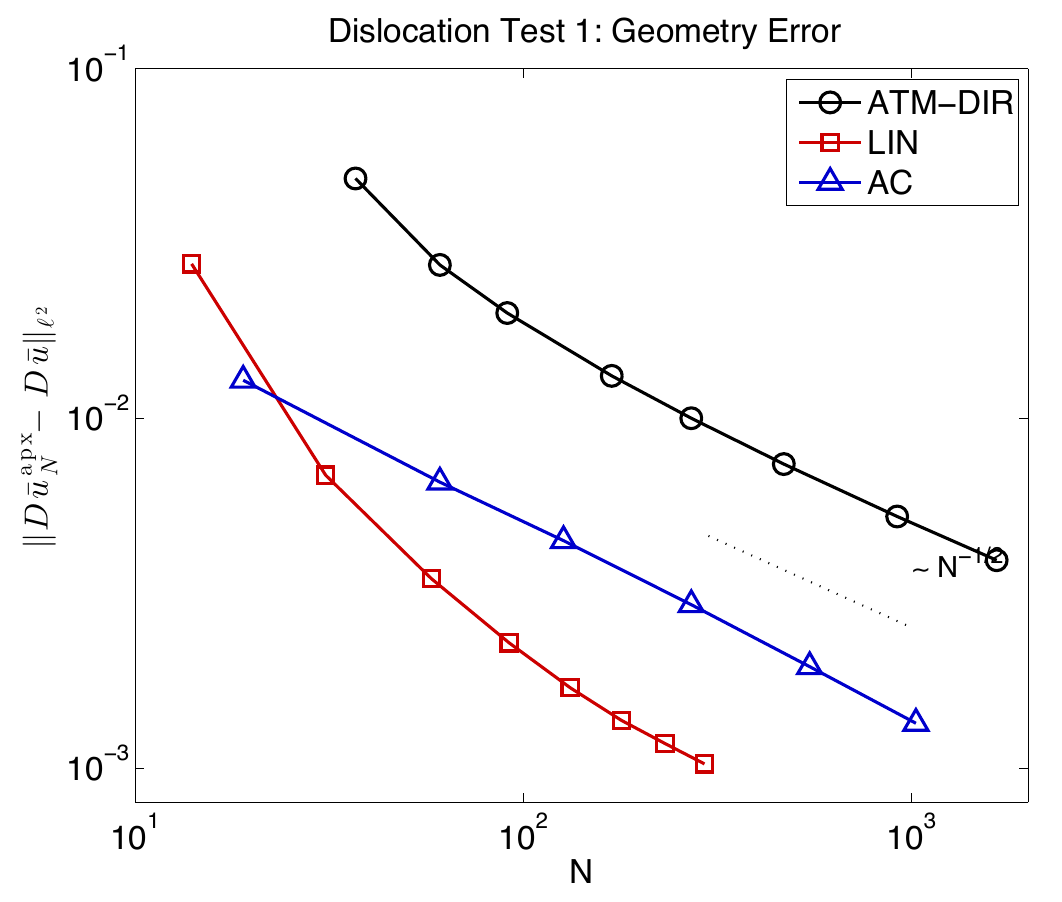}
    \includegraphics[height=6.8cm]{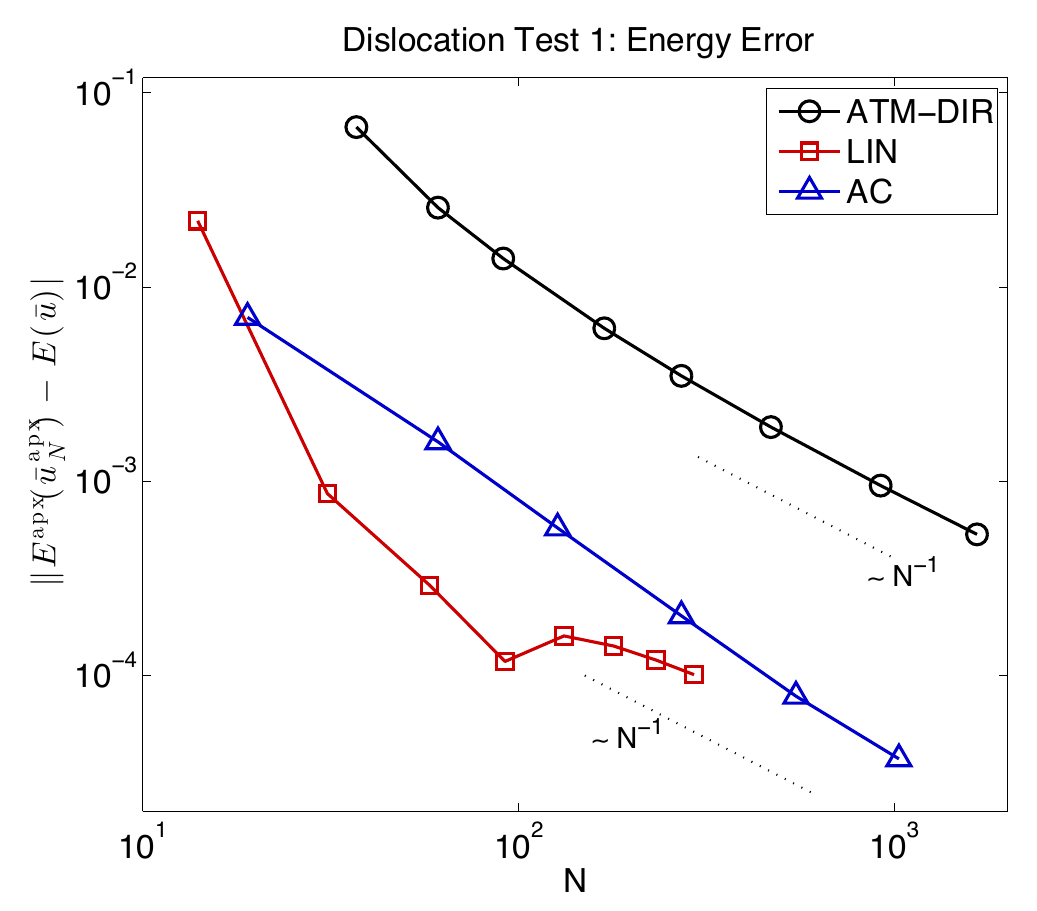} \\
    \footnotesize (a) \hspace{7cm} (b)
  \end{center}
  \caption{\label{fig:screw1_err}Rates of convergence, in the {\em
      first} dislocation test, of the ATM-DIR, LIN and AC methods. $N$
    denotes the number of atoms in the {\em inner computational
      domain}; see \S~\ref{sec:num:pt:setup} for definitions.}
\end{figure}

\begin{figure}
  \begin{center}
    \includegraphics[height=6.8cm]{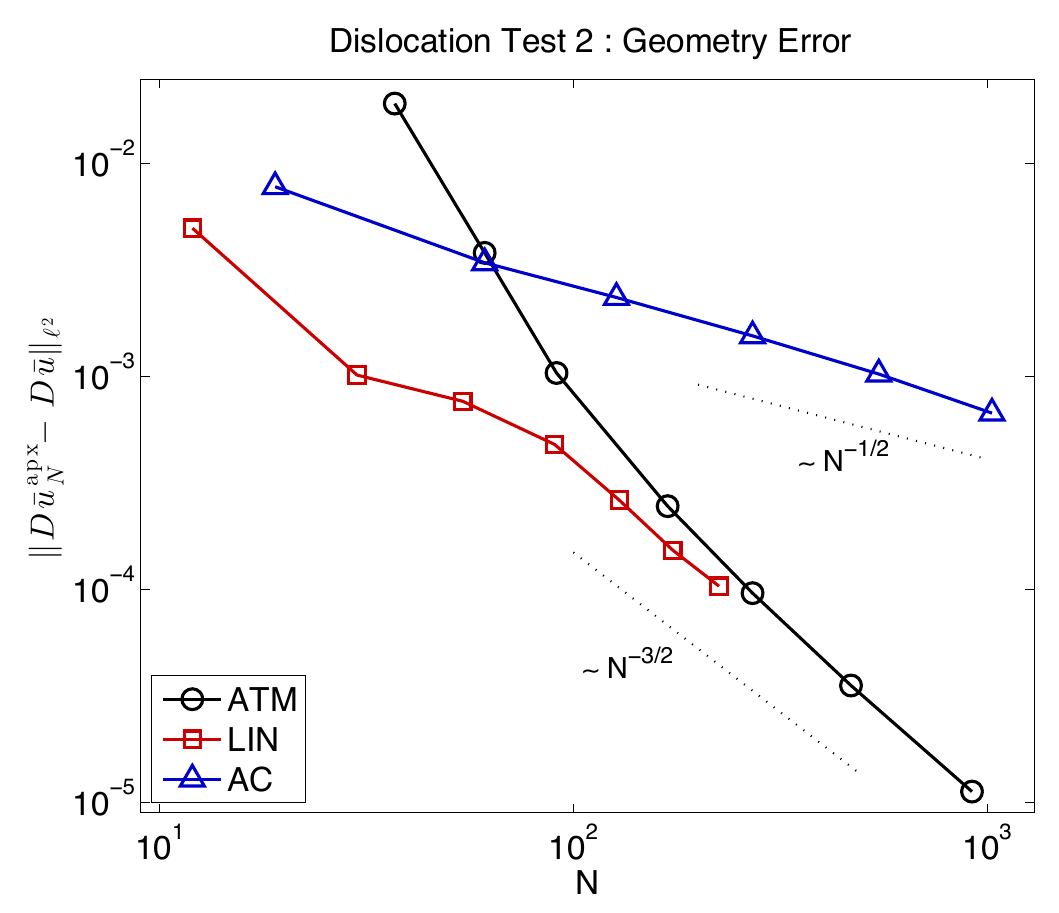}
    \includegraphics[height=6.8cm]{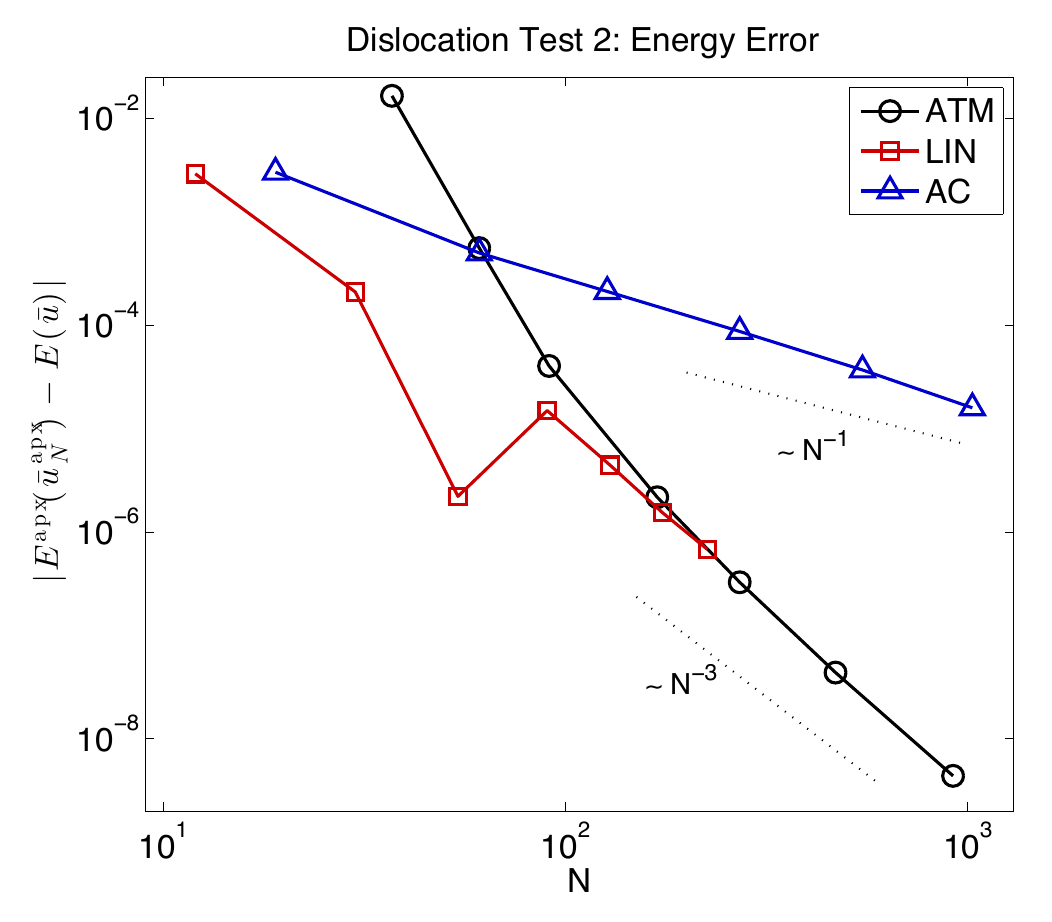} \\
    \footnotesize (a) \hspace{7cm} (b)
  \end{center}
  \caption{\label{fig:screw2_err}Rates of convergence, in the {\em
      second} dislocation test, of the ATM-DIR, LIN and AC
    methods. $N$ denotes the number of atoms in the {\em inner
      computational domain}; see \S~\ref{sec:num:pt:setup} for
    definitions.}
\end{figure}

\begin{figure}
  \begin{center}
    \includegraphics[height=6.8cm]{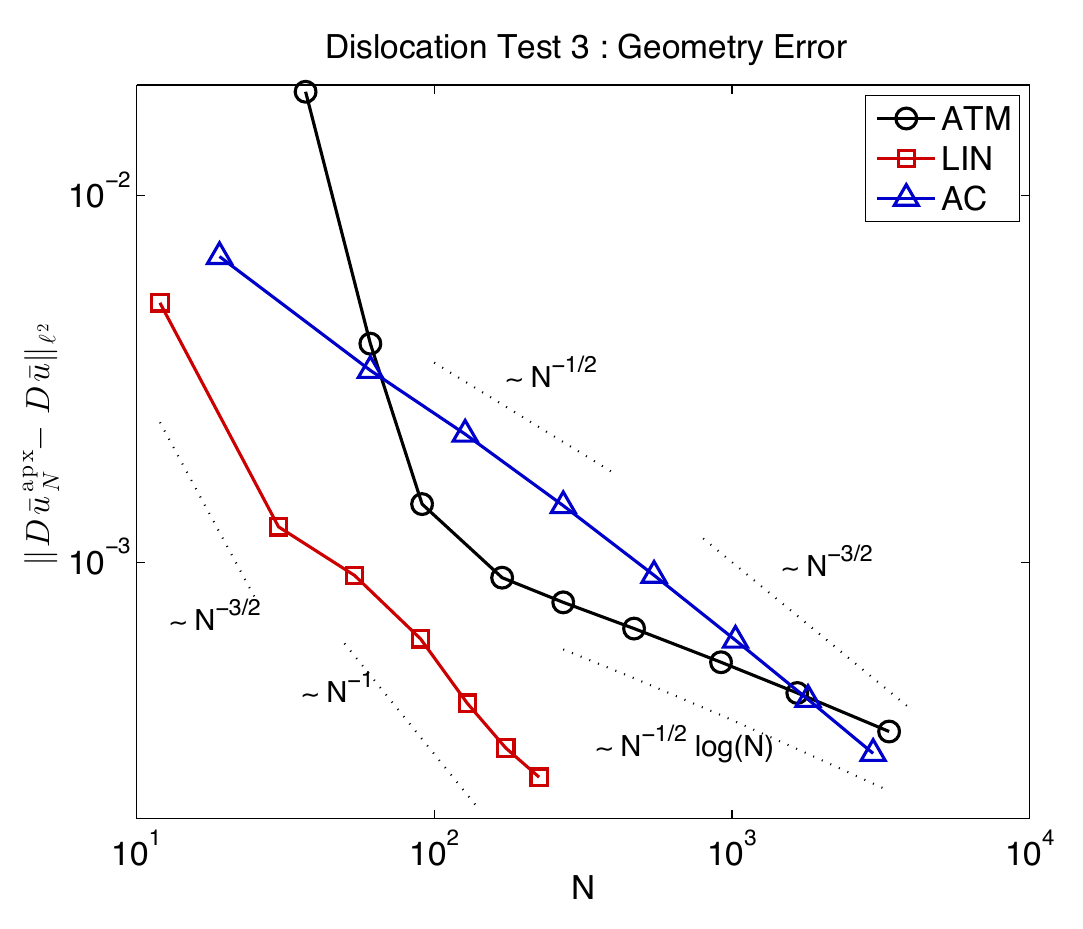}
    \includegraphics[height=6.8cm]{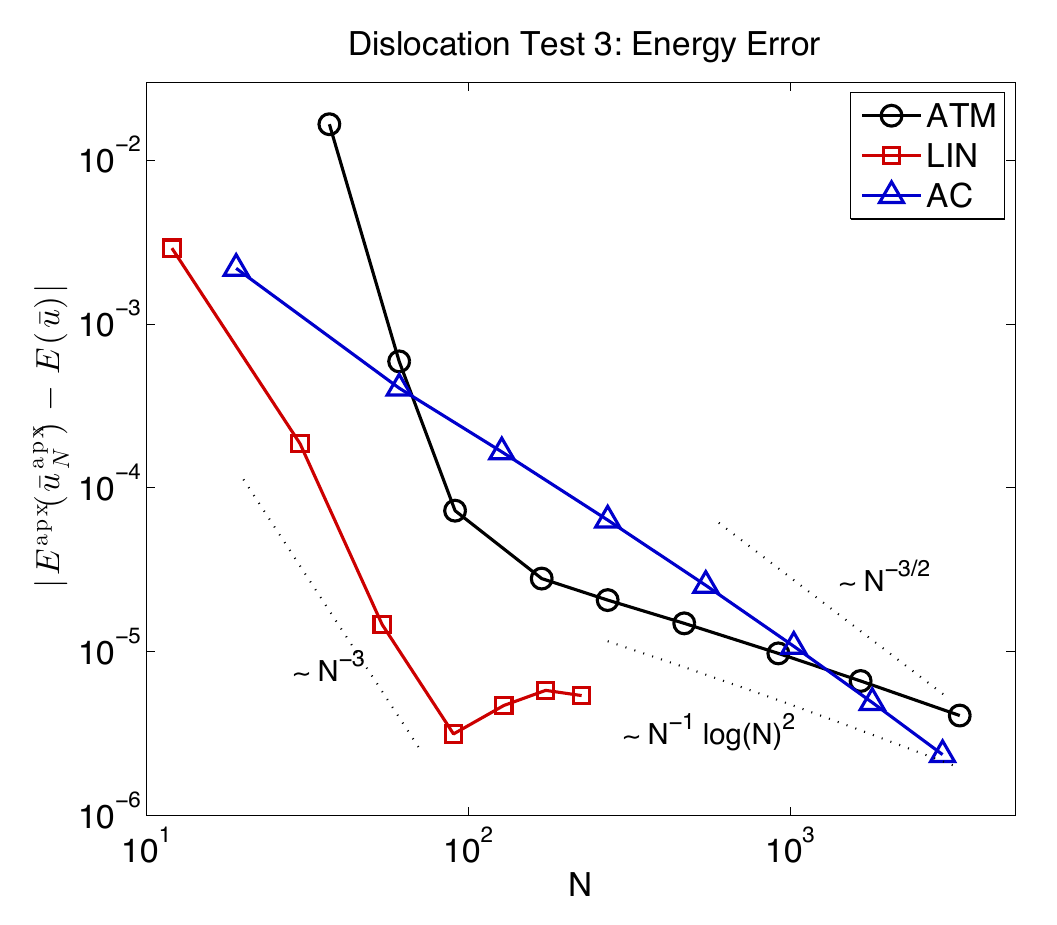} \\
    \footnotesize (a) \hspace{7cm} (b)
  \end{center}
  \caption{\label{fig:screw3_err}Rates of convergence, in the {\em
      third} dislocation test, of the ATM-DIR, LIN and AC methods. $N$
    denotes the number of atoms in the {\em inner computational
      domain}; see \S~\ref{sec:num:pt:setup} for definitions.}
\end{figure}

\section{Conclusion}
\label{sec:discussion}
We have introduced a flexible analytical framework to study the effect of
embedding a defect in an infinite crystalline environment. Our main analytical
results are (1) the formulation of equilibration as a variational problem in a
discrete energy space; and (2) a qualitatively sharp regularity theory for
minimisers.

These results are generally useful for the analysis of crystalline defects,
however, our own primary motivation was to provide a foundation for the analysis
of atomistic multi-scale simulation methods, which in this context can be
thought of as different means to produce boundary conditions for an atomistic
defect core simulation. To demonstrate the applicability of our framework we
analyzed simple variants of some of the most commonly employed schemes:
Dirichlet boundary conditions, periodic boundary conditions, far-field
approximation via linearised lattice elasticity and via nonlinear continuum
elasticity (Cauchy--Born, atomistic-to-continuum coupling).
%
%
In parallel works \cite{OrtnerZhang2014,LiOrtnerShapeevEtAl-blended,
  2013-PRE-bqcfcomp,ChenOrtner-qm_mm} this framework has
already been exploited resulting in new and improved formulations of
atomistic/continuum and quantum/atomistic coupling schemes.

There are numerous practical and theoretical questions that we have left open in
the present work, some of which we commented on throughout the article. Possibly
the key ``bottleneck'' in our analysis is that it only provides a rate of
convergence, i.e.,
\begin{displaymath}
  \text{error} \leq C N^{-r},
\end{displaymath}
where $N$ is the number of unknowns in the approximate problem, however, we have
not been able to provide estimates on the prefactor. We speculate that such
estimates may not be obtained {\it a priori} but only {\it a posteriori}, as
  it requires considerably more detailed information about a defects structure
  and stability than one would normally assume {\it a priori}.

\section{Proofs: The Energy Difference Functionals}
\label{sec:prf_E}
This section is concerned with proofs for Lemma \ref{th:extension_lemma} and
Lemma \ref{th:disl:extension_lemma} which state that the energy $\E$ can be
understood as a smooth functional on the energy space, i.e., $\E \in C^k(\UsH)$
in the point defect case and $\E \in C^k(\Adm)$ in the dislocation case.

\subsection{Conversion to divergence form}
We begin by establishing an auxiliary result that allows us to convert pointwise
forces into divergence form without sacrificing fundamental decay
properties.

\begin{lemma}
  \label{th:divg_f}
  Let $d \in \N$, $p > d \geq 2$ and $f : \Z^d \to \R$ such that
  $|f(\ell)| \leq C_f |\ell|^{-p}$ for all $\ell \in \Z^d$. Suppose,
  in addition, that $\sum_{\ell \in \Z^d} f(\ell) = 0$. Then, there
  exists $g : \Z^d \to \R^d$ and a constant $C$ depending only on $p$
  and $d$ such that
  \begin{equation}
    \label{eq:divg_f}
    \sum_{j = 1}^d D_{e_j} g_j(\ell) = f(\ell) \quad \text{and}
    \quad |g(\ell)| \leq CC_f |\ell|^{-p+1} \quad \text{for all
    } \ell \in \Z^d.
  \end{equation}
  If $f$ has compact support, then $g$ can be chosen to have compact
  support as well.
\end{lemma}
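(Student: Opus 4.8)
The plan is to prove the lemma by induction on the dimension $d$, building $g$ explicitly by eliminating one coordinate at a time. The one analytic fact driving everything is that a one-sided partial sum of a one-dimensional \emph{mean-zero} sequence decays at \emph{both} ends: if $a:\Z\to\R$ has $\sum_m a(m)=0$ and $|a(m)|\le C(1+|m|)^{-p}$ with $p>1$, then $b(\ell):=-\sum_{m>\ell}a(m)=\sum_{m\le\ell}a(m)$ satisfies $D_{e_1}b=a$ and $|b(\ell)|\lesssim(1+|\ell|)^{-p+1}$, the decay as $\ell\to+\infty$ coming from the tail and the decay as $\ell\to-\infty$ from the mean-zero identity. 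This settles the base case $d=1$, and the same two-sided-tail estimate is reused at every step.

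For the step $d-1\to d$ write $\ell=(\ell',\ell_d)$ and set $\phi(\ell'):=\sum_{\ell_d}f(\ell',\ell_d)$; summing out one variable produces a mean-zero function on $\Z^{d-1}$ with $|\phi(\ell')|\lesssim(1+|\ell'|)^{-p+1}$, and since $p-1>d-1$ the induction hypothesis applies to $\phi$. I first eliminate the $\ell_d$-direction: subtracting a fixed unit-mass profile $\theta$ in $\ell_d$, the sequence $f(\ell',\cdot)-\phi(\ell')\theta$ has vanishing slice-sum, so the corrected two-sided tail sum $g_d(\ell):=-\sum_{m>\ell_d}\bigl(f(\ell',m)-\phi(\ell')\theta(m)\bigr)$ obeys $D_{e_d}g_d=f-\phi(\ell')\theta(\ell_d)$ and, by the one-dimensional estimate applied slicewise together with $|\phi|\lesssim(1+|\ell'|)^{-p+1}$, attains the sharp bound $|g_d(\ell)|\lesssim(1+|\ell|)^{-p+1}$. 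This reduces matters to solving the divergence equation for the residual hyperplane charge $f_2(\ell):=\phi(\ell')\theta(\ell_d)$, for which the induction furnishes an in-hyperplane flux $H:\Z^{d-1}\to\R^{d-1}$ with $\sum_{j<d}D_{e_j}H_j=\phi$ and $|H(\ell')|\lesssim(1+|\ell'|)^{-p+2}$.

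The main obstacle is precisely lifting $H$ back to $\Z^d$ with the \emph{sharp} power. Placing $H$ on the single hyperplane $\{\ell_d=0\}$ solves the residual equation but only yields decay $(1+|\ell|)^{-p+2}$, losing one power against the target $(1+|\ell|)^{-p+1}$, and this loss is unavoidable for any flux concentrated near a hyperplane. To recover it I plan to diffuse $H$ off the hyperplane, replacing the Kronecker profile by a profile in $\ell_d$ of thickness comparable to $|\ell'|$, so that the normalising factor $\sim|\ell'|^{-1}$ of the spread profile upgrades $|\ell'|^{-p+2}$ to the required $|\ell'|^{-p+1}$, while a compensating $e_d$-flux (built, as above, from $\phi$, which already has the good decay) keeps the divergence correct. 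Because the spreading width depends on $\ell'$, applying $D_{e_j}$ for $j<d$ differentiates the profile itself; a short computation shows the resulting cross-terms assemble into a new mean-zero source of the \emph{same} decay class $(1+|\ell|)^{-p}$, supported in the conical region carrying the slab, so the residual equation can be attacked by iterating the construction. Making this iteration converge---that is, showing the induced solution operator acts contractively on the relevant weighted space, which is where the hypothesis $p>d$ is decisive---is the technical heart of the argument.

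Finally, compact support is preserved automatically: if $f$ is compactly supported then so is each $\phi$, every two-sided tail sum terminates, the diffusion slabs are finite, and the iteration stabilises after finitely many steps, so the assembled $g$ has compact support. I would also note that a semidiscrete Fourier multiplier, namely $\hat g_j(k)=(e^{ik_j}-1)\,\sigma(k)^{-1}\hat f(k)$ with $\sigma(k)=\sum_{l=1}^d 4\sin^2(k_l/2)$, formally inverts the discrete divergence using $\hat f(0)=0$; but extracting pointwise decay rates from it, and in particular preserving compact support, is far less transparent than the direct construction above, which is why I favour the inductive route.
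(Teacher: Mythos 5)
Your first two reductions are sound: the slicewise tail sum in the $e_d$ direction does produce a $g_d$ with the sharp bound $(1+|\ell|)^{-p+1}$, the summed-out charge $\phi$ is mean-zero with $|\phi(\ell')|\lesssim C_f(1+|\ell'|)^{-p+1}$ so the induction hypothesis applies (since $p-1>d-1$), and you correctly diagnose the one-power loss in lifting the $(d-1)$-dimensional flux $H$ off the hyperplane. The genuine gap is the last step. After diffusing $H$ over a slab of width $\sim|\ell'|$, the cross-terms form a new mean-zero source bounded by $K(p,d)\,C_f(1+|\ell|)^{-p}$, where $K$ collects the constant from the inductive bound on $H$ and the profile increments; nothing in your construction makes $K<1$. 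Iterating therefore multiplies the constant by $K$ at every pass, and the series of corrections has no reason to converge. Your assertion that the hypothesis $p>d$ is "decisive" for contractivity is unsupported: contraction needs a mechanism that actually shrinks the weighted norm, and your construction operates at a fixed spatial scale. This is exactly what the paper's proof has and yours lacks: there, the operator $\mathcal{C}_d$ transports the source from $\ell$ to $3\ell$ (a triadic rescaling) while depositing a flux increment, and it is this change of scale that produces the contraction factor $3^{d-p}<1$ --- the one place where $p>d$ enters. (A secondary issue: for your cross-terms to lie in the class $(1+|\ell|)^{-p}$ \emph{pointwise} you must take a Lipschitz profile of width $|\ell'|$, so that $|\theta_{\ell'+e_j}-\theta_{\ell'}|\lesssim|\ell'|^{-2}$ everywhere; a sharp characteristic-function profile produces boundary layers of size $|H|\,|\ell'|^{-1}\sim|\ell'|^{-p+1}$, which are not in that class.)

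The gap is, however, closable inside your own framework and \emph{without any iteration}, by an observation you are one line away from. Your residual $r(\ell)=\sum_{j<d}H_j(\ell'+e_j)\bigl[\theta_{\ell'+e_j}(\ell_d)-\theta_{\ell'}(\ell_d)\bigr]$ sums to zero over \emph{every} line $\{\ell'\}\times\Z$, because both profiles carry unit mass. Hence it is removed exactly by one more slicewise tail sum in the $e_d$ direction: $c_d(\ell',\ell_d):=\sum_{m\geq\ell_d}r(\ell',m)$ satisfies $D_{e_d}c_d=-r$, vanishes outside the slab $|\ell_d|\lesssim|\ell'|$ (above the slab the tail is empty, below it equals the full, vanishing, sum), and on the slab is bounded by the width times $\max|r|$, i.e. $|\ell'|\cdot K C_f(1+|\ell'|)^{-p}\lesssim C_f(1+|\ell|)^{-p+1}$ --- the sharp rate. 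This terminates the construction in one pass, and compact support is visibly preserved. With that repair your dimensional induction becomes a correct and genuinely different alternative to the paper's rescaling argument; as written, though, the convergence of your iteration is an essential missing step, not a routine one.
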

\begin{proof}
  Denote $\bar\ell := (\ell_1, \dots, \ell_{d-1})^T$.  We define the
  operator $\mathcal{C}_d (f, g) := (\tilde{f}, \tilde{g})$, where
  $\tilde{g} := g + \Delta\tilde{g} \, e_d$,
  \begin{align*}
    \Delta\tilde{g}(\bar\ell,\ell_d) :=
    \cases{ \sum_{\lambda =
        \ell_d}^{3\ell_d-2} f(\bar\ell, \lambda), & \ell_d \geq 1, \\
      - \sum_{\lambda = 3 \ell_d-1}^{\ell_d-1} f(\bar\ell, \lambda), &
      \ell_d \leq 0, } \quad \text{and} \quad \tilde{f}(\bar\ell,\ell_d)
    := \sum_{\lambda=3\ell_d-1}^{3\ell_d+1} f(\bar\ell,\lambda).
  \end{align*}
  One can then readily verify that
  \begin{equation} \label{eq:divg_f:12}
    D_{e_d} g_d(\ell) - f(\ell) = D_{e_d} \tilde{g}_d(\ell) -
    \tilde{f}(\ell) \qquad \forall \ell \in \Z^d.
  \end{equation}
  Moreover it is easy to see from the definition that $\sum_{\ell \in
      \Z^d} \tilde{f}(\ell) = \sum_{\ell \in
          \Z^d} f(\ell) = 0$.

  Let the operators $\mathcal{C}_1, \dots, \mathcal{C}_{d-1}$ be
  defined analogously and let $\mathcal{C}$ be their composition
  $\mathcal{C} := \mathcal{C}_1 \circ \dots \circ \mathcal{C}_d$. If $(f^+,
  g^+) = \mathcal{C}(f, g)$, then from \eqref{eq:divg_f:12} we obtain that
  \begin{equation}
    \label{eq:divg_f:15}
    f^+(\ell) - \sum_{j = 1}^d D_{e_j} g^+_j(\ell) = f(\ell) - \sum_{j =
      1}^d D_{e_j} g_j(\ell).
  \end{equation}

  Define the seminorm $[f]_q := \sup_{\ell\in \Z^d\setminus \{0\}} (|\ell|_\infty-\smfrac12)^{q} |f(\ell)|$, and a norm $\llbracket g\rrbracket_q := \sup_{\ell\in \Z^d} (|\ell|_\infty+\smfrac12)^{q} |g(\ell)|$. We
  claim that, if $(f^+, g^+) = \mathcal{C}(f, g)$, then
  \begin{equation}
    \label{eq:divg_f:20}
    [f^+]_p \leq 3^{d-p} [f]_p \quad \text{and} \quad
    \llbracket g^+ - g\rrbracket_{p-1} \lesssim [f]_p,
  \end{equation}
  where $\lesssim$ denotes comparison up to a multiplicative constant that may only depend on $p$ and $d$.
  Suppose that we have established \eqref{eq:divg_f:20}. We define
  \begin{displaymath}
    f^{(0)} := f, \quad g^{(0)} := 0, \quad \text{and} \quad
    (f^{(n+1)}, g^{(n+1)}) := \mathcal{C}(f^{(n)}, g^{(n)})
    \quad \text{for all $n\in\Z_+$}
    .
  \end{displaymath}
  Since $p > d$, we obtain that $[f^{(n)}]_p \to 0$. Moreover, since
  $\sum_{\ell\in\Z^d} f^{(n)}(\ell) = 0$ for all $n$ it follows that $\| f^{(n)}
  \|_{\ell^1} \to 0$. Further, \eqref{eq:divg_f:20} implies
  \begin{displaymath}
    \llbracket g^{(n+1)} - g^{(n)}\rrbracket_{p-1} \lesssim [f^{(n)}]_p \leq
    3^{n(d-p)} [f]_p,
  \end{displaymath}
  and hence the series $\sum_{n = 0}^\infty g^{(n+1)} - g^{(n)}$
  converges. Let
  $g(\ell) := \lim_{n\to\infty} g^{(n)}(\ell)$, then
  \eqref{eq:divg_f:15} implies that $g$ satisfies the identity in
  \eqref{eq:divg_f}, and the bound on $\llbracket g\rrbracket_{p-1}$ implies
  the inequality in \eqref{eq:divg_f}.
  It remains to note that if $f=f(\ell)=0$ outside the region $|\ell|_\infty \leq L$ for some $L$, then $f^{(n)}$, $g^{(n)}$, and hence $g$, are also zero outside this region.


To show the first inequality in \eqref{eq:divg_f:20}, we fix $\ell\ne 0$, express $f^+(\ell)$ through $f(\ell)$, and estimate
\begin{align*}
	|f^+(\ell)|
=~&
	\Bigg|\sum_{\substack{\lambda\in\Z^d \\ |\lambda-3\ell|_{\infty} \leq 1}} f(\lambda)\Bigg|
 \leq ~
	\sum_{\substack{\lambda\in\Z^d \\ |\lambda-3\ell|_{\infty} \leq 1}} \big(|\lambda|_{\infty}-\smfrac12\big)^{-p} [f]_p
\\ \leq ~&
	\sum_{\substack{\lambda\in\Z^d \\ |\lambda-3\ell|_{\infty} \leq 1}} \big(|3\ell|_{\infty}-1-\smfrac12\big)^{-p}[f]_p
 =
	3^d \, 3^{-p} \big(|\ell|_{\infty}-\smfrac12\big)^{-p} [f]_p
.
\end{align*}

The second inequality in \eqref{eq:divg_f:20} is based on the following two estimates:
  \begin{displaymath}
    |\tilde{f}(\ell)| \leq 3 (|\ell|_\infty-\smfrac12)^{-p} [f]_p \qquad \text{and} \qquad
    |\Delta\tilde{g}(\ell)| \lesssim (|\ell|_\infty+\smfrac12)^{-p+1} [f]_p,
  \end{displaymath}
where we denote again $(\tilde{f}, \tilde{g}) := \mathcal{C}_d(f, g)$ and $\Delta\tilde{g} := (\tilde{g} - g)\cdot e_d$.
The first estimate follows from arguments similar to the above.
The second estimate, for $\ell=(\bar{\ell}, \ell_d)$ with $\ell_d \leq 0$, is proved in the following calculation:
\begin{align*}
	|\Delta\tilde{g}(\ell)|
\leq~&
	\sum_{\lambda=3 \ell_d-1}^{\ell_d-1} |f(\bar{\ell}, \lambda)|
 \leq ~
	[f]_p \sum_{\lambda=3 \ell_d-1}^{\ell_d-1} (|(\bar{\ell}, \lambda)|_\infty-\smfrac12)^{-p}
\\ \leq ~&
	[f]_p \sum_{\lambda=3 \ell_d-1}^{\ell_d-1} (|(\bar{\ell}, \ell_d-1)|_\infty-\smfrac12)^{-p}
 \leq ~
	[f]_p |2\ell_d-1| \, (\smfrac13 (|(\bar{\ell}, \ell_d-1)|_\infty+\smfrac12))^{-p}
\\ \leq ~&
	[f]_p \, |2\ell_d+1| \, \smfrac1{3^p} (|\ell|_\infty+\smfrac12)^{-p}
 \leq ~
	[f]_p \, \smfrac2{3^p} (|\ell|_\infty+\smfrac12)^{-p+1},
\end{align*}
where we used that for $\ell_d\leq 0$, $|(\bar{\ell}, \ell_d-1)|_\infty \geq 1$ and the fact that $x-\smfrac12 \geq \smfrac13 (x+\smfrac12)$ for any $x\geq 1$.
For $\ell_d>0$ this estimate is obtained in a similar way.

The analogous estimates hold for applications of $\mathcal{C}_{d-1},
  \dots, \mathcal{C}_1$ and combining these yields the second inequality in
  \eqref{eq:divg_f:20}.
\end{proof}

\begin{corollary}\label{th:Dg_f} Let $p > d$ ($d\in\{2,3\}$), and
  $f : \mA\Z^d \to \R$ such that $|f(\ell)| \leq C_f |\ell|^{-p}$ for all
  $\ell \in \mA\Z^d$, and $\sum_{\ell \in \mA\Z^d} f(\ell) = 0$.  Then under the
  assumptions of \S~\ref{sec:pt:atm}, there exists $g : \mA\Z^d \to \R^{\Rg}$
  and a constant $C$ depending only on $p$ such that
  \begin{align*}
    \sum_{\ell\in\mA\Z^d} f(\ell) v(\ell)
    &=
      \sum_{\ell\in\mA\Z^d} \<g(\ell), Dv(\ell)\>
      \quad |g(\ell)| \leq CC_f |\ell|^{-p+1} \quad \text{for all
      } \ell \in \mA\Z^d.
  \end{align*}   
  In addition, if $d=2$, under the assumptions of \S~\ref{sec:dis:atm}, there
  exists $\tilde{g} : \mA\Z^2 \to \R^{\Rg}$ such that
  \begin{align*}
    \sum_{\ell\in\mA\Z^2} f(\ell) v(\ell)
    &=
    \sum_{\ell\in\mA\Z^2} \<\tilde{g}(\ell), \tilde{D}v(\ell)\>
    \quad |\tilde{g}(\ell)| \leq CC_f |\ell|^{-p+1} \quad \text{for all
    } \ell \in \mA\Z^2.
  \end{align*}
  If $f$ has compact support, then $g$ and $\tilde{g}$ can be chosen to have compact support as well.
\end{corollary}
\begin{proof}
One only needs to notice that the assumptions that the operators $D$ and $\tilde{D}$ contain nearest-neighbor finite differences (cf.\ \eqref{eq:pt:nn} and \eqref{eq:disl:nn}) allow to use Lemma \ref{th:divg_f} to construct the needed $g$ and $\tilde{g}$.
\end{proof}

\subsection{Proof of Lemma \ref{th:extension_lemma}}
\label{sec:prf_extension_pt}
The proof relies on two prerequisites.

\begin{lemma}
  Under the conditions of Lemma \ref{th:extension_lemma},
  \begin{displaymath}
    \Equad(u) := \sum_{\ell \in \L} \B( V_\ell(Du(\ell)) - \< \del
    V_\ell(\bfO), Du(\ell) \> \B)
  \end{displaymath}
  is well-defined for any $u \in \UsH$, and $\Equad \in C^k(\UsH)$.
\end{lemma}
\begin{proof}
  For a very similar argument that can be followed almost verbatim see
  \cite{OrtnerTheil2012}, hence we only give a brief idea of the proof.

  Since $|Du(\ell)| \in \ell^2(\L)$
  implies $|Du(\ell)| \in \ell^\infty$ and since $V_\ell \equiv V$ for $|\ell|
  \geq R_0$, we obtain that $\|\del^2 V_\ell(t Du(\ell))\| \leq C$, where $C$ is
  independent of $t \in [0, 1]$, and $\ell$. It follows that
  \begin{displaymath}
    \b| V_\ell(Du(\ell)) - \< \del
    V_\ell(\bfO), Du(\ell) \> \b| \leq C_u | Du(\ell) |^2,
  \end{displaymath}
  where $C_u$ depends only on $\|\,|Du|\, \|_{\ell^\infty}$. In
  particular, $\ell \mapsto V_\ell(Du(\ell)) - \< \del V_\ell(\bfO),
  Du(\ell) \> \in \ell^1(\L)$, and hence $\Equad(u)$ is well-defined.

  Using similar lines of argument one can prove that $\Equad \in
  C^k(\Adm)$. 
\end{proof}

\begin{lemma}
  Under the conditions of Lemma \ref{th:extension_lemma}, $\del \E(0)
  \in \UsHd$.
\end{lemma}
\begin{proof}
  Let $v \in \Usz$, then we can write the first variation in the form
  \begin{displaymath}
    \< \del \E(0), v \> = \sum_{\ell \in \L} \< \del V_\ell(\bfO),
    Dv(\ell) \> = \sum_{\ell \in \L} f(\ell) \cdot v(\ell).
  \end{displaymath}
  where $f(\ell)$ is given in terms of the $V_{\ell,\rho}$; the
  precise form is unimportant. Point symmetry of the lattice implies
  that $f(\ell) = 0$ for $|\ell| > \Rcore+\rcut$. Since $\E$ is
  translation invariant ($\E(u+c) = \E(u)$ for $c(\ell) = c \in \R$),
  it follows that $\sum_{\ell\in\L} f(\ell) = 0$. Therefore,
    \begin{displaymath}
      \b|\< f, u \>\b| = \b|\< f, u - u(0) \> \b| \leq 
      \|f \|_{\ell^2} \| u - u(0) \|_{\ell^2(\L \cap B_{\Rcore+\rcut})} \leq
      C \|f \|_{\ell^2} \|\nabla u \|_{L^2(B_\Rcore+\rcut)},
    \end{displaymath}
    where the inequality
    $\| u - u(0) \|_{\ell^2(\L \cap B_{\Rcore+\rcut})} \leq \| \nabla u
    \|_{L^2(B_\Rcore+\rcut)}$
    follows from the fact that only finite-dimensional subspaces are involved,
    and for these it is enough to see that for any $u$ such that the right-hand
    side vanishes, the left-hand side must vanish as well. But this is
    immediate. This completes the proof.
    %
    %
\end{proof}

For $u \in \Usz$,
\begin{displaymath}
  \E(u) = \Equad(u) + \< \del \E(0), u \>,
\end{displaymath}
which according to the two foregoing Lemmas is continuous with respect
to the $\UsH$-topology and thus has a unique extension to
$\UsH$. Since the first term is $C^k$ and the second is linear and
bounded, the result $\E \in C^k$ follows as well. This completes the
proof of Lemma \ref{th:extension_lemma}.

\subsection{Proof of Lemma \ref{th:disl:up_lemma} (properties of the
  dislocation predictor)}
\label{sec:prf_disl_yd_lemma}
Before we move on to prove the extension lemma in the dislocation case, Lemma
\ref{th:disl:extension_lemma}, we establish the facts about the dislocation
predictor displacement $\up$, summarized in Lemma \ref{th:disl:up_lemma}. We
begin by analyzing the auxiliary deformation map $\xi$ defined in
\eqref{eq:disl:defn_u0} in more detail. To simplify the notation let $\zeta :=
\xi^{-1}$ throughout this section.

\begin{lemma}
  \label{th:facts_xi}
  (a) If $\Rdisl$ is sufficiently large, then $\xi : \R^2 \setminus
  (\Gamma \cup B_{\Rdisl/4}) \to \R^2 \setminus \Gamma$ is injective.

  (b) The range of $\xi$ contains $\R^2 \setminus (\Gamma \cup
  B_{\Rdisl/4})$.

  (c) The map $\zeta^S(x) := ${\scriptsize
    $\cases{\zeta(x-\burg_{12}), & x_2 > \hat{x}_2, \\ \zeta(x), & x_2
      \leq \hat{x}_2}$} can be continuously extended to the half-space
  $\halfspace = \{ x_1 > \Rdisl+\burg_1 \}$, and after this extension
  we have $\zeta^S \in C^\infty(\halfspace)$.
\end{lemma}
\begin{proof}
  {\it (a) } Suppose that $x, x' \in \R^2 \setminus (\Gamma \cup
  B_{\Rdisl/4})$ and $\xi(x) = \xi(x')$, then $x_2 = x_2'$ and since
  $s \mapsto s + \smfrac{\burg_1}{2\pi} \arg( (s-\hat{x}_1, x_2-\hat{x}_2) )$ is clearly
  injective, it follows $x_1 = x_1'$ as well.

  {\it (b) } The map $\xi$ leaves the $x_2$ coordinate unchanged and
  only shifts the $x_1$ coordinate by a number between $0$ and
  $\burg_1$. Thus, for $\Rdisl/4 > |\burg_{1}|$, the statement clearly
  follows.

  {\it (c) } To compute the jump in $\zeta$ let $x \in \Gamma$, $x_1
  > \Rdisl+\burg_1$, then we see that $\xi(x+) = x$, $\xi(x-) = x -
  \burg_{12}$, and hence $\zeta(x+) = x$ and $\zeta(x-) =
  x+\burg_{12}$. Thus, we have
  \begin{displaymath}
    \zeta(x+) - \zeta\b( (x-\burg_{12})- \burg_{12}) = x - [x - \burg_{12} + \burg_{12}] = 0.
  \end{displaymath}
  Consequently, using also $\D \zeta(x) = \D \xi(\zeta(x))^{-1}$ and
  $\D\xi \in C^\infty(\R^2 \setminus \{0\})$, we obtain
  \begin{align*}
    \D\zeta(x+) - \D\zeta\b( (x-b_{12})- \b) &= \D\xi( \zeta(x+) )^{-1} -
    \D\xi\b(\zeta\b( (x-b_{12})-\b) \b)^{-1} \\
    &= \D\xi( \zeta(x+) )^{-1} - \D\xi(\zeta(x+))^{-1} = 0.
  \end{align*}
  The proof for higher derivatives is a straightforward induction
  argument.
\end{proof}

We now proceed with the proof of Lemma \ref{th:disl:up_lemma}.

{\it Proof of (i): $\up$ is well-defined. } The elasticities tensor
$\bbC$ is derived from the interaction potential and due to the
lattice stability assumption \eqref{eq:pt:stab_lattice} satisfies the
strong Legendre--Hadamard condition (see \S~\ref{sec:elast:lin} for
more detail). It is then shown in \cite[Sec. 13-3,
Eq. 13-78]{HirthLothe} that one can always find a solution to
\eqref{eq:disl:linel_pde} of the form
\begin{displaymath}
  u^\lin_i(\hat{x} + x) = {\rm Re} \Bg( \sum_{n = 1}^3 B_{i, n} \log\b( x_1 + p_n x_2 \b) \Bg),
\end{displaymath}
with parameters $B_{i,n}, p_n \in \C, i, n = 1, 2, 3$. (We use
$B_{k,n} \equiv -A_k(n)D(n) / (2\pi i)$ in the notation of Hirth and Lothe
\cite{HirthLothe}.)  The
logarithms are chosen with branch cut $\Gamma$.

Having seen that $\ulin$ is well-defined, Lemma \ref{th:facts_xi}
immediately implies that $\up$ is also well-defined. This completes
the proof of Lemma \ref{th:disl:up_lemma} (i).

Before we go on to prove statements (ii) and (iii) of Lemma
\ref{th:disl:up_lemma} we establish another auxiliary result.

\begin{lemma}
  Let $\partial_\alpha$, $\alpha \in \N^2$ be the usual multi-index
  notation for partial derivatives, then there exist maps
  $g_{\alpha,\beta} \in C^\infty(\R^2 \setminus \Gamma)$ satisfying
  $|\D^j g_{\alpha,\beta}| \lesssim |x|^{-1-j-|\alpha|_1+|\beta|_1}$
  such that
  \begin{equation}
    \label{eq:derivatives_u0}
    \partial_{\alpha} \up(x) = \b(\partial_{\alpha} \ulin\b)\b(\xi^{-1}(x)\b) +
    \sum_{j = 1}^{|\alpha|_1} \sum_{\substack{\beta \in \N^d \\ |\beta|_1 = j}}
    g_{\alpha,\beta}(x) \b( \partial_\beta \ulin\b)\b(\xi^{-1}(x)\b) \quad
    \text{for } \alpha \in \N^2.
  \end{equation}
  Moreover, for all $\alpha$ and $\beta$, $g_{\alpha,\beta} \circ S$ can be
  extended to a function in $C^\infty(\halfspace)$.
\end{lemma}
\begin{proof}
  We only need to consider $|x| > \Rdisl + |\burg|$.

  For $\alpha = 0$ the result is trivial (with $g_{0,0} = 0$). For the
  purpose of illustration, consider $\alpha = e_s$, $s \in \{1,2\}$,
  which we treat as the entire gradient:
  \begin{align*}
    \D \up &= \D\ulin(\xi^{-1}(x)) \D\xi^{-1}(x) \\
    &= \D\ulin(\xi^{-1}(x)) + \D\ulin(\xi^{-1}(x)) \b( \D \xi^{-1}(x)
    - \mI \b).
  \end{align*}
  Since $|\D \xi^{-1}(x) - \mI| \lesssim |x|^{-1}$, the result follows
  for this case.

  In general the proof proceeds by induction. Suppose the result is
  true for all $\alpha$ with $|\alpha|_1 \leq m$.

  We use induction over $|\alpha|_1$. For $|\alpha|_1 = 0$ the result
  is trivial with $g_{0, 0} = 0$. Let $|\bar{\alpha}|_1 = n-1 \geq 0$,
  $\alpha = \bar{\alpha} + e_s$ for some $s \in \{1,2\}$. Then,
  \begin{align*}
    \partial_{\alpha} \up &= \partial_{e_s}
    \bg[ \partial_{\bar\alpha} \ulin + \sum_{|\beta|_1 \leq
      |\bar\alpha|_1} g_{\bar\alpha,\beta} \partial_{\beta} \ulin \bg] \\
    &= \partial_{e_1 + \bar\alpha} \ulin \partial_{e_s} \zeta_1
    + \partial_{e_2 + \bar\alpha}\ulin \partial_{e_s} \zeta_2
    \\
    & \qquad
    + \sum_{|\beta|_1 \leq |\alpha|_1} \B[ \partial_{e_s}
    g_{\bar\alpha,\beta} \partial_{\beta} \ulin + g_{\bar\alpha,\beta}
    \B( \partial_{e_1+\beta} \ulin \partial_{e_s} \zeta_1
    + \partial_{e_2+\beta} \ulin \partial_{e_s} \zeta_2 \B) \B]\\
    &= \partial_{\alpha} \ulin + \partial_{e_1+\bar\alpha}\ulin
    \b( \partial_s \zeta_1 - \delta_{1s}\b)
    + \partial_{e_2+\bar\alpha} \ulin \b( \partial_s\zeta_2 -
    \delta_{2s}\b)  + \sum_{|\beta|_1 \leq |\alpha|_1 + 1} g_{\alpha,
      \beta}' \partial_{\beta} \ulin.
  \end{align*}
  for some $g_{\alpha,\beta}'$ that depend on $g_{\bar\alpha,\beta}$
  and its derivatives and have the same regularity and decay as stated
  for $g_{\alpha,\beta}$.

  Finally, the coefficient functions $( \partial_s\zeta_i -
  \delta_{is})$ are readily seen to also satisfy the same regularity
  and decay as stated for $g_{\alpha,\beta}$ with any $|\beta|_1 =
  |\alpha|_1$. This concludes the proof.
\end{proof}

{\it Proof of (ii)} Let $x \in \Gamma \cap \halfspace$, then
\begin{align*}
  S_0\up(x+) - S_0\up( x-) &= \up(x+) - \b[\up\b((x-\burg_{12})-\b)-\burg \b] \\
  &= \ulin(x+)  - \b[ \ulin\b( (x-\burg_{12} + \burg_{12})- \b) - \burg \b] \\
  &= \ulin(x+) - \ulin(x-) - \burg = \burg - \burg = 0.
\end{align*}

For derivatives of arbitrary order, the result is an immediate
consequence of \eqref{eq:derivatives_u0} and of Lemma
\ref{th:facts_xi}(c).  For illustration only, we show directly that
$\D \up$ is continuous across $\Gamma$: if $x \in \Gamma \cap \Omega$,
then, employing Lemma \ref{th:facts_xi} in the second identity,
\begin{align*}
  \D \up(x+) - \D\up( (x-b_{12})- )
  &= \D \ulin(\zeta(x+)) \D
  \zeta(x) - \D \ulin(\zeta((x-b_{12})-)) \D
  \zeta(x-b_{12}) \\
  &= \D \ulin(x) \D \xi(x)^{-1} -  \D\ulin(x)
  \D\xi(x)^{-1} = 0.
\end{align*}

{\it Proof of (iii): } This statement is an immediate consequence of
\eqref{eq:derivatives_u0}.

This completes the proof of Lemma \ref{th:disl:up_lemma}.

\subsection{Proof of Lemma \ref{th:disl:extension_lemma}}
\label{sec:prf:E_disl}
The main idea of the proof is the same as in the point defect case,
\S~\ref{sec:prf_extension_pt}. For $u \in \Usz$ we write
\begin{displaymath}
  \E(u) = \Equad(u) + \< \del \E(0), u \>,
\end{displaymath}
where now
\begin{align} \notag
  \Equad(u) &= \sum_{\ell \in \L} V_\ell(Du(\ell)) - \< \del
  V_\ell(\bfO), Du(\ell)\> \\ \notag
  &= \sum_{\ell \in \L} \B( V(D(\up+u)(\ell)) - V(D\up(\ell)) - \<
  \del V(D\up(\ell)), Du(\ell) \> \B), \qquad \text{and} \\
  \< \del \E(0), u \> &= \sum_{\ell \in \L} \< \del V(D\up(\ell)),
  Du(\ell) \>.
  \label{eq:prf:eqn_weak}
\end{align}
It is an analogous argument as in the point defect case to show that
$\Equad \in C^k(\Adm)$.

To prove that $\del\E(0)$ is a bounded linear functional, we first use
\eqref{eq:disl:slip_delV} to rewrite it in the form
\begin{displaymath}
  \<\del\E(0), u \> = \sum_{\ell \in \L} \b\< \del V(\bfO), \Del u(\ell) \b\>.
\end{displaymath}
Next, we convert it to a force-displacement formulation, by generalising
summation by parts to incompatible gradients $\Del$.

\begin{lemma}\label{th:disl:D_rho_conj}
  Let $v\in\UsH$ be such that $v(\ell)=0$ for all $\ell$ such that
  $|\ell|\leq 2|\rdisl|+|\burg_1|$.  Then $\Del_\rho^* v = \Del_{-\rho} v$ for all
  $\rho\in\Rg$. 
\end{lemma}
\begin{proof}
  We let $k\in\L$ and $u\in\UsH$, $u(\ell) := \delta_{k\ell}$.  Then we form the
  expression
  \[
  \sum_{\ell\in\L} \Del_\rho u(\ell) \cdot  v(\ell)
  -
  \sum_{\ell\in\L} u(\ell) \cdot \Del_{-\rho} v(\ell)
  \]
  and show that it vanishes. This result is geometrically evident, but could
    also be proved by a direct (yet tedious) calculation whose details we omit.
\end{proof}

We can now deduce that
\begin{equation}
  \label{eq:disl:delE0-f}
  \begin{split}
    \< \del \E(0), v \> &= \sum_{\ell \in \L} f(\ell) \cdot v(\ell),
    \qquad \text{where, } \\
    f(\ell) &= \sum_{\rho \in \Rg} \b[\Del_{-\rho} V_{,\rho}(e)\b](\ell),
    \qquad \text{for $|\ell|$ sufficiently large.}
  \end{split} 
\end{equation}

To prove that $\del \E(0)$ is bounded we must establish decay of
$f$. For future reference, we establish a more general result than
needed for this proof.

\begin{lemma}
  \label{th:disl:delE0}
  Let $f$ be given by \eqref{eq:disl:delE0-f}, and $0 \leq j \leq k-2$, then
  there exists $C$ such that
  \begin{equation}
    \label{eq:disl:decay_f}
    |\Del^j f(\ell)| \leq C |\ell|^{-3-j}.
  \end{equation}
\end{lemma}
\begin{proof}
  Throughout this proof we will implicitly assume that $|\ell|$ is sufficiently
  large so that the defect core $B_{\rdisl+|\burg|}(\hat{x})$ does not affect
  the computation. We first consider the case $j = 0$.

  {\it Case 1: left halfspace: } We first consider the simplified situation when
  $\ell_1 < \hat{x}_1$, that is we can simply replace $\Del \equiv D$ throughout. We
  will see below that a generalisation to $\ell_1 > \hat{x}_1$ is straightforward.

  We begin by expanding $V_{,\rho}$ to second order, 
  \begin{equation}
    \label{eq:appprf:disl_res_expVrho}
    V_{,\rho}(e) = V_{,\rho}(\bfO) + \< \del V_{,\rho}(\bfO), e \> + \int_0^1 (1-t)
    \< \ddel V_{,\rho}(te) e, e \> \dt.
  \end{equation}
  Point symmetry of $V$ implies that $\sum_\rho V_{,\rho}(\bfO) = 0$. Hence, we
  obtain
  \begin{align}
    \label{eq:appprf:disl:res1-def_f}
    f &= \sum_{\rho,\vsig \in \Rg} V_{,\rho\vsig}(\bfO) D_{-\rho}
        e_{\vsig} + \sum_{\rho, \in \Rg} \int_0^1 (1-t) D_{-\rho}
        \< \ddel V_{,\rho}(te) e, e \> \dt \\
    \notag
      &=: f^{(1)} + f^{(2)}.
  \end{align}
  Since $|D_\rho e(\ell)| \lesssim |\ell|^{-2}$, we easily obtain
  $|f^{(2)}(\ell)| \lesssim |\ell|^{-3}$.

  To estimate the first group we expand
  \begin{align*}
    \b|e_\rho(\ell) - \D_\rho \up(\ell) - \smfrac12 \D_\rho^2 \up(\ell)\b| &\lesssim
    \| \D^3 \up \|_{L^\infty(B_{\rcut}(\ell))} \lesssim |\ell|^{-3}, \quad
    \text{and hence} \\
    \b|D_{-\rho} e_\vsig(\ell) +  \D_\rho \D_\vsig \up(\ell)\b| &\lesssim |\ell|^{-3}.
  \end{align*} 
  Lemma \ref{th:disl:up_lemma}(iii) ($\D^2\up = \D^2\ulin + O(|x|^{-3})$, where
  $\bbC:\D^2 \ulin \equiv 0$) yields
  \begin{displaymath}
    f^{(1)} = - \sum_{\rho,\vsig \in \Rg} V_{,\rho\vsig}(\bfO)
    \D_\rho\D_\vsig \ulin(\ell) + O(|\ell|^{-3}) = O(|\ell|^{-3}).
  \end{displaymath}
  We have therefore shown \eqref{eq:disl:decay_f} for the case $j = 0$, when
  $\ell$ lies in the left half-space.
  
  {\it Case 2: right halfspace: } To treat the case $\ell_1 > \hat{x}_1, |\ell|$
  sufficiently large, we first rewrite
  \begin{displaymath}
    f = \Del_{-\rho} V_{,\rho}(\Del_0 \up)
    = [R D_{-\rho} S] V_{,\rho}\b([R D S_0]\up\b)
    = R D_{-\rho} V_{,\rho}(D S_0\up).
  \end{displaymath}
  Since $S_0 \up$ is smooth in a neighbourhood of $|\ell|$ (even if that
  neighbourhood crosses the branch-cut), we can now repeat the foregoing
  argument to deduce again that $|Sf(\ell)| \lesssim |\ell|^{-3}$ as well
  (cf.\ Remark \ref{rem:reflection}).  But since $S$ represents an $O(1)$
  shift, this immediately implies also that $|f(\ell)| \lesssim
  |\ell|^{-3}$. This completes the proof of \eqref{eq:disl:decay_f}.  
 
  {\it Proof for the case $j > 0$: } To prove higher-order decay,
    assume again at first that $\ell_1 < \hat{x}_1$ and consider
    $\bftau \in \Rg^j$, $j \geq 1$, then
    \begin{displaymath}
      D_\bftau f = \sum_{\rho,\vsig} V_{,\rho\vsig} D_{\bftau} D_{-\rho} e_\vsig
      + \sum_{\rho \in \Rg} \int_0^1 (1-t) D_\bftau D_{-\rho} \< \ddel V_{,\rho}(te) e, e \> \dt =: f^{(1)} + f^{(2)}.
    \end{displaymath}
    An analogous Taylor expansion as above yields
    \begin{displaymath}
      f^{(1)} = - \D_\bftau \sum_{\rho,\vsig \in \Rg} V_{,\rho\vsig}(\bfO)  \D_\rho \D_\vsig \ulin(\ell) + O(|\ell|^{-3-j}) =  O(|\ell|^{-3-j}), 
    \end{displaymath}
    applying again $\sum_{\rho,\vsig \in \Rg} V_{,\rho\vsig}(\bfO)
    \D_\rho \D_\vsig \ulin = 0$.

    The term $f^{(2)}$ is readily estimated by multiple applications
    of the discrete product rule, from which we obtain that
    $|f^{(2)}(\ell)| \lesssim |\ell|^{-j-3}$ again.

    The generalisation to the case $\ell_1 > \hat{x}_1$ is again
    analogous to above, due to the fact that
    \begin{displaymath}
      \Del_{\tau_1} \cdots \Del_{\tau_j} \Del_{-\rho} V_{,\rho} (e)
      = R D_{\tau_1} \cdots D_{\tau_j} D_{-\rho} V_{,\rho}(D S_0 \up).
    \end{displaymath}
    From this point, the argument continues verbatim to the case
    $\ell_1 < \hat{x}_1$.
\end{proof}

Applying Corollary \ref{th:Dg_f} to $f$ yields a map
$g : \L \to (\R^3)^\Rg$ such that
\begin{displaymath}
  \< \del\E(0), v \> = \< g, D v \>, \qquad \text{where} \qquad
  |g(\ell)| \lesssim |\ell|^{-2}.
\end{displaymath}
Thus,
$\< \del\E(0), v \> \leq \|g\|_{\ell^2}\|Dv\|_{\ell^2} \lesssim \|g\|_{\ell^2}
\|\D v\|_{L^2}$, and hence $\del\E(0) \in \UsHd$.

This completes the proof of Lemma \ref{th:disl:extension_lemma}.

\section{Proofs: Regularity}
\label{sec:reg}
In this section we prove the regularity results, Theorem
\ref{th:pt:regularity} and Theorem \ref{th:disl:regularity}.

\subsection{First-order residual for point defects}
\label{sec:reg:prelims}
\def\Rghom{\Rg}
Assume, first, that we are in the setting of the point defect case,
\S~\ref{sec:pt:atm}.  To motivate the subsequent analysis we first
convert the first-order criticality condition $\del\E(\ua) = 0$ for
\eqref{eq:pt:atm}.

Since $\D\ua \in L^2$, $D_\rho\ua(\ell) \to 0$ uniformly as $|\ell| \to \infty$,
for all $\rho \in \Rghom$. Consequently, for $|\ell|$ large, linearised lattice
elasticity provides a good approximation to $\del\E(\ua) = 0$. To exploit this
observation we first define the homogeneous lattice hessian operator
(cf.~\eqref{eq:pt:stab_lattice})
\begin{equation}
  \label{eq:reg:prelims:explicit_H}
  \< Hu, v \> = \sum_{\ell \in \mA\Z^d} \b\< \ddel V(\bfO) Du(\ell), Dv(\ell) \b\> 
  = \sum_{\ell \in \mA \Z^d} \sum_{\rho,\vsig \in \Rghom} D_\rho
  u(\ell)^T V_{\rho\vsig} (\bfO) D_\vsig v(\ell).
\end{equation}
We assume throughout that it is stable in the sense of
\eqref{eq:pt:stab_lattice}.

Finally, to state the first auxiliary result, we recall from \S~\ref{sec:pt:atm}
the definition of the interpolant $Iu$ for discrete displacements $u : \L \to
\R^d$, which provides point values $Iu(\ell)$ for all $\ell \in \mA\Z^d$.

\begin{lemma}[First-Order Residual for Point Defects]
  \label{th:reg:lin_eqn}
  Under the assumptions of Theorem \ref{th:pt:regularity}~there exists $g :
  \mA\Z^d \to (\R^m)^{\Rghom}$ and $R_1, C > 0$ such that
  \begin{align}
    \label{eq:reg:lin_eqn}
    \< H I\ua, v \> &= \< g, D v \>, \qquad \forall v \in
    \Usz(\mA\Z^d), \qquad \text{where } \\
   \label{eq:reg:bound_g_pt}
   \b|g(\ell)\b| &\leq C |D \ua(\ell)|^2 \qquad \forall \ell
   \in \mA\Z^d \setminus B_{R_1}.
  \end{align}
\end{lemma}
\begin{proof}
  Let $u \equiv I\ua$. We rewrite the residual $\< H u, v \>$ as
  \begin{align}
    \notag
    \< Hu, v \> = \sum_{\ell \in \mA\Z^d}  \,& \b\< \ddel V(\bfO)
    Du(\ell), Dv(\ell) \b\> \\
    \label{eq:reg:prelims:10}
    = \sum_{\ell \in \mA \Z^d} \,& \B( \b\< \del V(\bfO) + \ddel
    V(\bfO) Du(\ell)
    - \del V(Du(\ell)), Dv(\ell) \b\> \\[-3mm]
    \notag & \quad + \b\< \del V(Du(\ell)), Dv(\ell) \b\> -
    \b\<\del V(\bfO), Dv(\ell) \b\> \B).
  \end{align}
  The first group can be written as
  \begin{align*}
    & \b\< \del V(\bfO) + \ddel
    V(\bfO) Du(\ell)
    - \del V(Du(\ell)), Dv(\ell) \b\>  =: \< g_1(\ell),
    Dv(\ell) \>,
  \end{align*}
  and where we note that $g_1(\ell)$ is a linearisation remainder and hence
  $|g_1(\ell)| \lesssim |Du(\ell)|^2$ for $|\ell|$ sufficiently large.

  The second group is the residual of the exact solution after
  projection to the homogeneous lattice $\mA\Z^d$. Writing this group
  in ``force-displacement'' format,
  \begin{displaymath}
    \sum_{\ell \in \mA\Z^d} \< \del V(Du(\ell)), Dv(\ell) \> =
    \sum_{\ell \in \mA \Z^d} f(\ell) v(\ell),
  \end{displaymath}
  we observe that $f(\ell) = \sum_{\rho \in \Rg} D_{-\rho}
  V_{,\rho}(Du(\ell))$ has zero mean as well as compact support due to
  symmetry of the lattice. Because of the mean zero condition, we can
  write it in the form $\< f, v \> = \< g_2, D v \>$ where $g_2$ also
  has compact support (cf. Corollary \ref{th:Dg_f}).

  Finally, the third group vanishes identically, which can for example
  be seen by summation by parts. Setting $g = g_1 + g_2$ this
  completes the proof.
\end{proof}

\subsection{The Lattice Green's Function}
\label{sec:reg:lgf}
To obtain estimates on $\ua$ and its derivatives from
\eqref{eq:reg:lin_eqn} we now analyse the lattice Green's function
(inverse of $H$). The following results are widely expected but we
could not find rigorous statements in the literature in the generality
that we require here.

Using translation and inversion symmetry of the lattice, the
homogeneous finite difference operator $H$ defined in
\eqref{eq:reg:prelims:explicit_H} can be rewritten in the form
\begin{equation}
  \label{eq:reg:prelims:explicit_H_2}
  \< H u, u \> = \sum_{\ell \in \mA \Z^d} \sum_{\rho \in \Rghom'} D_\rho
  u(\ell)^T A_\rho D_\rho u(\ell)
\end{equation}
where $\Rghom' := \{ \rho + \vsig \sep \rho, \vsig \in \Rghom\}
\setminus \{0\}$ and $A_\rho \in \R^{d \times d}$. (Written in terms
of $V_{,\rho\vsig}$, $A_\rho = \sum_{\vsig, \tau \in \Rg, \vsig-\tau =
  \rho} V_{,\vsig\tau}$. Alternatively, one can define $A_\rho = -2
\frac{\partial^2 \< Hu, u \>}{\partial u(0) \partial u(\rho)}$ and
arrive at the same result; cf.~\cite[Lemma 3.4]{Hudson:stab}.)  Since
the Green's function estimates hold for general operators of the form
\eqref{eq:reg:prelims:explicit_H_2} we recall the associated stability
\begin{equation}
  \label{eq:reg:lgf:stab_H}
  \< H v, v \> \geq \gamma \| \D v \|_{L^2}^2 \qquad \forall v \in
  \Usz(\mA\Z^d),
\end{equation}
for some $\gamma > 0$.



Next, we recall the definitions of the semi-discrete Fourier transform
and its inverse, 
\begin{equation}
  \label{eq:sd_fourier_transform}
  \Ftd[u](k) := \sum_{\ell \in \mA\Z^d} e^{i k \cdot \ell} u(\ell),
  \qquad \text{and} \qquad
  \Ftd^{-1}[\hat{u}](\ell) = \int_{\Br} e^{-ik \cdot \ell} \hat{u}(k)\,dk,
\end{equation}
where $\Br \subset \R^d$ is the first Brillouin zone. As usual, the
above formulas are well-formed for $u \in \ell^1(\mA\Z^d; \R^m)$ and
$\hat{u} \in L^1(\Br; \R^m)$, and are otherwise extended by
continuity.

Transforming \eqref{eq:reg:prelims:explicit_H_2} to Fourier space, we
get
\begin{displaymath}
  \< H u, u \> = \int_{\Br} \hat{u}(k)^* \hat{H}(k) \hat{u}(k) \dk,
  \quad \text{where} \quad
  \hat{H}(k) = \sum_{\rho \in \Rg'} 4 \sin^2\b( \smfrac12 k \cdot
  \rho \b) A_\rho.
\end{displaymath}
Lattice stability \eqref{eq:reg:lgf:stab_H} can equivalently be
written as $\hat{H}(k) \geq \gamma' |k|^2 \mI$. Thus, if
\eqref{eq:reg:lgf:stab_H} holds, then the lattice Green's function can
be defined by
\begin{displaymath}
  \Gr(\ell) := \Ftd^{-1}[\hat{\Gr}](\ell), \quad \text{where} \quad
  \hat{\Gr}(k) := \hat{H}(k)^{-1}.
\end{displaymath}

We now state a sharp decay estimate for $\Gr$. 

\begin{lemma}
  \label{th:green_fcn}
  Let $H$ be a homogeneous finite difference operator of the form
  \eqref{eq:reg:prelims:explicit_H_2} satisfying the lattice stability
  condition \eqref{eq:reg:lgf:stab_H}, and let $\Gr$ be the associated
  lattice Green's function.
  
  Then, for any $\bfrho \in \Rghom^j, j > 0$, or $j = 0$ if $ d = 3$, there
  exists a constant $C$ such that
  \begin{equation}
    \label{eq:green_fcn_decay}
    \b| D_\bfrho \Gr(\ell) \b| \leq C (1+|\ell|)^{-d-j+2} \qquad \forall
    \ell \in \mA\Z^d.
  \end{equation}
\end{lemma}
\begin{proof}
  The strategy of the proof is to compare the lattice Green's function
  with a continuum Green's function.

  {\it Step 1: Modified Continuum Green's Function: } Let $\GrC$
  denote the Green's function of the associated linear elasticity
  operator $L = -\sum_{\rho \in \Rghom'} \D_\rho \cdot A_\rho
  \D_\rho$, and $\hat{\GrC}(k)$ its (whole-space) Fourier
  transform. Then, $\hat{\GrC}(k) = (\sum_{\rho \in \Rghom'}
  (\rho\cdot k)^2 A_\rho)^{-1}$, where we note that lattice stability
  assumption \eqref{eq:reg:lgf:stab_H} immediately implies that
  $\sum_{\rho \in \Rghom'} (\rho\cdot k)^2 A_\rho \geq \gamma' |k|^2
  \mI$, where $\gamma' > 0$. We shall exploit the well-known fact that
  \begin{equation}
    \label{eq:grfcn_prf:10}
    |\D^j \GrC(x)| \leq C |x|^{-d-j+2} \quad \text{for } |x|  \geq 1,
  \end{equation}
  where $C = C(j, \{A_\rho\})$; see \cite[Theorem 6.2.1]{Morrey}.
  
  Let $\hat{\eta}(k) \in C^\infty_{\rm c}(\Br)$, with $\hat{\eta}(k) = 1$ in a
  neighbourhood of the origin. Then, it is easy to see that its inverse
  (whole-space) Fourier transform
  $\eta := \Ft^{-1}[\hat{\eta}] \in C^\infty(\R^d)$ with superalgebraic decay. From this and
  (\ref{eq:grfcn_prf:10}) it is easy to deduce that
  \begin{equation}
    \label{eq:grfcn_prf:20}
    \b| D_\balpha (\eta \ast \GrC)(\ell) \b| \leq C |\ell|^{2-d-j}
    \quad \text{for } |\ell| \geq 1,
  \end{equation}
  where $C = C(j, H)$ and $\balpha \in \Rg^j$ is the multi-index defined in the
  statement of the theorem.

  {\it Step 2: Comparison of Green's Functions: } Our aim now is to
  prove that
  \begin{equation}
    \label{eq:grfcn_prf:30}
    \b| D_\balpha (\Gr - \eta \ast \GrC)(\ell) \b| \leq C |\ell|^{1-d-j},
  \end{equation}
  which implies the stated result. (In fact, it is a stronger
  statement.)

  We write
  \begin{displaymath}
    \Ftd[D_\balpha (\Gr - \eta \ast \GrC)] = (\hat{\Gr} - \hat{\eta}
    \hat{\GrC})  p_\balpha(k),
  \end{displaymath}
  where $p_\balpha(k) \in C^\infty_{\rm per}(\Br)$ with $|p_\balpha(k)|
  \lesssim |k|^j$. (To be precise, $p_\balpha(k) \sim (-i)^j \prod_{s =
    1}^j (\alpha_s \cdot k)$ as $k \to 0$.) Fix some $\epsilon > 0$
  such that $\hat{\eta} = 1$ in $B_\epsilon$. The explicit
  representations of $\hat{\Gr}$ and $\hat{\GrC}$ make it
  straightforward to show that (one employs the fact that
  $\hat{\Gr}^{-1} - \hat{\GrC}^{-1}$ has a power series starting with
  quartic terms)
  \begin{displaymath}
    \b| \Delta^n (\hat{\Gr} - \hat{\GrC}) p_\balpha(k) \b| \lesssim
    |k|^{-2n + j} 
  \end{displaymath}
  for $k \in B_\epsilon$, while $\Delta^n (\hat{\Gr} - \hat{\eta}\hat{\GrC})$ is
  bounded in $\Br \setminus B_\epsilon$. Thus, if $d-1+j$ is even and
  we choose $2n := d-1+j$, then we obtain that $\Delta^n (\hat{\Gr} -
  \hat{\GrC}) p_\balpha(k) \in L^1(\Br)$, which implies that
  \begin{align*}
    \b| D_\balpha (\Gr - \eta \ast \GrC)(\ell) \b| &= \b|\Ftd^{-1}[ \Delta^{-n}
    \Delta^n (\hat{\Gr} - \hat{\eta}
    \hat{\GrC})  p_\balpha(k)](\ell)\b| \\
    &\lesssim |\ell|^{-2n} = |\ell|^{1-d-j},
  \end{align*}
  which is the desired result (\ref{eq:grfcn_prf:30}).

  If $d-1+j$ is odd, then we can deduce (\ref{eq:grfcn_prf:30}) from
  the result for a larger multi-index $\balpha' = (\balpha, \rho')$ of
  length $j'$. Namely, fix $\ell \in \mA \Z^d$ and choose $\rho'$ a
  nearest-neighbour direction pointing away from the origin, then
  \begin{displaymath}
    D_\balpha \Gr(\ell) = \sum_{n = 0}^\infty D_{\balpha'}
    \Gr(\ell+n\rho')
  \end{displaymath}
  from which (\ref{eq:grfcn_prf:30}) easily follows.
\end{proof}


\subsection{Decay estimates for $Du$, point defect case}
\label{sec:reg:decay_Du}
At the end of this section we prove Theorem \ref{th:pt:regularity} for
the cases $j = 0, 1$. In preparation we first prove a more general
technical result.

\begin{lemma}
  \label{th:reg1}
  Let $H$ be a homogeneous finite difference operator of the form
  \eqref{eq:reg:prelims:explicit_H_2} satisfying the stability
  condition \eqref{eq:reg:lgf:stab_H}. Let $u \in \UsH(\mA\Z^d)$
  satisfy
  \begin{equation}
    \label{eq:reg:decay_Du:abstract_eqn}
    \< H u, v \> = \< g, Dv\>,  \qquad  \text{where} \quad
    \cases{ & \hspace{-4mm} g : \mA\Z^d \to (\R^m)^{\Rg}, \\[1mm]
      & \hspace{-4mm} |g(\ell)| \leq C 
      (1+|\ell|)^{-p} +C
    h(\ell) |Du(\ell)|, }
  \end{equation}
  $p \geq d$ and $h \in \ell^2(\mA\Z^d)$. Then, for any $\rho \in
  \Rghom$, there exists $C \geq 0$ such that, for $|\ell| \geq 2$,
  \begin{displaymath}
    |D_{\rho} u(\ell)| \leq \cases{ C|\ell|^{-d}, & \text{if } p > d, \\
      C|\ell|^{-d} \log |\ell|, & \text{if } p = d.}
  \end{displaymath}
\end{lemma}
\begin{proof}
  Recall the definition of the Green's function $\Gr$ from
  \S~\ref{sec:reg:lgf} and its decay estimates stated in Lemma
  \ref{th:green_fcn}.  Then, for all $\ell \in \mA\Z^d$, it holds that
  \begin{align*}
    u(\ell) &= - \sum_{k \in \mA\Z^d} \sum_{\rho \in \Rg} D_\rho
    \Gr(\ell -k) g_{\rho}(k), \qquad \text{and
      hence, for all $\sigma \in \Rg$, } \\
    D_\sigma u(\ell) &= - \sum_{k\in \mA\Z^d} \sum_{\rho \in \Rg}
    D_\sigma D_\rho \Gr(\ell -k) g_{\rho}(k) 
    = - \sum_{k\in \mA\Z^d} \sum_{\rho \in \Rg} D_\sigma D_\rho
    \Gr(k) g_{\rho}(\ell - k).
  \end{align*}
  Applying Lemma \ref{th:green_fcn} and the assumption
  \eqref{eq:reg:decay_Du:abstract_eqn}, we obtain
  \begin{equation}
    \label{eq:reg1:5}
    \b|D_\sigma u(\ell)\b| \leq C \sum_{k \in \mA \Z^d} (1+|k|)^{-d}
    \B((1+|\ell-k|)^{-p} + h(\ell-k) |D u(\ell-k)| \B).
  \end{equation}
  
  For $r > 0$, let us define $w(r):= \sup_{\ell \in \mA\Z^d, \; |\ell|
    \geq r} |D u(\ell)|$. Our goal is to prove that there exists a
  constant $C>0$ such that
  \begin{equation}
    \label{eq:decayw}
    w(r) \leq C z(r)(1 + r)^{-d} \qquad \text{for all $r > 0$,}
  \end{equation}
  where $z(r) = 1$ if $p > d$ and $z(r) = \log(2+r)$ if $p = d$. The
  proof of (\ref{eq:decayw}) is divided into two steps.

  {\it Step 1: } We shall prove that there exists a constant $C>0$ and
  $\eta: \R_+ \to \R_+$, $\eta(r) \longrightarrow 0$ as $r\to +
  \infty$, such that for all $r>0$ large enough,
  \begin{equation}
    \label{eq:ineqcentral}
    w(2r) \leq C z(r) (1+r)^{-d} + \eta(r)w(r).
  \end{equation}

  {\it Step 1a: } Let us first establish that, for all $|\ell| \geq 2
  r$, we have
  \begin{equation}
    \label{eq:fterm}
    \bg| \sum_{k\in \mA\Z^d} (1+|k|)^{-d} (1+|\ell-k|)^{-p} \bg| \leq C z(r) (1+r)^{-d}. 
  \end{equation}

  We split the summation into $|k| \leq r$ and $|k| > r$. We shall
  write $\sum_{|k| \leq r}$ instead of $\sum_{k \in \mA\Z^d, |k| \leq
    r}$, and so forth.

  For the first group, the summation of $|k| \leq r$, we estimate
  \begin{align}
    \sum_{|k|\leq r} (1+|k|)^{-d}  \,  (1+|\ell-k|)^{-p} 
    \notag
    & \leq (1 + r)^{-p} \sum_{|k|\leq r} (1+|k|)^{-d}  \\
    \label{eq:reg1:30}
    & \leq C (1 + r)^{-p} \log(2+r).
  \end{align}

  We now consider the sum over $|k| > r$. If $p > d$, then
  $(1+|\ell-k|)^{-p}$ is summable and we can simply estimate
  \begin{align}
    \notag
    \sum_{|k| > r} (1+|k|)^{-d} \b(1+|\ell-k|\b)^{-p} 
    & \leq  (1+r)^{-d} \sum_{|k| > r} (1+|\ell-k|)^{-p}  \\
    \label{eq:reg1:35a}
    & \leq   C (1+r)^{-d}, \qquad \text{ if $p > d$.}
  \end{align}
  If $p = d$, then we introduce an exponent $\delta > 0$, which we
  will specify momentarily, and estimate
  \begin{align*}
    \sum_{|k| > r} (1+|k|)^{-d} \b(1+|\ell-k|\b)^{-d}
    &  \leq  (1+r)^{-d + \delta} \sum_{ |k| > r} (1+|k|)^{-\delta}
    (1+|\ell-k|)^{-d} \\
    & \hspace{-2cm}\leq (1+r)^{-d + \delta} \bg( \sum_{|k|> r} \b(1+|k|\b)^{-(d +
      \delta)}\bg)^{\frac{\delta}{d + \delta}}\bg( \sum_{|k| > r} \b(1
    +|\ell-k|\b)^{-(d+\delta)}\bg)^{\frac{d}{d + \delta}} \\
    & \hspace{-2cm} \leq (1+r)^{-d+\delta} \, \sum_{k \in \mA\Z^d} \b(1+|k|\b)^{-(d+\delta)}.
  \end{align*}
  Applying the bound $\sum_{k \in \mA\Z^d} \b(1 + |k|\b)^{-(d+\delta)}
  \leq C \delta^{-1}$ we deduce that
  \begin{displaymath}
    \sum_{|k| > r} (1+|k|)^{-d} \b(1+|\ell-k|\b)^{-d} \leq  C
    (1+r)^{-d} \frac{(2+r)^{\delta}}{\delta}.
  \end{displaymath}
  Finally, we verify that, choosing $\delta = 1/\log(2+r)$ ensures
  $(2+r)^{\delta} \delta^{-1} = e \log(2+r)$, and hence we conclude
  that
  \begin{equation}
    \label{eq:reg1:35b}
    \sum_{|k| > r} (1+|k|)^{-d} \b(1+|\ell-k|\b)^{-d}  \leq C
    (1+r)^{-d} \log(2+r), \qquad \text{if $p = d$.}
  \end{equation}
  Combining \eqref{eq:reg1:30}, \eqref{eq:reg1:35a} and
  \eqref{eq:reg1:35b} yields \eqref{eq:fterm}.

  {\it Step 1b: } Let us now consider the remaining group in
  \eqref{eq:reg1:5}, 
  \begin{align*}
    \sum_{k \in \mA\Z^d} (1+|k|)^{-d} h(\ell-k) |D u(\ell-k)|,
  \end{align*}
  which we must again estimate for all $|\ell| \geq 2 r$.

  Recall that $h, |Du| \in \ell^2$. Defining $\tilde{h}(r) :=
  \sup_{|k| \geq r} h(k)$, we have $\tilde{h}(r) \to 0$ as $r \to
  +\infty$, and
  \begin{align*}
    \hspace{4mm} & \hspace{-7mm} \sum_{k \in \mA\Z^d} (1+|k|)^{-d} h(\ell-k)
    |Du(\ell-k)| \\
    &= \sum_{|k| \geq r} (1+|k|)^{-d} h(\ell - k) |Du(\ell -k)|
    + \sum_{|k| <  r} (1+|k|)^{-d}  h(\ell - k) |Du(\ell -k)| \\
    & \leq  C (1+r)^{-d} \sum_{|k|\geq r} |h(\ell - k)| |Du(\ell -k)|
    +  w(r) \sqrt{\tilde{h}(r)} 
    \sum_{|k| < r} (1+|k|)^{-d} |h(\ell - k)|^{1/2}\\
    & \leq  C (1+r)^{-d} \|h\|_{\ell^2}\|Du\|_{\ell^2} + w(r)
    \sqrt{\tilde{h}(r)}  \| (1+|k|)^{-d} \|_{\ell^{4/3}}\|h\|_{\ell^2}^{1/2}\\
    & \leq  C \bg( (1+r)^{-d} + w(r) \sqrt{\tilde{h}(r)} \bg).
  \end{align*}
  Combining this estimate with \eqref{eq:fterm} we have proved
  \eqref{eq:ineqcentral} with $\eta(r):= C \sqrt{\tilde{h}(r)}$.

  {\it Step 2: } Let us define $v(r):= \frac{r^d}{z(r)} w(r)$ for all
  $r>0$. We shall prove that $v$ is bounded on $\R_+$, which implies
  the desired result. Multiplying \eqref{eq:ineqcentral} with $2^d r^d
  / z(2r)$, we obtain
  \begin{displaymath}
    v(2r) \leq C \b( 1 + \eta(r) v(r)\b).
  \end{displaymath}
  There exists $r_0 > 0$ such that, for all $r > r_0$, $C \eta(r) \leq
  \frac{1}{2}$. This implies that, for all $r>r_0$,
  \begin{displaymath}
    v(2r) \leq C + \frac{1}{2} v(r). 
  \end{displaymath}
  Denoting $F:= \mathop{\sup}_{r\leq r_0} v(r)$ and reasoning by
  induction, we obtain that, for all $r>r_0$,
  \begin{displaymath}
    v(r) \leq  C + \frac{1}{2}\left( C + \frac{1}{2}\left( \ldots \left(C + \frac{1}{2}F\right)\ldots \right)\right)
    \leq C \sum_{k=1}^{N(r)} \frac{1}{2^k} + \frac{1}{2^{N(r)}}F,\\
  \end{displaymath}
  where $N(r) \leq C \log(2+r)$. Finally, the above inequality implies
  that $v(r) \leq C +F$ and thus $v$ is bounded on $\R_+$.

  This implies \eqref{eq:decayw} and thus completes the proof of the
  lemma.
\end{proof}

\begin{proof}[Proof of Theorem \ref{th:pt:regularity}, $j = 0, 1$]
  The case $j = 1$ is an immediate corollary of Lemma~\ref{th:reg1}
  and Lemma \ref{th:reg:lin_eqn}.

  To establish the case $j = 0$ we first note that, due to $|D_{\rho}
  \ua(\ell)| \leq C |\ell|^{-d}$ for all $\rho$ it can be easily shown
  that $\ua(\ell) \to c$ uniformly as $|\ell| \to \infty$. Thus,
  \begin{displaymath}
    \ua(\ell) - c = \sum_{i = 1}^\infty \B( \ua\b(\ell+i\rho\b) - \ua\b(\ell+(i-1)\rho\b) \B).
  \end{displaymath}
  Choosing $\rho$ such that $|\ell+i\rho| \geq c (|\ell|+i)$, we
  obtain the stated bounds.
\end{proof}

\subsection{Decay estimates for higher derivatives, point defect case}
\label{sec:reg:higher_pt}
From \S~\ref{sec:reg:decay_Du} we now know that $|D u(\ell)| \leq C
|\ell|^{-d}$ for $|\ell|$ sufficiently large, and more generally we
can hope to, inductively, obtain that $|D^i u(\ell)| \leq C
|\ell|^{1-d-i}$. Using this induction hypothesis we next establish
additional estimates on the right-hand side $g$ in
\eqref{eq:reg:lin_eqn}. 

Note that, if $|D^i u(\ell)| \lesssim |\ell|^{-p-i}$, then
\begin{equation}
  \label{eq:reg:higher_pt:observation}
  |D_\rho D^i u(\ell)| \leq |D^i u(\ell+\rho)|
  +|D^i u(\ell)| \lesssim |\ell|^{-p-i}
\end{equation}
as well, which gives a first crude estimate for the decay. Exploiting
this observation, the proofs of the higher-order decay estimates take
a somewhat simpler form, as they need to address the nonlinearity.

\begin{lemma}[Higher Order Residual Estimate, Point Defect Case]
  \label{th:reg:higher_res_pt}
  Suppose that the assumptions of Lemma \ref{th:reg:lin_eqn} are
  satisfied and that
  \begin{displaymath}
    \label{eq:reg:higher_res_pt-Diuasm}
    |D^i u(\ell)| \leq C |\ell|^{1-d-i} \qquad \text{for } i = 1,
    \dots, j, \quad |\ell| \geq R_1,
  \end{displaymath}
  then there exist $R_2, C$ such that
  \begin{displaymath}
    |D^j g(\ell)| \leq C |\ell|^{-1-d-j}
    \qquad \text{ for } |\ell| \geq R_2,
  \end{displaymath}
  where $g$ is defined in \eqref{eq:reg:lin_eqn}.
\end{lemma}
\begin{proof}
  The elementary but slightly tedious proof is a continued application
  of a discrete product rule, exploiting the observation
  \eqref{eq:reg:higher_pt:observation}. We begin by noting that
  $A_\rho f(\ell) := \smfrac12 (f(\ell+\rho) + f(\ell))$ yields the
  discrete product rule
  \begin{equation}
    \label{eq:disc_product_rule}
    D_\rho (f_1(\ell) f_2(\ell)) =
    D_\rho f_1(\ell) A_\rho f_2(\ell) + A_\rho f_1(\ell) D_\rho
    f_2(\ell), \qquad \rho \in \Rghom.
  \end{equation}

  Let $1\leq j \leq k-2$. Recall from the proof of Lemma
  \ref{th:reg:lin_eqn} that, for $|\ell| \geq R_1$, chosen
  sufficiently large,
  $ g(\ell) = \del V(\bfO) + \ddel V(\bfO) D u(\ell) - \del
  V(Du(\ell))$.
  Let $R_2 \geq R_1$ such that all the subsequent operations are
  meaningful.  We expand to order $j$ with explicit remainder of order
  $j+1$:
  \begin{align*}
    g_\rho(\ell) 
    &= 
      \frac12 \sum_{\vsig,\tau \in \Rg} \int_{s = 0}^1
      V_{,\rho\vsig\tau}(Du(\ell)) (1-s) \ds D_\vsig u(\ell) D_\tau
      u(\ell), \quad \text{ if } j = 1, \quad \text{and in general,} \\
    g_\rho(\ell) 
    &= 
      \frac12 \sum_{\bftau \in \Rg^2} \<V_{,\rho\bftau},
      D_\bftau^\otimes u(\ell) \> + \dots + \frac{1}{j !} \sum_{\bftau \in
      \Rg^j}
      \< V_{,\rho\bftau}, D_\bftau^\otimes u(\ell) \> \\
    & \qquad
      + \frac{1}{(j+1)!} \sum_{\bftau \in \Rghom^{j+1}} \int_0^1 \<
      V_{\rho, \bftau}(sDu(\ell)), D_\bftau^\otimes u(\ell) \> (1-s)^{j} \ds, 
  \end{align*}
  where $V_{,\rho\bftau} = V_{,\rho\bftau}(\bfO)$ and
  $D_\bftau^\otimes u(\ell) = \bigotimes_{k = 1}^i D_{\tau_k} u(\ell)$
  for $\bftau = (\tau_1, \dots, \tau_i)$.

  Let $\balpha = (\alpha_1, \dots, \alpha_{j}) \in \Rg^j$ be a
  multi-index. For any ``proper subset''
  $\balpha' = (\alpha_i)_{i \in I}, I \subsetneq \{1, \dots, j\}$, we
  have by the assumptions made in the statement of the lemma that
  \begin{displaymath}
    |D_{\balpha'} u(\ell)| \leq C |\ell|^{1-d - \# I} \qquad \text{for }
    |\ell| \geq R_1.
  \end{displaymath}
  Thus, applying the discrete product rule
  \eqref{eq:disc_product_rule}, we obtain, for $\bftau \in \Rg^{s}$,
  $s \geq 2$,
  \begin{equation}
    \label{eq:appprf:highdec_pt_100a}
    \b|D_{\alpha_1} \cdots D_{\alpha_j} \b(D_{\bftau}^\otimes u(\ell)
    \b) \b| \leq C |\ell|^{-ds-j} \leq C |\ell|^{-1-d-j}.
  \end{equation}
  Using, moreover, the estimates
  \begin{equation}
    \label{eq:appprf:highdec_pt_100b}
    \b|D_{\alpha_1} \cdots D_{\alpha_j}  V_{\rho,
      \bftau}(sDu(\ell)) \b| \leq C \quad
    \text{and} \quad \b| D_\bftau^\otimes u \b| \leq C |\ell|^{-d(j+1)}
    \leq C |\ell|^{-1-d-j},
  \end{equation}
  for $\bftau \in \Rg^{j+1}$, we can conclude that
  \begin{displaymath}
    \b| D_{\alpha_1} \cdots D_{\alpha_j} g_\rho(\ell) \b| \leq C
    |\ell|^{-1-d-j} + C |\ell|^{-d} |D^{j+1} u(\ell)|
    \quad \text{for } |\ell| \geq R_1.
  \end{displaymath}
  This, together with \eqref{eq:reg:higher_pt:observation}, completes
  the proof.
\end{proof}

To complete the proof of Theorem \ref{th:pt:regularity} we need a final
auxiliary lemma that estimates decay for a linear problem. 

\begin{lemma}
  \label{th:reg2}
  Let $H$ be a homogeneous finite difference operator of the form
  \eqref{eq:reg:prelims:explicit_H_2} satisfying the stability condition
  \eqref{eq:reg:lgf:stab_H}. Let $u \in \UsH(\mA\Z^d)$ satisfy
  \begin{displaymath}
    \< Hu, v \> = \< g, Dv \> \quad \text{where} \quad
    \cases{ & g : \mA \Z^d \to (\R^m)^\Rg, \\
      & |D^i g(\ell)| \leq C (1+|\ell|)^{-p-i}, \quad i = 0, \dots, j-1,
    }
  \end{displaymath}
  where $p > d$ and $j \geq 0$.  Then, for $i = 1, \dots, j$ and
  $\bfrho \in \Rghom^i$, there exists $C > 0$ such that
  \begin{displaymath}
    |D_{\bfrho} u(\ell)| \leq C (1+|\ell|)^{1-d-i}.
  \end{displaymath}
\end{lemma}
\begin{proof}
  The proof is a straightforward application of the decay estimates for the
  Green's function.  For the sake of brevity, we shall only carry out the
  details for the case $j = 2$. This will reveal immediately how to proceed for
  $j > 2$.
  
  For all $\ell \in \mA\Z^d$, $\vsig,\vsig' \in \Rg$, we have
  \begin{equation}
    \label{eq:reg2:10}
    D_\vsig D_{\vsig'} u(\ell) =  - \sum_{k\in \mA\Z^d} \sum_{\rho \in
      \Rg} D_{\vsig'}D_\vsig D_\rho \Gr(k)\, g_\rho(\ell-k).
  \end{equation}
  We again split the summation over $|k| \leq |\ell|/2 =: r$ and $|k| >
  r$. In the set $|k| > r$ the estimate is a simplified
  version (due to the absence of the nonlinearity) of {\it Step~1b} in
  the proof of Lemma \ref{th:reg1}, which yields
  \begin{displaymath}
    \bg|\sum_{|k| > r} \sum_{\rho \in
      \Rg} D_{\vsig'}D_\vsig D_\rho \Gr(k)\, g_\rho(\ell-k) \bg|
    \leq C r^{-1-d}.
  \end{displaymath}

  In the set $|k| < r$, we carry out a summation by parts,
  \begin{align}
    \notag
    \sum_{|k| \leq r} \sum_{\rho \in \Rg} D_{\vsig'} D_\vsig D_\rho
    \Gr(k)\, g_\rho(\ell-k) &= \sum_{|k| \leq r + |\vsig'|}
    \chi_{r,\vsig'}(k) D_\vsig D_\rho \Gr(k) D_{-\vsig'}
    g_\rho(\ell-k) \\ & \qquad +
    \label{eq:reg2:20}
    \sum_{r-|\vsig'| \leq |k| \leq r+|\vsig'|} \nu_{r,\vsig'}(k)
    D_{\vsig} D_\rho \Gr(k)\, g_\rho(\ell-k),
  \end{align}
  where $\chi_{r,\vsig'}(k), \nu_{r,\vsig'}(k) \in \{-1,0,
  1\}$. To see this, consider two discrete functions $a, b$ and
    the characteristic function $\chi(k) = 1$ if $|k| \leq r$ and
    $\chi(k) = 0$ otherwise. Then,
    \begin{align*}
      \sum_{|k| \leq r} \b(D_\tau a(k)\b) b(k) 
      &=
        \sum_{k  \in \L} \b(D_\tau a(k)\b) b(k) \chi(k)
        = 
        \sum_{k  \in \L} a(k) D_{-\tau}( b(k) \chi(k) ) \\
      &= 
        \sum_{k  \in \L} a(k) D_{-\tau} b(k) \chi(k+\tau) 
        + \sum_{k \in \L} a(k) b(k) D_{-\tau} \chi(k).
    \end{align*}
    This establishes the claim that the coefficients
    $\chi_{r,\vsig'}, \nu_{r,\vsig'}$ belong indeed to $\{-1,0,1\}$.

  The summation over $|k| \leq r+|\vsig'|$ can be bounded using a simplified
  variant of the estimates in {\it Step 1a} of the proof of Lemma \ref{th:reg1}
  and the decay assumption for $g$. This yields
  \begin{displaymath}
    \bg|\sum_{|k| \leq r + |\vsig'|}
    \chi_{r,\vsig'}(k) D_\vsig D_\rho \Gr(k) D_{-\vsig'}
    g_\rho(\ell-k)\bg| \leq C r^{-1-d}.
  \end{displaymath}
  
  The ``boundary terms'' in \eqref{eq:reg2:20} (second group on the right-hand
  side) are estimated by
  \begin{align*}
    \hspace{2cm} &\hspace{-2cm} \bg| \sum_{r-|\vsig'| \leq |k| \leq r+|\vsig'|} \nu_{r,\vsig'}(k)
    D_{\vsig} D_\rho \Gr(k)\, g_\rho(\ell-k) \bg| \\
    &\leq C
    \sum_{r-|\vsig'| \leq r \leq r+|\vsig'|} (1+|k|)^{-d}
    (1+|\ell-k|)^{-p} \\
    &\leq C r^{d-1} (1+r)^{-d-p} \leq C (1+r)^{-p-1} \leq C (1+r)^{-d-1}
  \end{align*}
  Thus, in summary, we can conclude that
  \begin{displaymath}
    \bg|\sum_{\substack{k \in \mA \Z^d \\ |k| \leq r}} \sum_{\rho \in \Rghom} D_{\vsig'} D_\vsig D_\rho
    \Gr(k)\, g_\rho(\ell-k)\bg| \leq C (1+r)^{-d-1}.
  \end{displaymath}
  
  The only modification for the case $j > 2$ is that $j-1$
    summation by part steps are required instead of a single one.
  This completes the proof of Lemma \ref{th:reg2}.
\end{proof}

\begin{proof}[Proof of Theorem \ref{th:pt:regularity}, Case $j \geq
  2$]
  The statement of Theorem \ref{th:pt:regularity}, Case $j \geq 2$, is
  an immediate corollary of Lemmas \ref{th:reg:higher_res_pt} and
  \ref{th:reg2}.
\end{proof}

\subsection{Proof of Theorem \ref{th:disl:regularity}, Case $j = 1$}
\label{sec:regdisl:lin+res}
We now adapt the arguments of the foregoing sections to the dislocation
case. Remembering that $D\up(\ell) \not\to 0$ as $|\ell| \to \infty$ we begin by
recalling the definitions of $e = \Del_0 \up$ and $\Del u$ from
\S~\ref{sec:prf:elastic_strain}, noting that $|e(\ell)|\lesssim |\ell|^{-1}$.

Let $u := \ua$, $v \in \Usz$ and $|\ell|$ sufficiently large, then
\eqref{eq:disl:slip_delV} yields
\begin{align*}
  \b\< \del V(D(\up+u)(\ell)), Dv(\ell) \b\> &= \b\< \del V(e+\Del u(\ell)), \Del 
  v(\ell)\b\> \\
  &= \b< \del V(e+\Del u) - \del V(e) - \ddel V(e) \Del  u, \Del  v \b\> \\
  & \qquad +
  \b\< (\ddel V(e) - \ddel V(\bfO) )\Del  u, \Del  v \b\> \\
  &\qquad + \b<\ddel V(\bfO) \Del  u, \Del  v \b\> + \< \del V(e),
  \Del  v \>.
\end{align*}
Upon defining the linear operator
\begin{equation}
  \label{eq:reg:disl:Htilde}
  \< \tilde{H} v, w \> := \sum_{\ell \in \L} \< \ddel V(\bfO) \Del  u, \Del  
  v \>, \qquad \text{for } v, w \in \UsH(\L),
\end{equation}
we obtain that 
\begin{equation}
  \label{eq:reg:linearisation_disl}
  \begin{split}
    \< \tilde{H} u, v \> 
    &= \sum_{\ell \in \L} \B(
    \b< \del V(e) + \ddel V(e) \Del  u - \del V(e+\Del u), \Del  v \b\> \\
    & \qquad \qquad + \b\< (\ddel V(\bfO) - \ddel V(e))\Del  u, \Del  v
    \b\>\B) - \< \del\E(0), v \>.
  \end{split}
\end{equation}

We can now generalise Lemma \ref{th:reg:lin_eqn} as follows.

\begin{lemma}[First-Order Residual Estimate, Dislocations]
  \label{th:regdisl:residuals}
  Under the conditions of Theorem~\ref{th:disl:regularity} there exists
  $g : \L \to (\R^d)^{\Rg}$ and constants $C_1, R_1$ such that
  \begin{align*}
    \< \tilde{H} \ua, v \> &= \< g, \Del v \> \qquad \forall v \in
    \Usz, \quad \text{where} \\
    |g(\ell)| &\leq C_1 \b( |\ell|^{-2} + |\Del  \ua(\ell)|^2 \b)
    \qquad \text{for } |\ell| \geq R_1.
  \end{align*}
\end{lemma}
\begin{proof}
  Setting again $u = \ua$, we can write
  \begin{align}
    \< \tilde{H} u, v \>
    \notag
    &= 
      \sum_{\ell \in \L} \B( \b\< (\ddel V(\bfO) - \ddel V(e))\Del u, \Del v
      \b\>
    \\[-3mm]
    \notag
    & 
      \qquad \qquad + \b< \del V(e) + \ddel V(e) \Del u - \del
      V(e+\Del u), \Del v \b\>  \B) - \< \del\E(0), v \> \\
    \label{eq:prf:disl:tilH-residual-eqn}
    &=: 
      \<g^{(1)} + g^{(2)}, \Del v \> - \< f, v \>,
  \end{align}
  where we employed Lemma \ref{th:disl:delE0} in the last step.
  
  {\it The $\<f, v\>$ group: } 
  The decay $|f(\ell)| \lesssim |\ell|^{-3}$ implies that also
  $|Sf(\ell)| \lesssim |\ell|^{-3}$, hence Corollary \ref{th:Dg_f} implies the
  existence of $g^{(3)}$, $|g^{(3)}(\ell)|\lesssim |\ell|^{-2}$ such that
  \begin{displaymath}
    \< \del\E(0), v \> = \< f, v \> = \< g^{(3)}, \Del v \>.
  \end{displaymath}

The first two groups are linearisation errors and it is easy to see
that, for $|\ell| \geq R_1$, with $R_1$ chosen sufficiently large, we
have
\begin{align*}
  \b|g^{(1)}(\ell)\b| \leq C |\ell|^{-1} |\Del u(\ell)| \quad
  \text{and} \quad \b| g^{(2)}(\ell) \b| \leq C |\Del u(\ell)|^2.
\end{align*}

Setting $g := g^{(1)} + g^{(2)} - g^{(3)}$ we obtain that stated result.
\end{proof}

An obstacle we encounter trying to extend the regularity proofs in the point
defect case (Lemma \ref{th:reg1} and Lemma \ref{th:reg2}) are the ``incompatible
finite difference stencils'' $\Del u(\ell)$, which occur in
\eqref{eq:reg:disl:Htilde}.  Interestingly, we can bypass this obstacle without
concerning ourselves too much with their structure, but instead using a
relatively simple boot-strapping argument starting from the following
sub-optimal estimate.

\begin{lemma}[Suboptimal estimate for $\Del u$]
  \label{th:regdisl:subopt}
  Under the conditions of Theorem \ref{th:disl:regularity}, there
  exists $R_1 > 0$ such that
  \begin{displaymath}
    |\Del  \ua(\ell)| \leq C |\ell|^{-1} \quad \text{ for all }
    |\ell| > R_1.
  \end{displaymath}
\end{lemma}
\begin{proof}
  In the following let $u := \ua$,
  $s_1 := \smfrac12 |\ell|-\rcut, s_2 := \smfrac12 |\ell|$ and assume
  that $|\ell|$ is always large enough so that
  $s_1 \geq \smfrac13|\ell| \geq \rdisl+|\burg_{12}|$.

  We first consider the case that $B_{\frac34 |\ell|}(\ell)$ does not
  intersect $\Gamma$. We will then extend the argument to the case
  when it does intersect.

  Let $\eta_1$ be a cut-off function with $\eta_1(x) = 1$ in
  $B_{s_1/2}(\ell)$, $\eta_1(x) = 0$ in $\R^2 \setminus B_{s_1}(\ell)$
  and $|\nabla \eta_1| \leq C |\ell|^{-1}$. Further, let $v(k) :=
  D_\tau \Gr(k - \ell)$, where $\Gr$ is the lattice Green's function
  associated with the homogeneous finite difference operator $H$
  defined in \eqref{eq:reg:prelims:explicit_H}. Then,
  \begin{equation}
    \label{eq:regdisl:prf:20}
    D_\tau u(\ell) = \< H u, v \> = \< Hu, [\eta_1 v]\> + \< H u,
    [(1-\eta_1)v] \>
  \end{equation}
  where $\eta_1 v, (1-\eta_1) v$ are understood as pointwise function
  multiplication.

  For the first group in \eqref{eq:regdisl:prf:20}, and assuming that
  $|\ell|$ is sufficiently large, $B_{3|\ell|/4}(\ell)$ does
  not intersect the branch-cut $\Gamma$, hence we have
  \begin{align*}
     \< Hu, [\eta_1 v]\> &= \< \tilde{H} u, [\eta_1 v] \> = \< g, D [\eta_1
     v] \>  \\
     &\lesssim \sum_{k \in B_{s_2}(\ell)} \b( |k|^{-2} +
     |D u(k)|^2 \b) \, \b| D[\eta_1 v](k) \b|.
  \end{align*}
  Using the decay estimates for $\Gr$ established in Lemma
  \ref{th:green_fcn} and the assumptions on $\eta_1$ it is
  straightforward to show that $| D[\eta_1 v](k) | \lesssim
  (1+|\ell-k|)^{-2}$, and hence we can continue to estimate
  \begin{align}
    \notag
    \b|\< Hu, [\eta_1 v]\>\b| &\lesssim \sum_{k \in
      B_{s_2}(\ell)} \b( |\ell|^{-2} + |D u(k)|^2 \b) \,
    (1+|\ell-k|)^{-2} \\
    \label{eq:regdisl:prf:25}
    &\lesssim |\ell|^{-2} \log |\ell| + \b\| \psi_\ell
    D u \b\|_{\ell^2(\L \cap B_{s_2}(\ell))}^2,
  \end{align}
  where $\psi_\ell(k) := (1+|\ell-k|)^{-1}$.
  
  To estimate the second group in \eqref{eq:regdisl:prf:20} we note
  that
  \begin{displaymath}
    D_\rho [(1-\eta_1) v](k) = -D_\rho \eta_1(k) A_\rho
    v(k) + A_\rho (1-\eta_1)(k) D_\rho v(k),
  \end{displaymath}
  where $A_\rho w(k) = \smfrac12 (w(k) + w(k+\rho))$. We first note
  that the first term on the right-hand side is only non-zero for $s_1
  \geq |\ell-k| \geq s_1/4$, while the second term on the right-hand side is
  only non-zero for $|\ell-k| \geq s_1 / 4$, both provided that $|\ell|$ is
  sufficiently large.  Applying the bounds for $\eta_1$ and $\Gr$
  again, we therefore obtain that
  \begin{align*}
    \b|D [(1-\eta_1) v](k)\b| \lesssim |\ell|^{-1} |\ell-k|^{-1}
    \chi_{[s_1/4, s_1]}(|\ell-k|) + |\ell-k|^{-2} \lesssim |\ell-k|^{-2}.
  \end{align*}
  Thus, we can estimate
  \begin{align*}
    \b| \b\< H u, [(1-\eta_1) v] \b\>\b| 
    &\lesssim \sum_{|k-\ell| > s_1/4} |D u(k)|\, |\ell-k|^{-2} \\
    &\lesssim \| D u \|_{\ell^2}  \bg(\sum_{|k-\ell| > s_1/4} |\ell-k|^{-4} \bg)^{1/2}
    \lesssim |\ell|^{-1}.
  \end{align*}

  To summarize the proof up to this point, we have shown that, if
  $|\ell|$ is sufficiently large and if $B_{3|\ell|/4}(\ell) \cap
  \Gamma = \emptyset$, then 
  \begin{equation}
    \label{eq:regdisl:prf:30}
    \b| D u(\ell) \b| \leq C \B( |\ell|^{-1} + \b\|
    \psi_\ell Du \b\|_{\ell^2(\L \cap B_{s_2}(\ell))}^2 \B).
  \end{equation}

  Next, we extend the argument to the case when
  $B_{3|\ell|/4}(\ell) \cap \Gamma \neq \emptyset$. We shall, in fact, present
  two different (but closely related) arguments in order to motivate the
  remaining proofs in this section.

  {\it (1) Algebraic Manipulations: } Consider now the case $\ell_1 > 0$ and
  recall that $\Del_{\tau} u(\ell) = R D_{\tau} Su(\ell)$. Let $v, \eta_1$ be
  defined as before, then we have again
  \begin{align*}
    S \Del_{\tau} u(\ell) = D_\tau Su(\ell) = \< HSu, v\>
    = \< HSu, [\eta_1 v] \> + \< HSu, [(1-\eta_1) v] \>.
  \end{align*}
  The estimate for the second group is identical as above; we obtain
  \begin{displaymath}
    \b|\< HSu, [(1-\eta_1) v] \>\b| \lesssim |\ell|^{-1}.
  \end{displaymath}
  Since, in the support of $\eta_1$, we have $\Del=RDS$, the first group now
  rewrites upon defining $B := \ddel V(\bf0)$ as
  \begin{align*}
    \< HSu, [\eta_1 v] \> 
    &=
      \sum \b\< B DSu, D[\eta_1 v] \b\>
      = 
      \sum \b\< B [RDS]u, [RDS]R[\eta_1v] \b\> \\
    &=
      \sum \b\< B \Del u, \Del R[\eta_1 v] \b\> 
      =
      \< \tilde{H} u, R[\eta_1v]\> \\
    &=
      \< g, \Del R[\eta_1v] \> = \< g, R D[\eta_1 v] \>.
  \end{align*}
  We can now argue analogously as in case $\ell_1 < 0$, to deduce that
  \begin{displaymath}
    \b| \< HSu, [\eta_1 v] \> \b| \lesssim |\ell|^{-2} \log|\ell| 
    + \|\psi_\ell \Del u \|_{\ell^2(\L \cap B_{s_2}(\ell))}^2.
  \end{displaymath}
  Thus, we have so far proven that
  \begin{equation}
    \label{eq:regdisl:prf:30_Duprime}
    \b| \Del  u(\ell) \b| \leq C \B( |\ell|^{-1} + \b\|
    \psi_\ell \Del u \b\|_{\ell^2(\L \cap B_{s_2}(\ell))}^2 \B)
    \qquad \forall \ell \in \L, \text{sufficiently large.}
  \end{equation}
  
  {\it (2) Reflection Argument: } An introspection of the previous paragraph
  indicates that, what we have in fact done is to derive an equation for $Su$
  which has identical structure to the equation satisfied by $u$, except that
  the branch-cut $\Gamma$ has been replaced with
  $\Gamma_S := \{ (x_1, \hat{x}_2) \sep x_1 \leq \hat{x}_1 \}$. We can therefore
  argue, much more briefly, as follows:
  
  According to Remark \ref{rem:reflection}, we have $\del\E_S(Su) = 0$ (recall that
  in the definition of $\E_S$ we have replaced $\up$ with $S_0 \up$). This new
  problem is structurally identical to $\del \E(u) = 0$, except that the
  branch-cut $\Gamma$ is now replaced with $\Gamma_S$. Therefore, it follows
  that \eqref{eq:regdisl:prf:30} holds, but $u$ replaced with $Su$ and for all
  $\ell_1 > \hat{x}_1$, $|\ell|$ sufficiently large. It is now immediate to see
  that we can replace $DSu$ with $RDSu = \Del u$ without changing the
  estimate. Thus we obtain again \eqref{eq:regdisl:prf:30_Duprime}.  

{\it Conclusion: } We now consider arbitrary $\ell$.  We rewrite \eqref{eq:regdisl:prf:30_Duprime}
in a way that allows us to apply the argument similar of Step 2 in the proof of
Lemma \ref{th:reg1}. We begin by noting that
  \begin{equation}
    \label{eq:regdisl:prf:40}
    \b\|
    \psi_\ell \Del  u \b\|_{\ell^2(\L \cap B_{s_2}(\ell))}^2
    \leq \| \psi_\ell \|_{\ell^4(B_{s_2}(\ell))}^2 \, \| \Del  u
    \|_{\ell^2(B_{s_2}(\ell))} \| \Del  u \|_{\ell^\infty(B_{s_2}(\ell))}.
  \end{equation}
  Fix $\epsilon > 0$, then there exists $r_0 > 0$ such that $\| \Del u
  \|_{\ell^2(B_{s_2}(\ell))} \leq \epsilon$, whenever $|\ell| \geq
  r_0$.

  Let $w(r) := \max_{|k| \geq r} |\Del  u(k)|$, then
  \eqref{eq:regdisl:prf:30_Duprime} and \eqref{eq:regdisl:prf:40}
  imply that
  \begin{displaymath}
    w(2r) \leq C\b( r^{-1} + \epsilon w(r) \b) \qquad \text{ for } r
    \geq r_0.
  \end{displaymath}
  We can now apply the argument of {\it Step 2} in the proof of Lemma
  \ref{th:reg1} to obtain that $w(r) \lesssim r^{-1}$ and hence
  $|\Del u(\ell)| \lesssim |\ell|^{-1}$.
\end{proof}

Having established a preliminary pointwise decay estimate on $\Del \ua$, we now
apply a boot-strapping technique to obtain an optimal bound.

\begin{proof}[Proof of Theorem \ref{th:disl:regularity}, Case $j =
  1$]
  In view of Remark \ref{rem:reflection} (cf. part (2) in the proof of Lemma
  \ref{th:regdisl:subopt}) we may assume, without loss of generality, that
  $\ell$ belongs to the left half-plane, i.e., $\ell_1 < \hat{x}_1$. We again
  define $v$ and $\eta_1$ as in the proof of Lemma \ref{th:regdisl:subopt}, and
  $B := \ddel V(\bf0)$, to write
  \begin{align*}
    D_\tau u(\ell) 
    &= 
      \< H u, v \> = \sum_{k \in \L} \< B Du(k), Dv(k) \> \\
    &= 
      \sum_{k \in \L} \< B \Del u(k), \Del v(k) \> 
      + \sum_{k \in \L} \B( \< B Du(k), Dv(k) \> 
      - \< B \Del u(k), \Del v(k) \>  \B) \\
    &=: {\rm T}_1 + {\rm T}_2.
  \end{align*}

  To estimate the first group we note that ${\rm T}_1 = \< g, \Del v\>$, hence
  we can employ the residual estimates from Lemma
  \ref{th:regdisl:residuals}. Combining Lemma \ref{th:regdisl:residuals} with
  Lemma \ref{th:regdisl:subopt} we have $|g(k)| \lesssim |k|^{-2}$, which
  readily yields
  \begin{align*}
    \b| {\rm T}_1 \b| 
    \leq
    \sum_{k \in \L} (1+|k|)^{-2} (1+|\ell-k|)^{-2}  
    \lesssim 
    |\ell|^{-2} \log |\ell|.
  \end{align*}
  Here, we used the observation that 
  \begin{displaymath}
    |\Del_\rho D_\tau G(k-\ell)| \lesssim (1+|k-\ell|)^{-2}
  \end{displaymath}
  due to the fact that $D_\rho S w(k) = D_{\rho'} w(k')$, where
  $|\rho-\rho'| + |k-k'| \lesssim 1$.
  
  To estimate ${\rm T}_2$, we observe that $\Del  w(k) = D w(k)$ for all
  $k \in \L \setminus U_\Gamma$, where we define $U_\Gamma$ to be a
  discrete strip surrounding $\Gamma$,
  $U_\Gamma := \L \cap (\Gamma + B_{\rcut})$. Thus, employing again
  Lemma \ref{th:regdisl:subopt}, 
  \begin{align*}
    \b|{\rm T}_2\b| \lesssim \sum_{k \in U_{\Gamma}} (1+|k|)^{-1}
    (1+|\ell-k|)^{-2} \lesssim |\ell|^{-2} \log |\ell|,
  \end{align*}
  where the final inequality crucially uses the fact that $\ell_1 < \hat{x}_1$,
  which implies that $|\ell-k| \gtrsim |\ell|+|k|$.
\end{proof}

\subsection{Proof of Theorem \ref{th:disl:regularity}, Case $j > 1$}
\label{sec:prf:disl:jgt1}
In view of case $j = 1$ and also of Lemma \ref{th:reg2}(b) it is natural to
conjecture that
\begin{displaymath}
  |\Del^i u(\ell)| \lesssim |\ell|^{-i-1} \log |\ell|.
\end{displaymath}
Suppose that we have proven this for $i = 1, \dots, j-1$. Then the triangle
inequality immediately yields
\begin{displaymath}
  |\Del^j u(\ell)| \lesssim |\ell|^{-j} \log |\ell|,  
\end{displaymath}
which is of course sub-optimal, but it allows us again to apply a bootstrapping
argument. In the dislocation case, this requires two steps, corresponding to
cases (a) and (b) of the following lemma.

\begin{lemma}[Residual Estimates]
  \label{th:prf:disl:higher-order-residual}
  Assume the conditions of Theorem \ref{th:disl:regularity} hold.

  (a) Suppose, further, that $2 \leq j \leq k-2$ and that there exist
  $C_1, R_1 > 0$ such that
  \begin{displaymath}
    |\Del^{i}  \ua(\ell)| \leq C_1 |\ell|^{-i-1} \log |\ell| \qquad \quad
    \text{for } 1 \leq i \leq j-1,  |\ell| \geq R_1,
  \end{displaymath}
  then there exists $g : \L \to (\R^3)^\Rg$ and $C_2, R_2$ such that
  \begin{align*}
    \< \tilde{H} \ua, v \> 
    &= 
      \< g, \Del v \>, \qquad \quad \text{where, for $|\ell| \geq R_2$, } \\
    |g(\ell)| &\leq C_2 |\ell|^{-2}, \\
    |\Del^{i} g(\ell)| 
    &\leq 
      C_2 |\ell|^{-2-i} \qquad \text{for } 
      i = 1, \dots, j-2, \quad \text{and} \\
    |\Del^{j-1} g(\ell)| 
    &\leq 
      C_2  |\ell|^{-1-j} \log |\ell|.
  \end{align*}

  (b) If, in addition, we also have that
  $|\Del^j \ua(\ell)| \leq C_1 |\ell|^{-j}$, then
  \begin{displaymath}
    |\Del^{j-1} g(\ell)| \leq C_2 |\ell|^{-1-j} \qquad \text{for }
    |\ell| \geq R_2.
  \end{displaymath}
\end{lemma}
\begin{proof}
  Many estimates in this proof are very similar to estimates that we have proven
  in previous results, hence we only give a brief outline. We begin by setting
  again $u \equiv \ua$ and recalling from \eqref{eq:prf:disl:tilH-residual-eqn}
  that
  \begin{align*}
    &\< \tilde{H}u, v \> = \< g^{(1)} + g^{(2)}, \Del v \> - \< f, v \>,
      \qquad \text{where} \\[2mm]
    & g^{(1)} = \b(\ddel V(\bfO) - \ddel V(e) \b) \Del u, \qquad
      g^{(2)} = \del V(e) - \ddel V(e) \Del u - \del V(e+\Del u),
  \end{align*}
  and $f$ is given by \eqref{eq:disl:delE0-f}. We now analyze the terms
  $g^{(j)}$ and $f$ in turn.
  
  {\it The term $g^{(1)}$: } Let $\ell_1 > \hat{x}_1$ (the case
  $\ell_1 \leq \hat{x}_1$ can be treated by a simplified argument).  Let
  $\alpha_1, \dots, \alpha_i \in \Rg$, $\bfrho \in \Rg^2$, then
  \begin{align*}
    \Del_{\alpha_1} \cdots \Del_{\alpha_i} V_{\bfrho}(e(\ell))
    &= R D_{\alpha_1} \cdots D_{\alpha_i} S V_{\bfrho}(RD S_0 \up(\ell)) \\
    &= R D_{\alpha_1} \cdots D_{\alpha_i} V_{\bfrho}(D S_0 \up(\ell)).
  \end{align*}
  Applying Lemma \ref{th:disl:up_lemma}(iii) it is easy to show that for
  $|\ell|$ sufficiently large,
  \begin{displaymath}
    \label{eq:eq:app_prf:g_dis:10}
    \b| \Del_{\alpha_1} \cdots \Del_{\alpha_i} V_{,\bfrho}(e(\ell)) \b| \leq C
    |\ell|^{-i-1} \quad \text{for } i \geq 1, \quad \alpha_i \in \Rg,
    \bfrho \in \Rg^2.
  \end{displaymath}
  Hence, and recalling the discrete product formula
  \eqref{eq:disc_product_rule}, we obtain in {\it case (a)}
  \begin{align}
    \notag
    \b| \Del_{\alpha_1} \cdots \Del_{\alpha_i} g^{(1)}(\ell) \b| 
    &\lesssim 
      |\ell|^{-i-3} \log |\ell| + |\ell|^{-1} |\Del^{i+1} u(\ell)| \\[1mm]
    \notag
    &\lesssim 
      \cases{ 
            |\ell|^{-i-2} + |\ell|^{-i-3} \log |\ell|, & i \leq j-2, \\
            |\ell|^{-i-2} + |\ell|^{-i-2} \log |\ell|, & i = j-1
      }
                                                         \\[1mm]
    \label{eq:app_prf:g_dis:modification_for_case_c}
    &\lesssim 
      \cases{
        |\ell|^{-i-2}, & i \leq j-2, \\
        |\ell|^{-i-2} \log |\ell|, & i = j-1.
      }
  \end{align}

  %
  In {\it case (b)} of the foregoing calculation, the log-factor in the $i=j-1$
  case is dropped, hence we then obtain the improved estimate
  $| \Del_{\alpha_1} \cdots \Del_{\alpha_j} g^{(1)}(\ell) | \lesssim
  |\ell|^{-1-j}$.

  {\it The term $g^{(2)}$: } The higher-order estimate for the term $g^{(2)}$
  can be performed very similarly as in the point defect case in
  \S~\ref{sec:reg:higher_pt}, but expanding about $e$ instead of
  $\bfO$. Applying $|\Del^i e(\ell)| \lesssim |\ell|^{-i-1}$, the hypothesis
  $|\Del^i u| \lesssim |\ell|^{-1-i}\log |\ell|$ and Lemma
  \ref{th:disl:up_lemma}(iii), and hence arguing analogously as in
  \S~\ref{sec:reg:higher_pt} we obtain
  \begin{displaymath}
    \b|\Del_{\alpha_1} \cdots \Del_{\alpha_i} g^{(2)}(\ell) \b| \lesssim
    |\ell|^{-i-4} \log^2 |\ell| + |\ell|^{-2} \log |\ell| |\Del^{i+1}
    u(\ell)| \b)  \lesssim |\ell|^{-2-i}.
  \end{displaymath}

  {\it The term $f$: } Recall from the proof of Lemma
    \ref{th:regdisl:residuals} that there exists $g^{(3)}$ such that
    $|g^{(3)}(\ell)| \lesssim |\ell|^{-2}$ and $\Del g^{(3)} = f$.  Setting
    $g = g^{(1)} + g^{(2)} - g^{(3)}$ this already completes the proof of the
    case $j = 2$. Applying Lemma \ref{th:disl:delE0}
    $|\Del^{i-1} g^{(3)}| \lesssim |\ell|^{-i-1}$.

  {\it Conclusion: } Summarising the estimates for difference operators applied
  to $g^{(1)}, g^{(2)}, \tilde{g}^{(3)}$ and choosing
  $\tilde{g} = g^{(1)} + g^{(2)} - g^{(3)}$ we obtain both of the decay
  estimates claimed in parts (a) and (b)
\end{proof}

\begin{proof}[Proof of Theorem \ref{th:disl:regularity}, Case $j > 1$]
  By induction, suppose that 
  \begin{equation}
    \label{eq:prf:disl:regfinal:10}
    |\Del^i \ua(\ell)| \lesssim
    |\ell|^{-i-1} \log |\ell| \text{ for } i = 1, \dots, j-1.     
  \end{equation}
  and consequently also 
  \begin{displaymath}
    |\Del^j \ua(\ell)| \lesssim |\ell|^{-j-2} \log^r |\ell|, 
  \end{displaymath}
  with $r = 1$. However, suppose more generally that $r \in \{0, 1\}$.
  
  In the following we assume again, without loss of generality, that
  $\ell_1 < \hat{x}_1$ (cf. Remark \ref{rem:reflection} and proof of Lemma
  \ref{th:regdisl:subopt}), and further that $|\ell|$ is sufficiently large.

  Let $u := \ua$, $\bfrho \in \Rg^j$ and let $v(k) := D_\bfrho \Gr(k - \ell)$,
  then
  \begin{align}
    \notag
    D_\bfrho u(\ell) 
    &= \< H u, v \> 
      = \< \tilde{H} u, v \> + \< (H-\tilde{H}) u, v \> \\
    \label{eq:regdisl:prf:70}
    &= \< g, \Del v \>  + \sum_{\ell \in U_\Gamma} \B(
      \< B Du, Dv \> - \< B \Del u, \Del v \> \B) \\
    \notag
    &=: {\rm T}_1 + {\rm T}_2.
  \end{align}

  {\it The term ${\rm T}_2$} can be estimated analogously as in the proof of the
  case $j = 1$ in \S~\ref{sec:regdisl:lin+res}, noting that by the same argument
  as used there,
  $|\Del D_{\bfrho} \mathcal{G}(k-\ell)| \lesssim |k-\ell|^{-j-1}$. Thus, one
  obtains
  \begin{displaymath}
    \b| {\rm T}_2 \b| \lesssim |\ell|^{-j-1} \log |\ell|.
  \end{displaymath}

  {\it The term ${\rm T}_1$: } First, we split
  \begin{displaymath}
    \< g, \Del v\> = \sum_{|k-\ell| \leq |\ell|/2} \< g(k), \Del v(k) \> 
    + \sum_{|k-\ell| > |\ell|/2} \< g(k), \Del v(k) \> =: {\rm S}_1 + {\rm S}_2.
  \end{displaymath}
  The second term is readily estimated, using $|\Del v(k)| \lesssim
  |\ell-k|^{-j-1}$, by
  \begin{displaymath}
    |{\rm S}_2| \lesssim \sum_{|k-\ell| > |\ell|/2} |k|^{-2} |\ell-k|^{-j-1}
    \lesssim |\ell|^{-j-1} \log |\ell|.
  \end{displaymath}

  To estimate ${\rm S}_1$ we first notice that, provided that $|\ell|$ is chosen
  sufficiently large, this sum only involves values of $g, v$ away from
  $\Gamma$, that is, $\Del \equiv D$ and we can write
  \begin{displaymath}
    {\rm S}_1 = \sum_{|k-\ell| \leq |\ell|/2} \< g(k), D v(k) \>.
  \end{displaymath}
  We are now in a position to mimic the argument of Lemma \ref{th:reg2} almost
  verbatim, only having to take care to take into account the slower decay of
  $g$. Namely, according to the hypothesis stated at the beginning of the
  present proof, and employing Lemma \ref{th:prf:disl:higher-order-residual} we
  have 
  $|D^{i} g(k)| \lesssim |k|^{-i-2} \log^r |k|$. This in turn yields an
  additional log-factor in the estimate
  \begin{displaymath}
    |{\rm S}_1| \lesssim |\ell|^{-j-1} \log^{r+1} |\ell|.
  \end{displaymath}
  In summary, we have $|{\rm T}_1| \lesssim |\ell|^{-j-1} \log^{r+1} |\ell|$.
  
  {\it Conclusion: } Arguing initially with $r = 1$, we obtain from
  the preceding arguments that
  $|D^j u(\ell)| \lesssim |\ell|^{-j-1} \log^2|\ell|$. This initial
  estimate implies that, at the beginning of the proof, we may in fact
  choose $r = 0$, and therefore, we even obtain the improved bound
  $|D^j u(\ell)| \lesssim |\ell|^{-j-1} \log |\ell|$. Recalling that
  we assumed (without loss of generality) $\ell_1 < \hat{x}_1$, so
  that in fact we have
  $|\Del^j u(\ell)| \lesssim |\ell|^{-j-1}\log|\ell|$, this completes
  the proof.
\end{proof}

\section{Proofs: Approximation Results}
\label{sec:prf_approx}

In this section we prove the approximation results formulated in \S\S
\ref{sec:pt:clamped}--\ref{sec:pt:ac} and
\ref{sec:dis:clamped}--\ref{sec:dis:ac}.

\subsection{Preliminaries}

We briefly establish two auxiliary results that will be needed for our
subsequent analysis. The first result is the discrete Poincar\'e inequality on
an annulus.

\begin{lemma}
  \label{th:poincare_annyulus}
  Let $0 < R_1 < R_2$, $\Sigma := \L \cap (B_{R_2} \setminus B_{R_1})$.  Then,
  there exist constants $c_{\rm P}$, $C_{\rm P}$, and $R_{\rm P}$ that depend only on the choice of $\T_\L$ such that, whenever $R_2-R_1 \geq c_{\rm P}$,
  \begin{displaymath}
    \| u - a \|_{\ell^2(\Sigma)} \leq R_2 C_{\rm P}
    \| D u \|_{\ell^2(\Sigma')} \qquad \forall u : \Sigma' \to \R^d,
  \end{displaymath}
  where $\Sigma' := \L \cap (B_{R_2+R_{\rm P}} \setminus B_{R_1-R_{\rm P}})$ and $a := \mint_{B_{R_2}\setminus B_{R_1}} Iu \dx$.
\end{lemma}
\begin{proof}
  Denote $S := B_{R_2} \setminus B_{R_1}$ and $S' := \bigcup_{T\in\T_\L , T\cap S \ne \emptyset} T$.
  We choose $c_{\rm P}$ so that for any $\ell \in \Sigma$ there exists $T\in\T_\L$ such that $\ell\in T$ and $T\subset S$.
  This immediately yields
%
  \begin{displaymath}
    \| u - a \|_{\ell^2(\Sigma)} \leq C \| I(u - a) \|_{L^2(S)}.
  \end{displaymath}
  Then by first using the continuous Poincar\'e inequality on $S$ we get
  \begin{displaymath}
    \| u - a \|_{\ell^2(\Sigma)}
    \leq R_2 C \| \D u \|_{L^2(S)}
    \leq R_2 C \| \D u \|_{L^2(S')}
    .
  \end{displaymath}
  It then remains to choose $R_{\rm P} := \sup_{T\in\T_\L} \diam(T)$ and notice that any $T\subset S'$ has its vertices in $\Sigma'$, hence
  $\|\D u \|_{L^2(S')} \lesssim \| Du \|_{\ell^2(\Sigma')}$.
\end{proof}


Next, we state a quantitative version of the inverse function theorem, which we
adapt from \cite[Lemma B.1]{LuOr:acta}.

\begin{lemma}
  \label{th:app_min_ift}
  Let $X$ be a Hilbert space, $w_0 \in X$, $R, M > 0$, and $E \in
  C^2(B^X_R(w_0))$ with Lipschitz continuous hessian, $\| \ddel E(x) -
  \ddel E(y) \|_{L(X, X^*)} \leq M \| x - y \|_X$ for $x, y \in
  B^X_R(w_0)$.  Suppose, moreover, that there exist constants $c, r >
  0$ such that
  \begin{displaymath}
    \< \ddel E(w_0) v, v \> \geq c \| v \|_X^2, \quad 
    \| \del E(w_0) \|_Y \leq r, \quad \text{and} \quad 2 M r c^{-2} < 1,
  \end{displaymath}
  then there exists a unique
  $\bar{w} \in B^X_{2 r c^{-1}}(w_0)$ with $\del E(\bar{w}) = 0$ and
  \begin{displaymath}
    \< \ddel E(\bar{w}) v, v \> \geq \b(1 - 2 M r c^{-2} \b) c \| v \|_X^2.
  \end{displaymath}
\end{lemma}

In the context of our analysis $E$ will be the energy to be minimised in the
approximate problem, $w_0$ a projection of the solution to the exact problem to
the approximation space $X$, and $B_R^X(w_0)$ is an $O(1)$ neighourhood within
which the approximate problem $\del E(\bar{w}) = 0$ has some regularity. The
stability constant $c$ and the consistency error $r$ determine in which
neighbourhood, namely $2 r c^{-1}$, an approximate solution $\bar{w}$ may be
found. The neighbourhoods that we employ here are exclusively
$\UsH$-neighbourhoods, that is, the solution $\bar{w}$ obtained via the IFT is
locally unique with locality measured in the energy-norm.

\subsection{Clamped boundary conditions}\label{sec:approx:dir}
The discrete Poincar\'e inequality readily yields the following approximation
estimate.

\begin{lemma}
	\label{th:atm:dir:basic_approx}
	Let $\eta \in C^1(\R^d)$ be a cut-off function satisfying $\eta(x) =
	1$ for $|x| \leq 4/6$ and $\eta(x) = 0$ for $|x| \geq 5/6$. For $R >
	0$ we define
	$T_R : (\R^d)^\L \to \Usz(\L)$ by
	\begin{equation}\label{eq:atm:TR}
          T_R u(\ell) := \eta\b( \ell / R \b) (u(\ell) - a_R) \qquad
          \text{where} \qquad a_R := \mint_{B_{5R/6}\setminus B_{4R/6}}
            Iu(x) \dx.
	\end{equation}
	If $R$ is sufficiently large, then $D T_R u(\ell) = Du(\ell)$ for all
        $\ell \in \L \cap B_{R/2}$,
	\begin{align}
		\label{eq:atm:dir:basic_approx}
		\b\| D T_R u - Du \b\|_{\ell^2} \leq~& C \| D u \|_{\ell^2(\L \setminus B_{R/2})},
		\qquad\text{and}
		\\
		\label{eq:atm:dir:basic_stab}
		\b\| D T_R u \b\|_{\ell^2}
		\leq
		~&
		C \| D u \|_{\ell^2(\L \cap B_R)}
		,
	\end{align}
	where $C$ is independent of $R$ and $u$.
\end{lemma}
\begin{proof}
	We start with expressing
	\begin{align*}
		D_\rho T_R u(\ell)
		=~&\hphantom{\displaystyle (1-\mathstrut)}
		\eta(\ell+\rho) D_\rho u(\ell)
		+
		D_\rho \eta(\ell) \, (u(\ell)-a_R)
		\qquad\text{and} \\
		D_\rho u(\ell) - D_\rho T_R u(\ell)
		=~&
		(1-\eta(\ell+\rho)) D_\rho u(\ell)
		-
		D_\rho \eta(\ell) \, (u(\ell)-a_R).
	\end{align*}
	It then remains to (i) take an $\ell^2$ norm, considering that
        $\eta(\ell+\rho)=0$ for $|\ell|>R$ and $1-\eta(\ell+\rho)=0$ for
        $|\ell|<B_{R/2}$ when $R \geq 6\rcut$, (ii) use that
        $|D_\rho \eta(\ell/R)| \leq C R^{-1}$, (iii) apply the discrete
        Poincar\'e inequality (Lemma \ref{th:poincare_annyulus}), and (iv)
        enforce $R$ large enough so that $\frac56 R + \rcut+ R_{\rm P} \leq R$
        and hence $\frac46 R - \rcut - R_{\rm P} \geq \frac12 R$.
\end{proof}

\begin{proof}[Proof of Theorem \ref{th:pt:clamped}]
	Let $w_R := T_R \ua$. Since $D w_R \to D\ua$ as $R\to\infty$ strongly in
	$\ell^2$ and $\E \in C^2$, we can conclude that $\ddel\E(w_R)
	\to \ddel\E(\ua)$ in the operator norm. Therefore, for $R$
	sufficiently large,
	\begin{displaymath}
		\b\< \ddel\E(w_R) v, v \b\> \geq
		\smfrac12 c_0 \| \D v \|_{L^2}^2 \qquad \forall v \in \Us_0(\Omega_R),
	\end{displaymath}
	where $c_0 > 0$ is the stability constant for $\ua$ from
        \eqref{eq:pt:strongstabeq}. Moreover, it is easy to deduce that
	\begin{align*}
		\b\< \del\E(w_R), v \b\> = \b\< \del\E(w_R) -
		\del\E(\ua), v \b\> 
		\leq~&
		C \| D w_R - D\ua \|_{\ell^2} \| D v \|_{\ell^2}
		\\\leq~&
		C \| D w_R - D\ua \|_{\ell^2} \| \D v \|_{L^2}
		\qquad \forall v \in \Us(\Omega_R).
	\end{align*}

	The inverse function theorem, Lemma \ref{th:app_min_ift}, implies that,
        for $R$ sufficiently large, there exists $\ua^0_R \in \Us(\Omega_R)$,
        which is a strongly stable solution to \eqref{eq:pt:clamped}, and
        satisfies
	\begin{displaymath}
		\| D w_R - D \ua^0_R \|_{\ell^2} \leq C \| D w_R - D\ua \|_{\ell^2}.
	\end{displaymath}
	Applying first Lemma~\ref{th:atm:dir:basic_approx} and then the
	regularity estimate, Theorem~\ref{th:pt:regularity}, yields the first bound in \eqref{eq:pt:clamped:errest}:
	\begin{align} \notag
		\| \D \ua^0_R - \D \ua \|_{L^2}^2
		\leq
                 C \| D \ua^0_R - D \ua \|_{\ell^2}^2
		&\leq
		C \| D w_R - D \ua \|_{\ell^2}^2
		\leq C \| D \ua \|_{\ell^2(\R^d \setminus B_{R/2})}^2
		\\ \label{eq:pt:clamped:10}
		&\leq C \int_{\R^d \setminus B_{R/2}} |x|^{-2d} \dx
		\leq C R^{-d}.
	\end{align}

 The second bound in \eqref{eq:pt:clamped:errest} is a standard corollary: For
 $R$ sufficiently large, $\E$ is twice differentiable along the segment $\{
 (1-s) \ua + s \ua^0_R \sep s \in [0, 1]\}$ and hence
	\begin{align*}
		\b| \E(\ua^0_R) - \E(\ua) \b| &=
		\bg|\! \int_0^1 \B\< \del\E\b( (1-s)
		\ua + s \ua^0_R\b), \ua^0_R - \ua \B\> \ds \bg| \\
		&\hspace{-2cm}= \bg|\! \int_0^1 \B\< \del\E\b( (1-s)
		\ua + s \ua^0_R\b) - \del\E(\ua), \ua^0_R - \ua \B\>  \ds \bg| 
		\leq C \| D\ua^0_R - D \ua \|_{\ell^2}^2.
		\qedhere
	\end{align*}
\end{proof}

\begin{proof}[Proof of Theorem \ref{th:disl:clamped}]
  The previous proof can be repeated almost verbatim, additionally considering
  that (i) restricting $u$ to an open set $\Adm$ does not affect applicability
  of the inverse function theorem, and (ii) using $d=2$ and the dislocation
  regularity estimate in \eqref{eq:pt:clamped:10} yields the
  $C R^{-2} (\log R)^2$ bound.
\end{proof}

\subsection{Periodic boundary conditions for point defects}\label{sec:approx:per}
\label{sec:per}
We start with with a norm equivalence result for $\Us^\per(\Omega_R)$.

\begin{lemma} \label{th:atm:per:norm_equiv} There exist $c, C>0$, independent of
  $R$, such that
\[
c \|\D v\|_{L^2(\omega_R)} \leq \|D v\|_{\ell^2(\Omega_R)} \leq C \|\D v\|_{L^2(\omega_R)} \qquad\text{for all }v\in \Us^\per(\Omega_R).
\]
\end{lemma}
\vspace{-1.5em}
\begin{proof}
  In addition to \eqref{eq:T_Rg_conform} which was needed for the norm
  equivalence in $\Us^{1,2}$, the assertion follows upon not
ing that
  $\|D v\|_{\ell^2(\Omega_R)}$ is supported on
  $\bigcup_{\rho\in\Rg} (\Omega_R + \rho)$ which is contained in a finite
  (independent of $R$) number of periodic images of $\omega_R$.
\end{proof}

The key technical ingredient in the proof for the clamped boundary conditions
was the estimate $\| D T_R \ua - D \ua \|_{\ell^2} \leq C R^{-d/2}$. To obtain a
similar truncation operator we define
$T_R^\per : \UsH(\L) \to \Us^\per(\Omega_R)$ via
\begin{displaymath}
  T_R^\per u(\ell) := T_R u(\ell), \quad \text{for } \ell \in \Omega_R,
\end{displaymath}
and extend it periodically on all of $\Omega_R^\per$.  We then immediately
obtain the same approximation error estimate, as an immediate corollary of
\eqref{eq:atm:dir:basic_approx}.

\begin{lemma}
  \label{th:atm:per:interp_err}
  Let $u \in \UsH(\L)$ and $B_{R+\rcut} \subset \Omega_R$, then
  \begin{equation}
    \label{eq:atm:per:interp_err}
    \b\| D T_R^\per u  - D u \b\|_{\ell^2(\Omega_R)} \leq C \| Du
    \|_{\ell^2(\L \setminus B_{R/2})},
  \end{equation}
  where $C$ is independent of $u$ and $\Omega_R$.
\end{lemma}

\medskip

Using this lemma, we can obtain a consistency estimate.

\begin{lemma}
  \label{th:atm:per:cons}
  Under the assumptions of Theorem \ref{th:pt:per} there exists
  a constant $C$ such that, for all sufficiently large $R$,
  \begin{displaymath}
    \b\< \del\E^\per_R(T_R^\per \ua), v \b\> \leq C R^{-d/2} \| \D v \|_{L^2(\omega_R)}
    \qquad \forall v \in \Us^\per(\Omega_R).
  \end{displaymath}
\end{lemma}
\begin{proof}
  Given a test function $v \in \Us^\per(\Omega_R)$, we construct a test function
  $w \in \Usz(\L)$ by letting $w := T_R Iv$, where we identify $Iv$ with a
  lattice function defined on $\L$.  Hence, by the assumption that
  $\Omega_R \supset B_R$, we have $Dw(\ell)=0$ for all $\ell \notin \Omega_R$.
  Thus,
  \begin{align}
    \notag
    \b\< \del\E^\per_R(T_R^\per \ua), v \b\> &= \b\<
    \del\E_R^\per(T_R^\per \ua), v \b\> - \b\< \del\E(\ua), w \b\>
    \\
    \notag
    &= \sum_{\ell \in \Omega_R} \Big(\b\< \del V(D T_R^\per \ua(\ell)), D v(\ell) \b\>
      - \b\< \del V(D \ua(\ell)), Dw(\ell) \b\>\Big)
    \\
    \label{eq:atm:per:cons:10}
    &=
    \sum_{\ell \in \Omega_R} \b\< \del V(D T_R^\per \ua(\ell)) -
    \del V(D \ua(\ell)), Dv(\ell) \b\>
    \\ \notag
    & \qquad +
    \sum_{\ell \in \Omega_R} \b\<\del V(D \ua(\ell)), Dw(\ell) - Dv(\ell) \b\>
  \end{align}

  The first group on the right-hand side of \eqref{eq:atm:per:cons:10}
  can be estimated, as in the proof of Theorem \ref{th:pt:clamped},
  by
  \begin{align*}
    \sum_{\ell \in \Omega_R}
    \b\< \del V(D T_R^\per \ua(\ell)) -
    \del V(D \ua(\ell)), Dv(\ell) \b\> &\leq C \| D T_R^\per \ua
    - D\ua \|_{\ell^2(\Omega_R)} \|Dv\|_{\ell^2(\Omega_R)} \\[-2mm]
    &\leq C R^{-d/2} \|Dv\|_{\ell^2(\Omega_R)}.
  \end{align*}

  To estimate the second group, we note that $D w=D v$ in $B_{R/2}$, hence
  \begin{align*}
    \sum_{\ell \in \Omega_R} \b\<\del V(D \ua(\ell)), Dw(\ell) - Dv(\ell) \b\>
    =~&
    \sum_{\ell \in \Omega_R\setminus B_{R/2}} \b\<\del V(D \ua(\ell)), Dw(\ell) - Dv(\ell) \b\>
    \\\leq~&
    C \|D \ua\|_{\ell^2(\Omega_R\setminus B_{R/2})} \|Dw-Dv\|_{\ell^2(\Omega_R\setminus B_{R/2})}
    \\\leq~&
    C R^{-d/2} \|Dw-Dv\|_{\ell^2(\Omega_R)}
    .
  \end{align*}

  It now remains to note that
  $\| Dw \|_{\ell^2(\Omega_R)} \leq C \| D v \|_{\ell^2(\L\cap B_R)}$ thanks
  to \eqref{eq:atm:dir:basic_stab}.  Using the norm equivalence, Lemma
  \ref{th:atm:per:norm_equiv}, concludes the proof.
\end{proof}

The second and main challenge for the proof of Theorem \ref{th:pt:per} is that,
since $\Us^\per(\Omega_R) \not\subset \UsH(\L)$, the positivity of
$\ddel\E^\per_R(T_R^\per \ua)$ is not an immediate consequence of positivity of
$\ddel\E(\ua)$ and continuity of $\ddel\E$. Establishing stability requires a
more involved argument, which we provide next.

\begin{theorem}[Stability of Periodic Boundary Conditions]
  \label{th:atm:per:stab}
  Let $\Omega_R$ be a family of periodic computational
  domains satisfying the assumptions of Theorem \ref{th:pt:per}.
  Let $u \in \Us^{1,2}$ and $u_R \in \Us^\per(\Omega_R)$ such that
  $\| Du_R - Du \|_{\ell^\infty(\Omega_R)} \to 0$ as $R \to
  \infty$. 

  For $R$ sufficiently large, the stability constants
  \begin{equation}
    \label{eq:atm:per:stab:1}
    \lambda := \inf_{\substack{v \in \Usz(\L) \\ \|\D v\|_{L^2} = 1}}
    \b\< \ddel\Ea(u) v, v \b\> \qquad \text{and} \qquad
    \lambda_R := \inf_{\substack{v \in \Us^\per(\Omega_R) \\ \| \D v
        \|_{L^2} = 1}} \b\< \ddel\E^\per_{\Omega_R}(u_R) v, v \b\>
  \end{equation}
  satisfy $\lambda_R \to \lambda$ as $R \to \infty$.
\end{theorem}

\medskip 
The proof  relies on two auxiliary lemmas.

\begin{lemma}
  \label{th:decomp_weak_conv}
  Let $w_j \in \UsH(\L)$ such that $Dw_j \rightharpoonup Dw$, weakly
  in $\ell^2$, for some $w \in \UsH$. Then there exist radii $R_j
  \uparrow \infty$ such that, for any sequence $R_j' \uparrow \infty,
  R_j' \leq R_j$,
  \begin{align} \label{eq:decomp_weak_conv_1}
    & D T_{R_j'} w_j \to D w \quad \text{strongly in } \ell^2
    \text{,} \quad  && D w_j - D T_{R_j'} w_j \rightharpoonup 0 \quad
    \text{weakly in } \ell^2,
    \\ \label{eq:decomp_weak_conv_2}
    & \D T_{R_j'} w_j \to \D w \quad \text{strongly in } L^2,
    \quad \text{ and} \quad  && \D w_j - \D T_{R_j'} w_j \rightharpoonup 0 \quad
    \text{weakly in } L^2
    .
  \end{align}
\end{lemma}
\begin{proof}
  We first prove \eqref{eq:decomp_weak_conv_1}.
  Since weak convergence implies strong convergence in finite
  dimensions, it follows that $D w_j(\ell) \to Dw(\ell)$ for all $\ell
  \in \L$. Therefore, $\| Dw_j - Dw \|_{\ell^2(\L \cap B_R)} \to 0$
  for any $R > 0$. Hence, there exists a sequence $R_j \uparrow
  \infty$, such that $\| Dw_j - Dw \|_{\ell^2(\L \cap B_{R_j})} \to
  0$. 

  Then for any $R_j' \leq R_j$
  \begin{align*}
    \| D T_{R_j'}^\per w_j - D w \|_{\ell^2} 
    &= \| D T^\per_{R_j'} w_j - D T^\per_{R_j'} w \|_{\ell^2(\L \cap B_{R_j'})} + \| D
    T^\per_{R_j'} w - D w \|_{\ell^2} \\
    &\leq C \| D w_j - D w \|_{\ell^2(\L \cap B_{R_j'})} + \| D
    T^\per_{R_j'} w - D w \|_{\ell^2} \\
    & \qquad \to 0 \qquad \text{as } j \to \infty,
  \end{align*}
  where, in the transition to the second line we used \eqref{eq:atm:dir:basic_stab}.

  The statements in \eqref{eq:decomp_weak_conv_2} follow directly from
  \eqref{eq:decomp_weak_conv_1} by applying Lemma \ref{th:atm:per:norm_equiv}.
\end{proof}

\begin{lemma}
  \label{th:strong_weak}
  Let $\varphi_N, \psi_N \in \ell^2(\L)$, such that $\varphi_N \to
  \varphi$ strongly in $\ell^2$ and $\psi_N \rightharpoonup 0$ weakly
  in $\ell^2$. Then, $\lim_{N \to \infty} \< \varphi_N, \psi_N \>_{\ell^2} =
  0.$
\end{lemma}
\begin{proof}
  We write
  \begin{displaymath}
    \< \varphi_N, \psi_N \>_{\ell^2} = \< \varphi_N - \varphi, \psi_N \>_{\ell^2} + \<
    \varphi, \psi_N \>_{\ell^2}.
  \end{displaymath}
  The first term on the right-hand side tends to zero due to strong
  convergence of $\varphi_N$, while the second term on the right-hand
  side tends to zero due to weak convergence of $\psi_N$.
\end{proof}

\begin{proof}[Proof of Theorem \ref{th:atm:per:stab}]

  Let $H := \ddel\Ea(u)$ and $H_R :=
  \ddel\E^\per_R(u_R)$. Throughout the proof suppose that $R$
  is sufficiently large so that all statements and operations are
  meaningful.
  
  {\it 1. Upper bound: } Let $v \in \Usz(\L)$, $\|\D v \|_{L^2} =
  1$, then for $R$ sufficiently large, $v$ can also be thought to
  belong to $\Us^\per(\Omega_R)$, then $\< H v, v \> = \< H_R v, v \>$ and
  hence
  \begin{displaymath}
    \overline{\lambda} := \limsup_{R \to \infty} \lambda_R \leq \< H v, v \>.
  \end{displaymath}
  Taking the infimum over all $v$ we obtain that $\overline{\lambda}
  \leq \lambda$.

  {\it 2. Decomposition: } Let $\ul{\lambda} :=
  \liminf_{R \to \infty} \lambda_R = \lim_{j \to \infty}
  \lambda_{R_j}$ for some subsequence $R_j \uparrow \infty$.
  For simplicity of notation, we denote $\Omega_j := \Omega_{R_j}$, $u_j := u_{R_j}$, and $H_j := H_{R_j}$.

  Then let $v_j \in \Us^\per(\Omega_j)$, $\|\D v_j \|_{L^2(\omega_j)} =
  1$, such that
  \begin{displaymath}
    \< H_j v_j, v_j \> \leq \ul{\lambda} + j^{-1}.
  \end{displaymath}

  As in the proof of Lemma \ref{th:atm:per:cons} let $w_j' := T^\per_{R_j} Iv_j$, then $\| \D w_j' \|_{L^2} \leq C \| \D v_j \|_{L^2(\omega_j)} \leq C$, where $C$ is independent of $j$.
  Upon extracting another subsequence (which we still label with $j$), we may assume, without loss of
  generality, that there exists $v \in \UsH(\L)$ such that
  \begin{align*}
    D w_j' \rightharpoonup D v \qquad &\text{weakly in } \ell^2
    \quad \text{ as } j \to \infty,
    \qquad\text{and}
    \\
    \D w_j' \rightharpoonup \D v \qquad &\text{weakly in } L^2
    \quad \text{ as } j \to \infty
    .
  \end{align*}

  According to Lemma \ref{th:decomp_weak_conv} there exists a sequence
  $r_j \uparrow \infty$ such that $r_j \leq
  R_j/2$, and
  \begin{displaymath}
    w_j := T^\per_{r_j} w_j' \qquad \text{satisfies} \qquad
    \D w_j \to \D v \quad \text{ strongly in $L^2$.}
  \end{displaymath}
  Note that thanks to the choice $r_j \leq R_j/2$ we have that $w_j = T^\per_{r_j} v_j$.

  Moreover, noting that $w_j \in \Usz(\L)$ as well as $w_j \in
  \Us^\per(\Omega_j)$, we can define $z_j := v_j - w_j$ and write
  \begin{align}
    \notag
    \< H_j v_j, v_j \> &= \b\< H_j z_j, z_j \b\> + 2 \b\<
    H_j w_j, z_j \b\> + \b\< H_j w_j, w_j \b\>
    \\
    \label{eq:atm:per:stab:decomp}
    &=: a_j + b_j + c_j.
  \end{align}
  
  {\it 3. Estimating $a_j$: } Our first step will be to observe that we have chosen $r_j$ such that, for all $\ell\in B_{r_j/2}$, $Dw_j(\ell) =
    Dv(\ell)$ and hence $Dz_j(\ell) = 0$. We will then exploit the fact that
  $Du(\ell) \to 0$ as $|\ell| \to \infty$. Let $H_j^0 :=
  \ddel\E_{R_j}^\per(0)$, then
  \begin{align*}
    a_j &= \< H_j z_j, z_j \> = \< H_j^0 z_j, z_j\> - \< (H_j-H_j^0)
    z_j, z_j \> \\
    &\geq \< H_j^0 z_j, z_j \> - C \| D u_j \|_{\ell^\infty(\supp(Dz_j)} \| \D z_j
    \|_{L^2}^2 \\
    &\geq \< H_j^0 z_j, z_j \> - C \| \D u_j \|_{\ell^\infty(\Omega_j \setminus B_{r_j/2})} \| \D z_j \|_{L^2}^2 \\
    &=
    \< H_j^0 z_j, z_j \> - o(1) \| \D z_j \|_{L^2}^2
    =
    \< H_j^0 z_j, z_j \> - o(1)
    ,
  \end{align*}
  where $o(1)$ denotes a quantity that converges to zero as $j \to \infty$, and where we used boundedness of $\| \D z_j \|_{L^2}^2$.

  Next, since $B_{r_j/2}\supset B_{\Rcore}$ for $j$ large enough, we have that $Dz_j(\ell) = 0$ for all $\ell \in \L \cap
  B_{\Rcore}$. Therefore, $\<H_j^0 z_j, z_j\>$ is independent of the
  defect core structure, which we can express as
  \begin{displaymath}
    \< H_j^0 z_j, z_j \> = \< H_j^\per z_j, z_j \>, 
  \end{displaymath}
  where $H_j^\per$ is the homogeneous and periodic finite difference
  operator
  \begin{displaymath}
    \< H_j^\per z, z \> = \sum_{\ell \in \omega_j \cap \mA \Z^d}
    \< \ddel V(\bfO) D z(\ell), D z(\ell) \>.
  \end{displaymath}
  
  Define also the lattice homogeneous lattice hessian (finite difference
  operator) $H^{\rm hom}$,
  \begin{displaymath}
    \< H^{\rm hom} z, z \> := \sum_{\ell \in \mA \Z^d}
    \< \ddel V(\bfO) D z(\ell), D z(\ell) \>,
  \end{displaymath}
  and let 
  \begin{displaymath}
    \lambda_j^\per := \inf_{\substack{z \in \Us^\per(\omega_j \cap
        \mA\Z^d) \\
        \|\D z \|_{L^2(\omega_j)} = 1}} \< H_j^\per z, z \> \quad \text{and}
    \quad
    \lambda^{\rm hom} := \inf_{\substack{z \in \UsH(\mA\Z^d) \\
        \|\D z \|_{L^2(\R^d)} = 1}} \< H^{\rm hom} z, z \>.
  \end{displaymath}
  Then it follows from \cite[Theorem 3.6]{Hudson:stab} that
  $\lambda_j^\per \to \lambda^{\rm hom}$ as $j \to \infty$.  Moreover, we have
  from \eqref{eq:pt:stab_lattice} that $\lambda^{\rm hom} \geq \lambda$ (see \S
  \ref{sec:prf_far_field_stab} for the proof).


  Combining the foregoing calculations we obtain that 
  \begin{equation}
    \label{eq:atm:per:stab:est_aj}
    a_j \geq \lambda \| \D z_j \|_{L^2}^2 - o(1).
  \end{equation}

  {\it 4. Estimating $c_j$: }
  Since $Dw_j$ vanish outside $\Omega_j$, we can estimate
  \[
    \|\<H_j w_j - H w_j, w_j\>\|
    =
    \<\ddel\Ea(u_j) w_j - \ddel\Ea(u) w_j, w_j\>
    \leq C \| Du_j - Du \|_{\ell^\infty(\Omega_j)} \|\D w_j\|_{L^2}^2
  \]
  and hence we have
  \begin{align} 
    \notag
    c_j = \< H_j w_j, w_j \>
    \geq~& \< H w_j, w_j \>
    - C \| Du_j - Du \|_{\ell^\infty(\Omega_j)}
    \| \D w_j \|_{L^2}^2
    \\ \label{eq:atm:per:stab:est_cj}
    \geq~& \lambda \| \D w_j \|_{L^2}^2
    - o(1)
    \| \D w_j \|_{L^2}^2
    = \lambda \| \D w_j \|_{L^2}^2
    - o(1)
    ,
  \end{align}

  {\it 5. Estimating $b_j$: } We let $z_j' := w_j' - w_j \in
  \UsH(\L)$ and using the fact $z_j'=z_j$ in $B_{R_j/2} \supset B_{r_j} \supset \supp(D w_j)$, we have, similarly to step 4,
  \begin{displaymath}
    b_j = 2 \< H_j w_j, z_j \> = 2 \< H_j w_j, z_j' \> = 2 \< H w_j, z_j' \>
    - o(1)
    , 
  \end{displaymath}
  According to Lemma
  \ref{th:decomp_weak_conv}, $D z_j' \rightharpoonup 0$ weakly in
  $\ell^2$ as $j \to \infty$. Since $Dw_j$ converges strongly in
  $\ell^2$, it follows that $g_j(\ell) := \ddel V_\ell(Du(\ell))
  Dw_j(\ell)$ also converges strongly in $\ell^2$ and hence Lemma
  \ref{th:strong_weak} implies that
  \begin{equation}
    \label{eq:atm:per:stab:est_bj}
    b_j = 2\< H w_j, z_j' \> - o(1) = 2\< g_j, D z_j' \> - o(1) \to 0 \quad \text{as } 
    j \to \infty.
  \end{equation}

  {\it 6. Completing the proof: } Combining \eqref{eq:atm:per:stab:decomp},
  \eqref{eq:atm:per:stab:est_aj}, \eqref{eq:atm:per:stab:est_cj},
  \eqref{eq:atm:per:stab:est_bj} we obtain
  \begin{align*}
    \< H_j v_j, v_j \>
    &\geq
    \lambda \b( \| \D w_j \|_{L^2}^2 + \| \D z_j \|_{L^2}^2 \b) - o(1)
    \\&=
    \lambda \b( \| \D w_j + \D z_j \|_{L^2}^2 - \<\D w_j, \D z_j\>_{L^2} \b) - o(1)
    =
    \lambda \| \D v_j \|_{L^2}^2 - o(1)
    ,
  \end{align*}
  where in the last line we used that $\<\D w_j, \D z_j\>_{L^2} = o(1)$ which
  follows on adapting Lemma~\ref{th:strong_weak} to the $L^2$ space.
\end{proof}

\begin{proof}[Proof of Theorem \ref{th:pt:per}]
  Repeating the proof of Theorem \ref{th:pt:clamped} almost
  verbatim, but using $T_R^\per$ instead of $T_R$ and employing
  the consistency estimate of Corollary \ref{th:atm:per:cons} and the
  stability result of Theorem \ref{th:atm:per:stab} we obtain, for
  sufficiently large $R$, the existence of a strongly stable solution
  $\ua_R^\per$ to \eqref{eq:pt:per} satisfying
  \begin{equation}
    \label{eq:atm:per:rate_step1}
    \| D T_R^\per \ua - D \ua^\per_R \|_{\ell^2(\Omega)} \leq C R^{-d/2}.
  \end{equation}
  The geometry error estimate (the first bound in \eqref{eq:pt:per:errest}) follows from
  \begin{align*}
    \| D \ua_R^\per - D \ua \|_{\ell^2(\Omega)}  &
    \leq \| D T_R^\per \ua - D \ua \|_{\ell^2(\Omega)} + CR^{-d/2} \\
  & \leq C \| D \ua \|_{\ell^2(\L
    \setminus B_{R/2})} + C R^{-d/2} \leq C R^{-d/2}.
  \end{align*}

  To estimate the energy error, arguing similarly as in the proof of
  Theorem \ref{th:pt:clamped}, and using the fact that
  $\E^\per_R(T^\per_R \ua) = \E(T_R \ua)$, we
  obtain
  \begin{align*}
    \b| \E^\per_R(\ua^\per_R) - \E(\ua) \b| 
    &\leq \b| \E^\per_R(\ua^\per_R) - \E^\per_R(T^\per_R \ua) \b|
    + \b| \E(T_R\ua) - \E(\ua) \b| \\
    &\leq C \B( \b\| D \ua^\per_R - D T^\per_R \ua \b\|_{\ell^2}^2
    + \b\| D T_R \ua - D \ua \b\|_{\ell^2}^2 \B).
  \end{align*}
  Applying the projection error estimate
  \eqref{eq:atm:dir:basic_approx}, the regularity estimate
  \eqref{eq:pt:regularity} with $j = 1$ and the error estimate
  \eqref{eq:atm:per:rate_step1}, we obtain the second bound in \eqref{eq:pt:per:errest}.
\end{proof}

\subsection{Boundary conditions from linear elasticity}\label{sec:approx:lin}

%
%
\begin{proof}[Proof of Theorem \ref{th:pt:lin}]
  {\it 1. Geometry error estimate: } We first use
  $|D\ua(\ell)| \leq C |\ell|^{-d}$ to estimate the consistency error
    \begin{align*}
      \< \del\E^\lin_R(\ua) - \del\E(\ua), v \> &= \sum_{\ell \in \L \setminus
        \Omega_R} \< \delta V_\ell(D\ua(\ell)) - \delta
      V^\lin(D\ua(\ell)), D v(\ell) \> \\
      &\leq C \sum_{\ell \in \L\setminus\Omega_R}
      \b|D\ua(\ell)\b|^2 \, |Dv(\ell)| \\
      &\leq C \b\| D\ua \b\|_{\ell^4(\L \setminus \Omega_R)}^2 \,
      \| D v \|_{\ell^2}
      \leq C R^{d/2-2d}
      \| \D v \|_{L^2}
      = C R^{-3d/2}
      \| \D v \|_{L^2}
      .
    \end{align*}
  
  Moreover, using an analogous linearisation argument it is
  straightforward to establish that
  \begin{displaymath}
    \b\|\ddel\E^\lin_R(\ua) - \ddel\E(\ua) \b\| \leq C \| D\ua
    \|_{\ell^\infty(\L \setminus \Omega_R)} \leq C R^{-d},
  \end{displaymath}
  where $\|\cdot\|$ denotes the $\UsH \to
  (\UsH)^*$ operator norm. This implies that
  \begin{displaymath}
    \< \ddel\E^\lin_R(\ua) v, v \> \geq (c_0 - C R^{-d}) \| \D v
    \|_{L^2}^2 \qquad \forall v \in \UsH.
  \end{displaymath}

  In particular, for $R$ sufficiently large, $\ddel\E^\lin_R$ is
  uniformly stable. The inverse function theorem, Lemma
  \ref{th:app_min_ift}, implies that, for $R$ sufficiently large, there
  exists a strongly stable solution $\ulin_R \in \UsH$ to
  \eqref{eq:pt:lin} satisfying the first bound in \eqref{eq:pt:lin:errest}.

  {\it 2. Energy error estimate: }
  Suppressing
  the argument $(\ell)$, we estimate
  \begin{displaymath}
    \hspace{-1cm} | V(D\ua) - V^\lin(D\ua)
    \b| \\
    \leq C |D\ua|^3
    \leq C|\ell|^{-3d}
  \end{displaymath}
  and therefore
  \begin{align*}
    \b| \E^\lin_R(\ulin_R) - \E(\ua) \b| &\leq \b| \E^\lin(\ulin_R) -
    \E^\lin_R(\ua) \b| + \b| \E^\lin_R(\ua) - \E(\ua) \b| \\
    &\leq C \| D \ulin_R - D \ua \|_{\ell^2}^2
    	+ \sum_{\ell \in \L \setminus \Omega_R} | V(D\ua) - V^\lin(D\ua)
    	    \b| \\ 
    &\leq C R^{-3d} + C \sum_{\ell \in \L \setminus \Omega_R} |\ell|^{-3d}
    ~~\leq~ C R^{-3d} + C R^{-2d}
    .
    \qedhere
  \end{align*}
\end{proof}

We follow the same programme for the proof for dislocations.

\begin{proof}[Proof of Theorem \ref{th:dis:lin}]
  {\it 1. Geometry error estimate: }
  We estimate the consistency error
  \begin{align*}
    \< \del\E^\lin_R(\ua) - \del\E(\ua), v \> &= \sum_{\ell \in \L \setminus
      \Omega_R} \< \delta V(e(\ell) + \Del\ua(\ell)) - \delta
    V^\lin(e(\ell) + \Del\ua(\ell)), D v(\ell) \> \\
    &\leq C \sum_{\ell \in \L\setminus\Omega_R}
    \b|e(\ell) + \Del\ua(\ell)\b|^2 \, |Dv(\ell)| \\
    &\leq C \b\| e(\ell) + \Del\ua(\ell) \b\|_{\ell^4(\L \setminus \Omega_R)}^2 \,
    \| D v \|_{\ell^2}
    \leq C R^{-1} \| \D v \|_{L^2}
    ,
  \end{align*}
  where we used Lemma \ref{th:disl:up_lemma} to estimate $|e(\ell)| \lesssim
  |\ell|^{-1}$ and Theorem \ref{th:disl:regularity} to estimate $|\Del
  \ua(\ell)| \lesssim |\ell|^{-2} \log |\ell|$.

  An analogous linearisation argument yields
  \begin{displaymath}
    \b\|\ddel\E^\lin_R(\ua) - \ddel\E(\ua) \b\| \leq C \| e(\ell) + \Del\ua(\ell)
    \|_{\ell^\infty(\L \setminus \Omega_R)} \leq C R^{-1}.
  \end{displaymath}
  This implies that
  \begin{displaymath}
    \< \ddel\E^\lin_R(\ua) v, v \> \geq (c_0 - C R^{-1}) \| D v
    \|_{\ell^2}^2 \qquad \forall v \in \UsH,
  \end{displaymath}
  and hence Lemma \ref{th:app_min_ift} yields all the statements except for the
  second bound in \eqref{eq:dis:lin:errest}.

  {\it 2. Energy error estimate: } Denoting $g := e(\ell) + \Del\ua$ and again
  suppressing the argument $(\ell)$, we estimate
  \begin{align*}
    & \hspace{-1cm} \b| V(g) - V^\lin(g) -
    V(e) + V^\lin(e)
    \b| \\
    &\leq \B|\smfrac16 \b\< \delta^3 V(\bfO) g,
    g, g \b\> - \smfrac16 \b\< \delta^3 V(\bfO) e, e,
    e \b\> \B| + C
    \b(|g|^4 + |e|^4\b) \\
    &\leq C \B( |g - e| |g|^2 + |g - e|^2 |g| + |g - e|^3
    + |g|^4 + |e|^4 \B) \\
    & = C \B( \b|\Del\ua\b| |g|^2 + \b|\Del\ua\b|^2 |g| + \b|\Del\ua\b|^3
    + |g|^4 + |e|^4 \B) \\
    &\leq C |\ell|^{-4} \log |\ell|
  \end{align*}
  
  Therefore,
  \begin{align*}
    \b| \E^\lin_R(\ulin_R) - \E(\ua) \b| &\leq \b| \E^\lin_R(\ulin_R) -
    \E^\lin_R(\ua) \b| + \b| \E^\lin_R(\ua) - \E(\ua) \b| \\
    &\leq C \| D \ulin_R - D \ua \|_{\ell^2}^2 
    	+ \sum_{\ell \in \L \setminus \Omega_R} \b| V(g) - V^\lin(g) -
        V(e) + V^\lin(e)
        \b| \\ 
    &\leq C R^{-2}
    	+ \sum_{\ell \in \L \setminus \Omega_R} |\ell|^{-4} \log|\ell|
    \leq C R^{-2} + C R^{-2} \log R
    .
    \qedhere
  \end{align*}
\end{proof}

\subsection{Boundary conditions from nonlinear elasticity}\label{sec:approx:ac}

%

\begin{proof}[Proof of Proposition \ref{prop:ACresult}] 
The right-hand side of \eqref{eq:ac:pt:est} can be easily estimated using the assumptions \eqref{eq:ac:restrictions} and the regularity estimate \eqref{eq:pt:regularity}.
Indeed, denote the set $A := B_{c_0 c_3 R^{1+2/d}}\setminus B_R$ and estimate
\[ 
\| h D^2 \ua \|_{\ell^2(\L\cap (\omega_R\setminus B_R))}^2
\leq
\| h D^2 \ua \|_{\ell^2(\L\cap A)}^2
\leq C
\int_{A} \B(\smfrac{|x|}{R}\B)^{2\beta} |x|^{-2d-2} \dx
= C R^{-d-2},
\] 
where the assumption $\beta < \smfrac{d+2}{2}$ was used in the last step.
The second term is bounded as in the earlier sections,
\[
\| D \ua \|_{\ell^2(\L\setminus B_{R_\c/2})}^2
\leq
\| D \ua \|_{\ell^2{\textstyle(}\L\setminus B_{c_2 R^{1+2/d}}{\textstyle)}}^2
\leq C \b(R^{1+2/d}\b)^{-d} = C R^{-d-2}.
\]

It remains to note that there exists $R_0$ such that $C R^{-d-2} \leq \eta$ for
$R\geq R_0$, and hence $\uac_R$ exists and \eqref{eq:pt:rate-ac} follows from
\eqref{eq:ac:pt:est}.
\end{proof}

\begin{proof}[Proof of Proposition \ref{th:dis:ac}]
  We first note that $\Del^2 \up(\ell) = O(|\ell|^{-2})$ as $|\ell|\to\infty$,
  which is an immediate consequence of Lemma \ref{th:disl:up_lemma}.
  Hence in the right-hand side of \eqref{eq:ac:pt:est}, $\up$ dominates $\ua$
  and one can easily bound, similarly to the point defect case,
\begin{align*}
&\b\| h \Del^2 (\up+\ua) \b\|_{\ell^2(\L\cap (\omega_R\setminus B_R)}^2
\leq C
\int_{B_{c_0 c_3 R^{p}}\setminus B_R} \big(\smfrac{|x|}{R}\big)^2 |x|^{-4} \dx
= C R^{-2},
\\
&\b\| \Del \ua \b\|_{\ell^2(\L\setminus B_{R_\c/2})}^2
\leq
\b\| \Del \ua \b\|_{\ell^2{\textstyle(}\L\setminus B_{c_2 R^{p}}{\textstyle)}}^2
\leq C (R^{p} \log(R^{p}))^{-2} <C R^{-2}.
\end{align*}
The existence of $\uac_R$ and \eqref{eq:dis:ac:rate} hence follow by choosing
$R_0$ such that $C R_0^{-2} \leq \eta$.
\end{proof}

\appendix

\section{Continuum Elasticity}
\label{sec:elast_intro}
%

\subsection{Cauchy--Born model}
\label{sec:elast:cb}
Consider a Bravais lattice $\mA \Z^d$ with site potential $V :
(\R^m)^{\Rghom} \to \R \cup \{+\infty\}$. Consider the homogeneous
continuous displacement field $u : \R^d \to \R^m$, $u(x) = \mF x$ for
some $\mF \in \R^{m \times d}$. Then interpreting $u$ as an atomistic
configuration, the {\em energy per unit undeformed volume} in the
deformed configuration $u$ is
\begin{displaymath}
  W(\mF) := V(\mF \cdot \Rghom) / \det\mA.
\end{displaymath}
If $u, \up : \R^d \to \R^m$ are both ``smooth'' (i.e.,
$|\D^2 u(x)|, |\D^2 \up(x)| \ll 1$), then
\begin{displaymath}
  \int_{\R^d} \B( W(\D u) - W(\D\up) \B) \dx
\end{displaymath}
is a good approximation to the atomistic energy-difference $\sum_{\ell
  \in \mA\Z^d} V(Du(\ell)) - V(D\up(\ell))$.

The potential $W : \R^{m \times d} \to \R \cup \{+\infty\}$ is called the
Cauchy--Born strain energy function. Detailed analyses of the Cauchy--Born model
are presented in \cite{BLBL:arma2002, E:2007a, OrtnerTheil2012}. In these
references it is shown that both the Cauchy--Born energy and its first variation
are second-order consistent with atomistic model, and resulting error estimates
are derived.

\subsection{Linearised elasticity}
\label{sec:elast:lin}
A continuum linear elasticity model that is consistent with the
atomistic description can be obtained by expanding the Cauchy--Born
strain energy function $W$ to second order: 
%
\begin{displaymath}
  W(\mG) \sim W(\bm0) + \partial_{\mF_{i\alpha}} W(\bm0)
  \mG_{i\alpha} +
  \smfrac12 \partial_{\mF_{i\alpha}\mF_{j\beta}} W(\bm0) \mG_{i\alpha}\mG_{j\beta},
\end{displaymath}
where we employed summation convention.

Let $\bbC_{i\alpha}^{j\beta} := \partial_{\mF_{i\alpha}\mF_{j\beta}} W(\bm0)$,
then employing cancellation of the linear terms, we obtain the linearised
energy-difference functional
\begin{displaymath}
  \frac{1}{2\det\mA} \int_{\R^d} \sum_{\rho,\vsig\in\Rghom}
  V_{,\rho\vsig}(\bm0) : \D_\rho u \otimes \D_\vsig u \dx
  = \frac12 \int_{\R^d}  \bbC_{i\alpha}^{j\beta} \partial_{x_\alpha}
  u_i \partial_{x_\beta} u_j \dx,
\end{displaymath}
and the associated equilibrium equation is
\begin{displaymath}
  \frac{1}{\det\mA}\sum_{\rho,\vsig\in\Rghom}
  V_{,\rho\vsig}(\bm0) \D_\rho \D_\vsig u =
\bbC_{i\alpha}^{j\beta} \frac{\partial^2 u_i}{\partial
    x_\alpha \partial x_\beta} = 0 \qquad \text{for } i = 1, \dots, m.
\end{displaymath}
(This equation becomes non-trivial when supplied with boundary
conditions or an external potential, either or both arising from the
presence of a defect.)

If the lattice $\mA \Z^d$ is stable in the sense that, for some
$\gamma > 0$,
\begin{displaymath}
  \sum_{\ell \in \mA\Z^d} \b\< \ddel V(\bm0) Dv(\ell), Dv(\ell)
  \b\> \geq \gamma \| \D v \|_{L^2}^2
\end{displaymath}
(cf. \eqref{eq:pt:strongstabeq}, \eqref{eq:reg:lgf:stab_H}) then the
tensor $\bbC$ satisfies the Legendre--Hadamard condition and hence the
linear elasticity equations are well-posed in a suitable function
space setting \cite{Wallace98, Hudson:stab}.

Finally, we remark that, the linear elasticity model can also be
obtained by first deriving a quadratic expansion of the atomistic
energy and then taking the long-wavelength limit (continuum
limit). This yields the relationship between the continuum Green's
function and the lattice Green's function exploited in the proof of
Lemma \ref{th:green_fcn}.

\section{Remarks}

\subsection{Cutoff in reference versus deformed configuration}
\label{sec:app_cutoff_ref}
We briefly show why one may always choose a cut-off in reference
configuration. We focus on the simpler point defect case with
$m = d \in \{2,3\}$, but with minor modifications the argument applies also to
the dislocation case (cf. \S~\ref{sec:dis:atm}).

Suppose that, instead, we choose a cut-off in deformed configuration. The site
energy is now a function of all differences
\begin{displaymath}
  V_\ell(Du(\ell)) = V\b( (D_\rho u(\ell))_{\rho \in
    (\L-\ell)\setminus \{0\}} \b),
\end{displaymath}
but $V_\ell$ effectively only depends on those $D_\rho u(\ell)$ for which
$|\rho+D_\rho u(\ell)| < r_{\rm def}$. Using ideas and notation from
\cite{OrtnerTheil2012} it is possible to generalise the definition of the total
energy $\E(u) = \sum_{\ell\in \L} V_\ell(Du(\ell))$ to this case.

Suppose now that $\ua \in \arg\min \{ \E(u) \sep u \in \UsH\}$. Since $\D \ua$
is piecewise constant and belongs to $L^2$ it follows that $|\D \ua(x)| \to 0$
uniformly as $|x| \to \infty$, and in particular, $\ua$ belongs to the space
\begin{displaymath}
  \Adm := \b\{ v \in \cap \Us^{1,2} \bsep \| \D v \|_{L^\infty} < m_{\Adm} \text{ and
    } |\D v(x)| < 1/2 \text{ for
  } |x| > r_{\Adm} \b\},
\end{displaymath}
provided that $m_{\Adm}, r_{\Adm}$ are chosen sufficiently
large. Moreover, possibly upon enlarging $m_{\Adm}, r_{\Adm}$, all
displacements $u \in \UsH$ with $\| \D u - \D \ua \|_{L^2} \leq 1/2$
belong to $\Adm$ as well. Since our approximation error analysis only
employs local arguments, it is therefore sufficient to define
$V_\ell(Du(\ell))$ for $u \in \Adm$ only.

We now show that a finite interaction range in deformed configuration
gives rise to a finite interaction range in reference configuration,
for displacements from $\Adm$. Let $u \in \Adm$ and $y = x = u$, then
we can estimate
\begin{displaymath}
  |y(\ell+\rho) - y(\ell)| \geq |\rho| - \int_0^1 \b| \D_\rho
  u(\ell+t\rho) \b| \dt
  \geq |\rho| \bg( 1 - \int_0^1 |\D u(\ell+t\rho)| \dt \bg).
\end{displaymath}
Since the bound $|\D u| < 1 / 2$ is violated at most on a segment of
length $2 r_{\Adm}$ it follows that
\begin{displaymath}
  |y(\ell+\rho) - y(\ell)| \geq |\rho| \bg(1 - m_{\Adm} \frac{2
    r_{\Adm}}{|\rho|} - \frac{1}{2} \frac{|\rho| - 2 r_{\Adm}}{|\rho|} \bg) \geq \frac{|\rho|}{4},
\end{displaymath}
for all sufficiently large $|\rho|$, and in particular, $|y(\ell+\rho)
- y(\ell)| \geq r_{\rm def}$ for all sufficiently large $|\rho|$; say,
$|\rho| > r_{\rm ref}$.

Thus, we conclude that, for $u \in \Adm$, $V_\ell(Du(\ell))$ depends
effectively only on $(D_\rho u(\ell))_{|\rho| < r_{\rm ref}}$.

\subsection{Far-field stability}
\label{sec:prf_far_field_stab}
In this appendix, we prove the claim made in
\S~\ref{sec:pt:regularity} that {\em strong stability of an
  equilibrium \eqref{eq:pt:strongstabeq} implies strong stability of
  the homogeneous lattice \eqref{eq:pt:stab_lattice}.} More generally
we establish that, if \eqref{eq:pt:strongstabeq} holds for {\em any}
$u \in \UsH$, then \eqref{eq:pt:stab_lattice} holds as well.

We pick a test function on the homogeneous lattice $v \in
\Usz(\mA\Z^d)$ with support contained in $B_{s}$. Next, we take a
sequence $\ell_n \in \L, |\ell_n| \to \infty$ and define shifted test
functions on $\L$, $v^{(n)} \in \Usz(\L)$, via $v^{(n)}(\ell) :=
v(\ell - \ell_n)$ for $\ell \in \L \cap B_{r_1}(\ell_n)$ and
$v^{(n)}(\ell) = 0$ otherwise, which is well-defined provided that
$|\ell_n|$ is sufficiently large (without loss of generality).

Since $\D u \in L^2$, $\|Du(\ell)\|_{\ell^\infty(B_s(\ell_n))} \to 0$
as $n \to \infty$, which readily implies that, for all $\eta \in
\mA\Z^d \cap B_s(0)$, $\ddel V_{\ell_n+\eta}(Du(\ell_n+\eta)) \to
\ddel V(\bfO)$ as $n \to \infty$. Thus,
\begin{align*}
  \< \ddel \E(u) v^{(n)}, v^{(n)} \> &= \sum_{\ell \in \L \cap
    B_{s}(\ell_n)} \< \ddel V_{\ell}(Du(\ell)) D v^{(n)}(\ell), D
  v^{(n)}(\ell) \b\>
  \\
  &= \sum_{\eta \in \mA \Z^d \cap B_{s}(0)} \b\< \ddel
  V(Du(\ell_n+\eta)) D v(\eta),
  D v(\eta) \b\> \\
  &\overset{n \to \infty}{\longrightarrow} \sum_{\eta \in \mA \Z^d
    \cap B_{s}} \b\< \ddel V(\bfO) D v(\eta), D v(\eta) \b\> = \<
  \ddel \E_\mA(0) v, v \>.
\end{align*}
Hence, the result follows.

\section{List of Symbols}
\label{sec:symbols}
\begin{itemize}
\item $\mA \Z^d$: homogeneous reference lattice; $\L$: defective reference
  lattice (point defects) or $\L = \mA \Z^d$ (dislocations);
  p. \pageref{sym:lattice}
\item $\ell, k$: lattice sites; $\rho,\vsig,\tau$: lattice directions
\item $\Rcore$: defect core radius (point defects); p. \pageref{sym:Rcore}
\item $\T_\L$: auxiliary triangulation of reference domain $\L$;
  p. \pageref{sym:T-L}
\item $\Usz, \UsH$: discrete function spaces; p. \pageref{eq:pt:defnUszUsH}
\item $\Rg_\ell, \Rg$: interaction ranges; $\rcut$: interaction radius;
  p. \pageref{sym:Rg-rcut}
\item $V, V_\ell$: site energy potential; p. \pageref{sym:V}
\item $\E$: energy-difference functional; p. \pageref{sym:E} for point defects
  and p. \pageref{eq:disl:Ediff} for dislocations
\item $W$: Cauchy--Born strain energy potential; p. \pageref{sec:elast:cb}
\item $\burg$: Burgers vector; $\burg_{12} = (\burg_1, \burg_2)$: in-plane
  component; p. \pageref{sym:burg}
\item $\hat{x}, \rdisl$: position and radius of dislocation core; $\Gamma$:
  branch-cut, or slip-half-plane; p. \pageref{sym:xhat-Gamma}; $\Omega_\Gamma$:
  right half-space; p. \pageref{eq:disl:defn_halfspace}
\item $S_0$: slip operators for total displacements; $S$: slip operator for
  relative displacements; $R$: dual slip operator; p. \pageref{eq:disl:defn_Sop}
  and \pageref{sym:S-R}
\item $\ulin$: linear elasticity solution for a dislocation; $\up$: predictor
  displacement for a dislocation; p. \pageref{eq:disl:linel_pde} and
  p. \pageref{eq:disl:defn_u0}
\item $\Adm$: admissible set for dislocation problem;
  p. \pageref{eq:disl:defn_Adm}
\item $\Gamma_S, \E_S$: ``reflected'' dislocation geometry and energy difference
  functional; p. \ref{rem:reflection}.
\item $e$: elastic strain of predictor $\up$; $\Del$: elastic gradient operator;
  p. \pageref{eq:defn_elastic_strain}.
\end{itemize}

\bibliographystyle{plain}
\bibliography{qc}
\end{document}